\documentclass[10pt, oneside]{article}
\usepackage{mathrsfs}
\usepackage{amsmath,amsfonts,amssymb,amsthm}
\allowdisplaybreaks
\usepackage{mathtools}
\usepackage{caption}
\usepackage{subfigure}
\usepackage{cite}
\usepackage{graphicx}
\usepackage{float}
\usepackage{paralist}
\usepackage{indentfirst}
\usepackage{authblk}
\usepackage{tikz}
\usepackage{tikz-cd}
\usetikzlibrary{shapes, arrows, positioning}
\usepackage[colorlinks,
linkcolor=red,
citecolor=blue
]{hyperref}
\usepackage{color} 
\usepackage{extarrows}
\usepackage{dsfont}
\usepackage[T1]{fontenc}

\newtheorem{theorem}{Theorem}[section]
\newtheorem{lemma}[theorem]{Lemma}

\newtheorem{prop}{Proposition}[section]
\newtheorem{remark}{Remark}[section]

\numberwithin{equation}{section}

\usepackage[textheight=220truemm, textwidth=160truemm]{geometry}   
\usepackage[title,titletoc]{appendix}

\usepackage{titlesec}

\begin{document}

\title{{\LARGE \textbf{Global regularity and sharp decay rates to the 1D hypo-viscous compressible Navier-Stokes equations}}}

\author{Chen Liang, Zhaonan Luo, and Zhaoyang Yin}
\date{}

\maketitle

\renewcommand*{\Affilfont}{\small\it}

\begin{abstract}
In this paper, we study the global regularity and sharp decay rates for the isentropic hypo-viscous compressible Navier-Stokes equations in 1D. Firstly, we prove the global stability for the small initial data near a stable equilibrium. Especially, we establish the global critical regularity in the Sobolev space $H^{\beta}$ with $\frac{1}{2}<\beta<1$. Furthermore, by bootstrap argument, Fourier splitting method and energy method, we then establish the optimal time decay rates under the extra low-frequency smallness assumption. We find the $L^2$ energy is self-closed, which motivates us to obtain the existence of global large solutions for initial data with high regularity. By a pure energy method, we also derive the optimal time decay rates when $\frac{1}{2}\le\beta<\frac{3}{4}$. We find a phenomenon that $\|(a,u)\|_{L^2}$ still decays even if the initial data does not possess $L^2$ smallness. Notably, the low-frequency smallness assumption is removed in the case with $\frac{1}{2}\le\beta<\frac{3}{4}$.

\renewcommand{\thefootnote}{}

\footnotemark\footnotetext{\textbf{2020 MSC.} 35Q30; 35Q35; 35B40}
\footnotetext{\textbf{Keywords.} Compressible Navier-Stokes equations; Global critical regularity; Fourier splitting method; Optimal time decay rate; Global large solution; Energy method; Low-frequency smallness assumption }
\end{abstract}

\tableofcontents

\section{Introduction}\label{section1}

\subsection{Problem formulation}
In this paper, we consider the following compressible Navier-Stokes (CNS) equations on the $\mathbb{R}$:
\begin{align}\label{eq0}
\left\{\begin{aligned} 
&\partial_{t}\rho+(\rho u)_{x}=0,\\ 
&\partial_{t}(\rho u)+\mu(-\Delta)^{\beta}u+(\rho u^{2}+P(\rho))_{x}=0,\\
&\rho|_{t=0}=\rho_0,~~u|_{t=0}=u_0,
\end{aligned}\right.
\end{align}
where $\rho$ and $u$ are density and velocity, the pressure $P=\rho^{\gamma}\ (\gamma>1)$ is given a smooth function, $\mu>0$ is the coefficient of viscosity.
  Fractional Laplacian operator $(-\Delta)^{\beta}u$ is defined by the Fourier transform
	$$
	\widehat{(-\Delta)^{\beta}u}=|\xi|^{2\beta}\widehat{u},\quad\quad \widehat{u}(\xi,t)=\int_{\mathbb{R}}e^{-ix\cdot\xi}u(x,t) dx.
	$$
Fractional diffusion can describe behaviors such as jump L\'evy processes\cite{MR3185174}, radiation dynamics\cite{MR1031939}, and porous media mechanics\cite{MR2737788,MR2954615}.
Taking $\mu=1$ and $\rho=a+1$, (\ref{eq0}) can be written as
\begin{align}\label{eq1}
\left\{\begin{aligned} 
&\partial_{t}a+u_x=-(au)_x,\\ 
&\partial_{t}u+(-\Delta)^{\beta}u+\gamma a_x=K(a)a_x+G,\\
&a|_{t=0}=a_0,~~u|_{t=0}=u_0,
\end{aligned}\right.
\end{align}
where $$
K(a)=\gamma\frac{a}{1+a}+\frac{P^{\prime}(1)- P^{\prime}(1+a)}{1+a}, \\
$$
and
$$
G=-uu_x+\frac{a}{1+a}(-\Delta)^{\beta}u.
$$
The CNS equations with $\beta=1$ is the classical barotropic CNS equations. There are many results about classical CNS equations in multi-dimensions. Matsumura and Nishida \cite{1979Matsumura,1980Matsumura} established the global existence of small solutions and derived the corresponding time decay rate under an additional $L^1$ condition on the initial data. Further global well-posedness results for strong solutions are available in \cite{MR1779621,MR2675485,MR2679372,MR2847531}.  For the full CNS system without heat conduction,
 Z. Xin \cite{xin1988blowup} proved the non-existence of global non-trivial smooth solutions with compactly supported initial density. For the weak solutions of CNS equations, P. L. Lions \cite{1996Lions,1998Lions} obtained global finite-energy weak solutions. The related result was
later extended by  Feireisl et al.\cite{2004Feireisl}.

Impressive progress has been made in the time decay rates of multi-dimensional isentropic CNS equations with $\beta=1$. Under the assumption that the positive density tends to a positive constant (taken as 1 for simplicity) at infinity.  Ponce \cite{PONCE1985399} established the $L^{p}$ time decay rate with small initial data in $H^{s}\cap W^{s,1}(s>2+\frac{d}{2})$. Y. Guo and Y. Wang \cite{Guo01012012} obtained the optimal time decay rates for the CNS equations and the Boltzmann equation by a method purely based on energy estimates. R. Danchin and J. Xu \cite{MR3609245} proved the  time decay rates in the critical Besov space. Later, J. Xu \cite{Xu2019} established sharp time decay rates under a small low frequency assumption in some Besov spaces with negative index, which improved the results from \cite{MR3609245}. The restriction on the smallness of low frequency was removed by Z. Xin and J. Xu \cite{MR4188989}. Spectral analysis is an important method to analysis the large-time behavior. One may refer to \cite{Li2011Large,DUAN2007220,1979Matsumura} for more details.  

We now review some results about 1D CNS equations with $\beta=1$. Early mathematical results for 1D CNS equations can be found in  \cite{MR283426,MR421305,DavidHoff,Kanel1979,Kazhikhov1979,KAZHIKHOV1977273}. A. Mellet and A. Vasseur \cite{MR2368905} proved there exists a unique global strong solution in Sobolev space $H^1$. J. Li \cite{18M1167905} showed the global well-posedness for CNS equations with constant heat conductivity. Moreover, the results remain valid even in the presence of vacuum. The global well-posedness for entropy-bounded solution was also considered by J. Li and Z. Xin \cite{MR4039142,MR4491875}. K. Chen, L. K. Ha, R. Hu, and Q.-H. Nguyen \cite{MR4659290} proved the global well-posedness for rough initial data. Regarding large-time behavior, the large-time behavior for 1D viscous polytropic ideal gas has been studied in \cite{MR971685,MR1370096,MR1671920}. J. Li and Z. Liang \cite{Li2016Some} proved the solutions of CNS equations tend to 0 in $L^p$ norm. However, they did not provide an explicit decay rate. Subsequently, Y. Chen, M. Li, Q. Yao and Z. Yao  \cite{MR4549954} obtained the optimal time decay rates in $H^2$ by virtue of spectral analysis and Fourier splitting method. For more results about time decay rates for 1D CNS equations, we refer the reader to \cite{MR1890882,MR2812966}.

When $0<\beta<1$, Y. Li, P. Qu, Z. Zeng and D. Zhang \cite{li2022} proved there exist infinitely many weak solutions for hypo-viscous CNS equations. S. Wang and S. Zhang \cite{MR4643428} studied the $L^2$ time decay rate of (\ref{eq0}) with the case $\frac{1}{2}<\beta\le1$ in $\mathbb{R}^3$, where the initial data belongs to $H^{4}\cap L^{1}$. Moreover, S. Wang and S. Zhang \cite{MR4897629} obtained the time decay rate for the 3D Navier-Stokes-Poisson system with fractional dissipation, which can be regarded as an extension from  \cite{MR4643428}. Very recently, C. Liang, Z. Luo and Z. Yin \cite{liang2026} established the global regularity and optimal time decay rates for 2D CNS equations with $\frac{1}{2}\le\beta<1$.

To our best knowledge, the global regularity and large time behavior for CNS equations  (\ref{eq0}) with $\frac{1}{2}\le\beta<1$ has not been
studied yet. In this paper, we mainly investigate the global stability and optimal time decay rates of global strong solutions for system (\ref{eq0}). Unlike the higher-dimensional cases, the one-dimensional setting exhibits its own inherent difficulties, which will be addressed in detail later.

\subsection{Notations and main results}

In this paper, we denote $\mathscr{F}^{-1}(f)$ the inverse Fourier transform of $f$ and $\Lambda f=(-\Delta)^{\frac{1}{2}}f$. We define the following energy and dissipation functionals for $(a,u)$:
\begin{align*}
E_{0}(t)&=\int_{\mathbb{R}}\left(2G(\rho)+\rho u^2\right)dx,\quad E_{s}(t)=\left\|(\sqrt{\gamma} a, u)\right\|_{\dot{H}^{s}}^{2},\\ 
\mathscr{E}_{0}(t)&=E_{0}+E_{s}+\langle u,2k\Lambda^{-2+2 \beta} a_x\rangle_{H^{s-2\beta+1}},\\
\mathscr{D}_{0}(t)&=k\gamma\|\Lambda^{\beta}a\|_{H^{s-2\beta+1}}^{2}+\|\Lambda^{\beta} u\|_{H^{s}}^{2},\\
\mathscr{E}_{1}(t)&= E_{1}+E_{1+s}+\langle\Lambda^{\beta}a_x,2k\Lambda^{\beta}u\rangle_{H^{s-2\beta+1}},\\
\mathscr{D}_{1}(t)&=k\gamma\|\Lambda^{1+\beta}a\|_{H^{s-2\beta+1}}^{2}+\|\Lambda^{1+\beta} u\|_{H^{s}}^{2},
\end{align*} 
where $G(\rho)$ is the following potential energy density:
\begin{align*}
G(\rho)\overset{\mathrm{def}}{\operatorname*{=}}\rho\int_{1}^{\rho}\frac{s^{\gamma}-1}{s^2}ds.
\end{align*}
and $k>0$ is a sufficiently small constant.

Our main results are as follows:
\begin{theorem}\label{1theo1}
Let $\frac{1}{2}<\beta<1$, $s\ge\beta$ or $\beta=\frac{1}{2}$, $s>\frac{1}{2}$. Let $(a,u)$ be a local strong solution of (\ref{eq1}) with the 
initial data $(a_{0},u_{0})\in H^{s}$. There exists a small constant $\delta $ such that if 
\begin{align*}
\|(a_{0},u_{0})\|_{H^{s}}\leq\delta,
\end{align*}
then the system (\ref{eq1}) admits a unique global strong solution $(a,u)\in C([0,\infty),H^{s})$. Moreover, we obtain that for all $t>0$, there holds
\begin{align}\label{1201}
\quad\frac{d}{dt}\mathscr{E}_{0}+\mathscr{D}_{0}\leq 0.
\end{align}
\end{theorem}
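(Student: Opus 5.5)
The plan is the standard continuity (bootstrap) argument built on the Lyapunov inequality (\ref{1201}). Assume the local solution satisfies, on $[0,T]$, the a priori bound $\sup_{t\le T}\|(a,u)(t)\|_{H^s}\le 2C_0\delta$; in particular $\|a\|_{L^\infty}\le C\|a\|_{H^s}\ll1$ since $s>\frac12$ (for $s=\beta>\frac12$ this is still the 1D embedding $H^s\hookrightarrow L^\infty$). Hence $\rho=1+a$ stays near $1$, $K(a)$ and $\frac{a}{1+a}$ are smooth small functions of $a$, and $E_0$ is equivalent to $\|(a,u)\|_{L^2}^2$ because $G(\rho)\sim\frac{\gamma}{2}a^2$ near $\rho=1$. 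Choosing $k$ small, the cross term $\langle u,2k\Lambda^{-2+2\beta}a_x\rangle_{H^{s-2\beta+1}}$ is bounded by $Ck\|u\|_{H^{s}}\|a\|_{H^{s}}$ (the multiplier $\Lambda^{-2+2\beta}\partial_x$ has order $2\beta-1\ge0$, and the $H^{s-2\beta+1}$ weight uses up exactly this order). Thus $\mathscr{E}_0\simeq\|(a,u)\|_{H^s}^2$, and (\ref{1201}) gives $\|(a,u)(t)\|_{H^s}^2\le C\|(a_0,u_0)\|_{H^s}^2$, which closes the bootstrap with $C_0^2=C$; the local theory and uniqueness then extend the solution globally in $C([0,\infty),H^s)$.

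To derive (\ref{1201}) I would proceed in three steps. First, the basic energy identity for (\ref{eq0}): multiplying the momentum equation by $u$ and using the mass equation gives exactly $\frac{d}{dt}E_0+2\|\Lambda^\beta u\|_{L^2}^2=0$ with no nonlinear remainder, since $G$ is the relative potential energy. Second, apply $\Lambda^s$ to (\ref{eq1}) and pair with $(\gamma\Lambda^s a,\Lambda^s u)$; the linear coupling $\gamma\langle\Lambda^s u_x,\Lambda^s a\rangle+\gamma\langle\Lambda^s a_x,\Lambda^s u\rangle$ cancels, leaving $\frac{d}{dt}E_s+2\|\Lambda^{s+\beta}u\|_{L^2}^2=N_s$. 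Third, to get dissipation for $a$, differentiate the cross term: pairing the $u$-equation with $\Lambda^{-2+2\beta}a_x$ and the $a$-equation with $\Lambda^{-2+2\beta}u_x$ in $H^{s-2\beta+1}$ produces $k\gamma\|\Lambda^\beta a\|_{H^{s-2\beta+1}}^2$ from $\gamma a_x$. The remaining linear terms $\langle\Lambda^{2\beta}u,\Lambda^{-2+2\beta}a_x\rangle$ and $\|\Lambda^{-1+\beta}u_x\|^2$ are controlled by $\|\Lambda^\beta u\|_{H^s}^2$ plus $\frac{k\gamma}{2}\|\Lambda^\beta a\|^2_{H^{s-2\beta+1}}$; this uses $s-2\beta+1+2\beta-1=s$ and $\beta\ge\frac12$, and the smallness of $k$ absorbs them. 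Summing gives $\frac{d}{dt}\mathscr{E}_0+\frac32\mathscr{D}_0\le|N|$.

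The main obstacle is bounding the nonlinear terms by $C\|(a,u)\|_{H^s}\mathscr{D}_0$, since $\mathscr D_0$ controls $a$ only in $\dot H^\beta\cap\dot H^{s-\beta+1}$ and never $a$ in $L^2$ or $u$ at low frequency beyond $\dot H^\beta$. The critical term is the transport term $\langle\Lambda^s(u a_x),\Lambda^s a\rangle$, which loses a derivative on $a$. I would handle it with the Kato–Ponce commutator: after integrating by parts, $\langle u\partial_x\Lambda^s a,\Lambda^s a\rangle=-\frac12\langle u_x,|\Lambda^s a|^2\rangle$, and the commutator $[\Lambda^s,u]a_x$ is bounded by $\|u_x\|_{L^\infty}\|\Lambda^s a\|+\|\Lambda^s u\|\,\|a_x\|_{L^\infty}$-type products. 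These $L^\infty$ norms must be replaced by fractional Sobolev norms in $\mathscr D_0$, e.g. $\|u_x\|_{L^\infty}\lesssim\|\Lambda^{\beta}u\|_{H^s}$ when $s+\beta>\frac32$. This is the point where $s\ge\beta>\frac12$ (respectively $s>\frac12$ when $\beta=\frac12$) enters, and the interpolation indices have to be chosen carefully. Similarly, the term $\frac{a}{1+a}\Lambda^{2\beta}u$ is split as $\langle\Lambda^{s-\beta}(\frac{a}{1+a}\Lambda^{2\beta}u),\Lambda^{s+\beta}u\rangle$, using fractional Leibniz and composition estimates. The term $K(a)a_x$ is paired against $\Lambda^{2s}u$ with derivatives shared so that each factor of $a$ carries at least $\beta$ derivatives. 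Once every term is shown to satisfy $|N|\le C\|(a,u)\|_{H^s}\mathscr{D}_0$ (possibly plus $C\|(a,u)\|_{H^s}^2\mathscr D_0$), smallness $\|(a,u)\|_{H^s}\le 2C_0\delta$ gives $|N|\le\frac12\mathscr D_0$, which yields (\ref{1201}).
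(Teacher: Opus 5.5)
Your architecture is the same as the paper's: the exact energy identity (\ref{303}), the $\dot H^{s}$ estimate with cancellation of the linear coupling, the cross term $\langle u,2k\Lambda^{-2+2\beta}a_x\rangle_{H^{s-2\beta+1}}$, and a continuity argument. For $s=\beta$ the cross term is exactly the sum of the paper's (\ref{306}) and (\ref{307}), because $\Lambda^{2(1-\beta)}\Lambda^{2\beta-2}=\mathrm{Id}$. Your linear bookkeeping and the equivalence $\mathscr{E}_0\simeq\|(a,u)\|_{H^s}^2$ are correct.

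The gap is in the step you yourself flag as the main obstacle. The mechanism you sketch for the transport term does not close in the critical case $s=\beta$, which is the main content of the theorem. At $s=\beta$ you control only:
\begin{itemize}
\item $a,u\in H^\beta$ (energy);
\item $a\in\dot H^\beta\cap\dot H^1$ and $u\in\dot H^\beta\cap\dot H^{2\beta}$ (dissipation).
\end{itemize}
A Kato--Ponce bound of the form $\|u_x\|_{L^\infty}\|\Lambda^\beta a\|_{L^2}+\|\Lambda^\beta u\|_{L^2}\|a_x\|_{L^\infty}$ then fails for two reasons. First, $a_x\in L^\infty$ requires $a$ at the level of $\dot H^{3/2}$, and nothing controls that. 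Second, $u_x\in L^\infty$ requires $2\beta>\frac32$, and your own condition $s+\beta>\frac32$ shows this fails for $\beta\le\frac34$. So the instruction ``replace the $L^\infty$ norms by fractional Sobolev norms in $\mathscr{D}_0$'' cannot be carried out here. As written, your argument only covers $s$ well above the critical index.

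The missing idea is to use $L^p$-based estimates that never put more than one derivative on $a$ in the $L^2$ scale. The paper splits
\[
\Lambda^\beta(au)_x=[\Lambda^\beta,u\partial_x]a+u\Lambda^\beta a_x+\Lambda^\beta(au_x)
\]
and uses the commutator estimate of Lemma \ref{lemma6}, valid for $0<\beta<1$. That estimate places only $\Lambda^\beta$ on $a$ and only $\partial_x$ on $u$. Take $p_1=\frac{1}{\beta-1/2}$ and $p_2=\frac{1}{1-\beta}$, and use the 1D embeddings $\dot H^{1}\hookrightarrow\dot W^{\beta,p_1}$ and $\dot H^{\beta+\frac12}\hookrightarrow\dot W^{1,p_2}$. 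Then both the commutator and $\frac12\langle u_x,|\Lambda^\beta a|^2\rangle$ are bounded by
\[
C\|\Lambda a\|_{L^2}\|\Lambda^{\beta+\frac12}u\|_{L^2}\|\Lambda^\beta a\|_{L^2}\le C\delta\mathscr{D}_0 .
\]
The inequality $\beta\le\beta+\frac12\le2\beta$ is exactly where $\beta\ge\frac12$ enters.

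The last piece is rewritten as $\langle\Lambda^{2\beta-1}(au_x),\Lambda a\rangle$ and handled by Lemma \ref{lemma5} with the same exponents. This gives $\|a\|_{L^\infty}\|\Lambda^{2\beta}u\|_{L^2}+\|u_x\|_{L^{p_2}}\|\Lambda^{2\beta-1}a\|_{L^{p_1}}$, with $\|\Lambda^{2\beta-1}a\|_{L^{p_1}}\lesssim\|\Lambda^\beta a\|_{L^2}$. Low-order factors in the cross-term nonlinearities are taken from the energy in the same way, e.g. $\|(a,u)\|_{L^{1/(1-\beta)}}\lesssim\|\Lambda^{\beta-\frac12}(a,u)\|_{L^2}$.

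The guiding principle is that any $L^\infty$ factor must fall on $a$ or $u$ themselves, never on $a_x$. With these choices your target bound $|N|\le C\|(a,u)\|_{H^\beta}\mathscr{D}_0$ does hold, and the rest of your argument goes through.
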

\begin{remark}
It should be noticed that there exists parabolic effects for $a$ and $u$. The phenomenon is unique to the case with $\frac{1}{2}\le\beta<1$.
\end{remark}
\begin{theorem}\label{1theo2}
Let $\frac{1}{2}<\beta<1$, $s\ge\beta$ or $\beta=\frac{1}{2}$, $s>\frac{1}{2}$. Let $(a,u)$ be a strong solution of (\ref{eq1}) with the initial data  $(a_{0}, u_{0})$ under the condition in Theorem \ref{1theo1}. Suppose $\left(a_{0}, u_{0}\right) \in \dot{B}_{2, \infty}^{-\frac{1}{2}}$ and there exists a small constant $\delta $ such that if 
  \begin{align*}
 \|(a_{0}, u_{0})\|_{H^{s}}+\|(a_{0}, u_{0})\|_{\dot{B}^{-\frac{1}{2}}_{2,\infty}}\le\delta,
  \end{align*}
  then there exists $C>0$ such that for every $t>0$, there holds
\begin{align*}
\|\Lambda^{s_1}(a,u)\|_{L^{2}}\le C(1+t)^{-\frac{1+2s_1}{4\beta}},
\end{align*}
where $0\le s_1\le s$. If there exists a small constant $\delta $ such that 
  \begin{align*}
 \|(a_{0}, u_{0})\|_{H^{s+\beta}}+\|(a_{0}, u_{0})\|_{\dot{B}^{-\frac{1}{2}}_{2,\infty}}\le\delta,
  \end{align*} 
 and additionally $0<\left|\int_{\mathbb{R}}\left(a_{0}, u_{0}\right) d x\right|$, then there exists $C_{\beta} \leq C $ such that
\begin{align*}
\left\|\Lambda^{s_1}(a, u)\right\|_{L^{2}} \geq \frac{C_{\beta}}{2}(1+t)^{-\frac{1+2s_1}{4\beta}},
\end{align*}
where $0\le s_1\le s$.
\end{theorem}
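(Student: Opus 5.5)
The proof has three parts: a uniform-in-time bound in $\dot B^{-1/2}_{2,\infty}$, an upper decay rate from Fourier splitting applied to the Lyapunov inequality (\ref{1201}), and a lower bound from the linearized Green's function.

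\textbf{Step 1: propagation of the negative Besov norm.} Apply $\dot\Delta_j$ to (\ref{eq1}) and form the localized energy $\|\dot\Delta_j(a,u)\|_{L^2}^2$ plus a small cross term $k\langle \dot\Delta_j u, \Lambda^{-2+2\beta}\partial_x\dot\Delta_j a\rangle$, mirroring the construction of $\mathscr{E}_0$. This gives, for the localized energy,
\begin{align*}
\frac{d}{dt}\|\dot\Delta_j(a,u)\|_{L^2}^2+c2^{2\beta j}\min(1,2^{2(1-2\beta)j})\|\dot\Delta_j(a,u)\|_{L^2}^2\lesssim \|\dot\Delta_j N\|_{L^2}\|\dot\Delta_j(a,u)\|_{L^2},
\end{align*}
where $N$ collects the nonlinear terms. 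We multiply by $2^{-j}$, take the supremum over $j$ and integrate in time. The nonlinear terms $(au)_x$, $uu_x$, $K(a)a_x$ and $\frac{a}{1+a}\Lambda^{2\beta}u$ are bounded in $\dot B^{-1/2}_{2,\infty}$ by the 1D product law $\|fg\|_{\dot B^{-1/2}_{2,\infty}}\lesssim\|f\|_{L^2}\|g\|_{L^2}$, which follows from $L^1\hookrightarrow\dot B^{-1/2}_{2,\infty}$. The derivatives are placed so that one factor is controlled by $\mathscr{D}_0^{1/2}$. This yields
\begin{align*}
\|(a,u)(t)\|_{\dot B^{-1/2}_{2,\infty}}\lesssim\delta+\int_0^t \|(a,u)\|_{H^s}\,\mathscr{D}_0^{1/2}\,d\tau,
\end{align*}
which is closed by a bootstrap together with the decay proved in Step 2. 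The main obstacle is that $\int_0^t\mathscr{D}_0^{1/2}$ is not integrable from energy bounds alone, so the bootstrap must use the decay rate itself. The integrand is then bounded by $(1+\tau)^{-\frac{1}{4\beta}}(1+\tau)^{-\frac{3}{4\beta}}$-type products. For $\beta<1$, $\frac{1}{\beta}>1$, so this is integrable. The derivative on $a$ in $(au)_x$ is handled through the $\Lambda^{-1}$ gain at low frequency.

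\textbf{Step 2: upper decay rate via Fourier splitting.} On $S(t)=\{|\xi|^{2\beta}\le R(1+t)^{-1}\}$ we have $\mathscr{D}_0\ge \frac{cR}{1+t}\big(\mathscr{E}_0-\int_{S(t)}|\widehat{(a,u)}|^2d\xi\big)$, using that $\mathscr{E}_0\sim\|(a,u)\|_{H^s}^2$ for small data and $k$. The low-frequency part is bounded by $\|(a,u)\|_{\dot B^{-1/2}_{2,\infty}}^2\int_{|\xi|\le r}|\xi|\,d\xi$-type dyadic sums, which gives $\lesssim(1+t)^{-\frac{1}{2\beta}}$. Multiplying (\ref{1201}) by $(1+t)^m$ with $m$ large then yields $\mathscr{E}_0\lesssim(1+t)^{-\frac{1}{2\beta}}$. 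For $s_1>0$ we repeat the argument with $\mathscr{E}_1$, or with $\Lambda^{s_1}$-weighted energies derived analogously to (\ref{1201}); the low-frequency part now gives $(1+t)^{-\frac{1+2s_1}{2\beta}}$. Taking square roots gives the claim, and intermediate $s_1$ follow by interpolation.

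\textbf{Step 3: lower bound.} Write $(a,u)=e^{tL}(a_0,u_0)+\int_0^te^{(t-\tau)L}N\,d\tau$. Compute the linear symbol: the eigenvalues satisfy $\lambda^2+|\xi|^{2\beta}\lambda+\gamma\xi^2=0$. Since $2\beta>1$, at low frequency the roots are $\lambda_\pm\approx\pm i\sqrt\gamma|\xi|-\frac12|\xi|^{2\beta}$, so the modes are oscillatory with damping $e^{-\frac12|\xi|^{2\beta}t}$. Hence
\begin{align*}
\|\Lambda^{s_1}e^{tL}(a_0,u_0)\|_{L^2}\ge C_\beta\,(1+t)^{-\frac{1+2s_1}{4\beta}},
\end{align*}
obtained by restricting to $|\xi|\lesssim t^{-1/(2\beta)}$ and using $\widehat{(a_0,u_0)}(0)\ne0$ and continuity. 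The oscillating factors $\cos,\sin(\sqrt\gamma|\xi|t)$ average to a positive constant when combined in $|\hat a|^2+|\hat u|^2$. The Duhamel term is bounded by $C\delta(1+t)^{-\frac{1+2s_1}{4\beta}}$ using Step 2, the quadratic structure, and the $H^{s+\beta}$ assumption, which controls $\Lambda^{s_1}$ of the viscous nonlinearity. Choosing $\delta$ small relative to $C_\beta$ gives the bound $\frac{C_\beta}{2}(1+t)^{-\frac{1+2s_1}{4\beta}}$.
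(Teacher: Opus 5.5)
Your overall architecture for the upper bound matches the paper's: propagate the $\dot B^{-\frac12}_{2,\infty}$ norm, Fourier-split (\ref{1201}), and close both in one bootstrap. Bounding $\int_{S(t)}|\widehat{(a,u)}|^2d\xi$ directly by the propagated Besov norm is a legitimate shortcut. At the critical regularity $s=\beta$, however, two steps fail as written.

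In Step 1 you close the time integral with a pointwise rate $\mathscr{D}_0^{1/2}\lesssim(1+\tau)^{-\frac{3}{4\beta}}$. No such rate is available. $\mathscr{D}_0$ contains $\|\Lambda a\|_{L^2}^2$ and $\|\Lambda^{2\beta}u\|_{L^2}^2$, which for $H^{\beta}$ solutions are only time-integrable. Even its component $\|\Lambda^{\beta}u\|_{L^2}$ decays only like $(1+t)^{-\frac{1}{4\beta}-\frac12}$, which is slower than $(1+t)^{-\frac{3}{4\beta}}$ for $\beta<1$. What works is a time-weighted dissipation bound. Multiplying (\ref{1201}) by $(1+t)^{\sigma}$ and using the bootstrap rate of $\mathscr{E}_0$ gives $\int_0^t(1+\tau)^{\sigma}\mathscr{D}_0\,d\tau\lesssim1$ for $\sigma<\frac{1}{2\beta}$. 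Cauchy--Schwarz against $(1+\tau)^{-\frac{1}{2\beta}-\sigma}$ then needs $\sigma>1-\frac{1}{2\beta}$, and the two conditions are compatible precisely because $\beta<1$; this is (\ref{3102})--(\ref{312}).

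More seriously, Step 2 does not give the top-order rate. Fourier splitting of (\ref{1201}) only yields $\mathscr{E}_0\lesssim(1+t)^{-\frac{1}{2\beta}}$, i.e.\ the $L^2$ rate for the whole $H^{\beta}$ norm. $\mathscr{E}_1$ is not finite for general $H^{\beta}$ data. You also have no $\Lambda^{\beta}$-level analogue of (\ref{1201}) to split, because the nonlinear terms at that level (e.g.\ $\|\Lambda a\|_{L^2}\|\Lambda^{\beta}a\|_{L^2}\|\Lambda^{\beta+\frac12}u\|_{L^2}$ in (\ref{304})) are controlled only by the full $\mathscr{D}_0$, including its low-order part $\|\Lambda^{\beta}(a,u)\|_{L^2}^2$. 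The missing device is the paper's Step 3:
\begin{enumerate}
\item From $\mathscr{E}_0\lesssim(1+t)^{-\frac{1}{2\beta}}$, deduce $\frac{1}{1+t}\int_0^t(1+\tau)^{\frac{1}{2\beta}+1}\mathscr{D}_0\,d\tau\lesssim1$, which is (\ref{322}).
\item Multiply the top-order inequality (\ref{325}), which has no dissipation for $a$ at all, by $(1+t)^{\frac{1}{2\beta}+2}$.
\item Absorb the term produced by the weight using $\|\Lambda^{\beta}(a,u)\|_{L^2}^2\le C\mathscr{D}_0$.
\end{enumerate}

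For the lower bound your route differs from the paper's. There, the Fourier energy identity for (\ref{332}) together with $|\xi|^{2\beta}|\widehat{u}_L|^2\le|\xi|^{2\beta}(\gamma|\widehat{a}_L|^2+|\widehat{u}_L|^2)$ gives $\gamma|\widehat{a}_L|^2+|\widehat{u}_L|^2\ge e^{-2|\xi|^{2\beta}t}(\gamma|\widehat{a}_0|^2+|\widehat{u}_0|^2)$ pointwise in $\xi$. No eigenvalue asymptotics or averaging of oscillations is needed. The remainder $(a_N,u_N)$ is then treated by energy estimates plus Fourier splitting rather than by semigroup bounds. Your Duhamel version can be made to work, with one caveat. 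At high frequency the $a$-mode of $e^{tL}$ is damped only like $e^{-\gamma|\xi|^{2-2\beta}t}$, so the derivative in $(au)_x$ must be paid for with the extra $\beta$ derivatives from the $H^{s+\beta}$ assumption, not just the viscous term.

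The real problem is your last sentence: $\delta$ cannot be chosen small relative to $C_\beta$. Indeed $C_\beta\lesssim c_0=|\widehat{(a_0,u_0)}(0)|\lesssim\|(a_0,u_0)\|_{\dot B^{-\frac12}_{2,\infty}}\le\delta$, since for integrable $f$ one has $2^{-\frac{j}{2}}\|\dot\Delta_jf\|_{L^2}\to c|\widehat{f}(0)|$ as $j\to-\infty$. A remainder of size $C\delta(1+t)^{-\frac{1+2s_1}{4\beta}}$ can therefore never be beaten. You need the quadratic bound $C\delta^2$ together with a nondegeneracy condition such as $c_0\gg\delta^2$, or a finer low-frequency analysis of the remainder. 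The stated hypotheses provide neither. The paper's choice of constants leading to (\ref{349}) passes over the same point, so it offers no fix you could borrow.
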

\begin{remark}
  Note $L^1\hookrightarrow\dot{B}_{2, \infty}^{-\frac{1}{2}}$, it follows that the above results still hold true when $\left(a_{0}, u_{0}\right)\in L^1$ and
  \begin{align*}
  \|(a_0,u_0)\|_{L^1}\le\delta.
  \end{align*}
\end{remark}
\begin{remark}
By Sobolev interpolation, $L^{p}(p\ge2)$ time decay rates for $(a,u)$ may be obtained.
\end{remark}
\begin{theorem}\label{1theo3}
Let $\frac{1}{2}<\beta<\frac{3}{4}$, $s\ge1+\beta$ or $\beta=\frac{1}{2}$, $s>\frac{3}{2}$. Let $(a,u)$ be a local strong solution of (\ref{eq1}) with the 
initial data $(a_{0},u_{0})\in H^{s}$. For any positive constant $M$, there exists a small constant $\delta=\delta(M) $ such that if 
\begin{align*}
\|(a_{0},u_{0})\|_{L^{2}}\leq M,\quad\|\partial_{x}(a_{0},u_{0})\|_{H^{s-1}}\leq\delta(M),
\end{align*}
then the system (\ref{eq1}) admits a unique global strong solution $(a,u)\in C([0,\infty),H^{s})$. Moreover, we obtain that for all $t>0$, there holds
\begin{align}\label{1202}
\quad\frac{d}{dt}\mathscr{E}_{1}+\mathscr{D}_{1}\leq 0.
\end{align}
\end{theorem}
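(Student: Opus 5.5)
The plan is to combine a basic $L^2$ energy identity, which is uniform in size, with a higher-order Lyapunov functional $\mathscr{E}_1$ that involves only derivatives of $(a,u)$. We then close a continuity argument in which smallness is required only for $\|\partial_x(a,u)\|_{H^{s-1}}$.

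\textbf{Step 1: basic energy.} Multiply the momentum equation of (\ref{eq0}) by $u$ and use the continuity equation together with the definition of $G(\rho)$. This gives the exact identity
\begin{align*}
\frac{d}{dt}E_0+2\int_{\mathbb{R}}u\,\Lambda^{2\beta}u\,dx=0 .
\end{align*}
Hence $E_0(t)+2\int_0^t\|\Lambda^\beta u\|_{L^2}^2\le E_0(0)\le C(M)$, provided $\rho$ stays in a compact subset of $(0,\infty)$. This is the sense in which "the $L^2$ energy is self-closed". The pointwise bound on $a$ is supplied by a priori control: in 1D,
\begin{align*}
\|a\|_{L^\infty}\le C\|a\|_{L^2}^{1/2}\|a_x\|_{L^2}^{1/2}\le C(M)\,\varepsilon^{1/2},
\end{align*}
where $\varepsilon$ denotes the bootstrap bound on $\|\partial_x(a,u)\|_{H^{s-1}}$. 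Choosing $\varepsilon$ small relative to $M$ keeps $|a|\le 1/2$. Because $G(\rho)\sim a^2$ there, we also get $\|(a,u)\|_{L^2}\lesssim M$ for all time.

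\textbf{Step 2: the functional $\mathscr{E}_1$.} Apply $\Lambda^{1}$ and $\Lambda^{1+s}$ to (\ref{eq1}) and take inner products. The linear part yields $\frac{d}{dt}(E_1+E_{1+s})+2\|\Lambda^{1+\beta}u\|_{\dot H^0\cap\dot H^{s}}^2$. Dissipation of $a$ comes from the cross term $\langle \Lambda^\beta a_x,2k\Lambda^\beta u\rangle_{H^{s-2\beta+1}}$: differentiating it in time and using $\partial_t u+\gamma a_x+\dots$ produces $k\gamma\|\Lambda^{1+\beta}a\|_{H^{s-2\beta+1}}^2$. The price is terms bounded by $k\|\Lambda^{1+\beta}u\|^2$ plus a $\Lambda^{2\beta}$-term. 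The restriction $\beta\ge\frac12$ lets $\Lambda^{2\beta}u$ be absorbed at the regularity $H^{s-2\beta+1}$. For $k$ small, $\mathscr{E}_1\sim\|\partial_x(a,u)\|_{H^{s}}^2$-type quantities, so $\mathscr{E}_1\sim \|(a_x,u_x)\|_{\dot H^0\cap \dot H^s}^2$, and $\frac{d}{dt}\mathscr{E}_1+2\mathscr{D}_1\le |N|$, where $N$ collects the nonlinear contributions.

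\textbf{Step 3: nonlinear estimates (main obstacle).} The goal is $|N|\le C\,\Phi\,\mathscr{D}_1$, with $\Phi$ depending on $\|(a,u)\|_{L^2}\le C(M)$ and on $\mathscr{E}_1^{1/2}$ through a positive power of $\mathscr{E}_1$. This is the delicate point, because every term must contain at least one small factor. Terms like $\langle\Lambda(au)_x,\Lambda a\rangle$, $uu_x$, $K(a)a_x$ and $\frac{a}{1+a}\Lambda^{2\beta}u$ have to be handled with commutator estimates (Kato--Ponce) and 1D Gagliardo--Nirenberg inequalities. These inequalities interpolate $\|f\|_{L^\infty}$ or $\|f_x\|_{L^\infty}$ between $\|f\|_{L^2}$, which is large but bounded, and $\|\Lambda^{1+\beta}f\|_{L^2}$. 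The condition $\beta<\frac34$ (respectively $s>\frac32$ when $\beta=\frac12$) should be exactly what makes the exponents work. For example,
\begin{align*}
\|u_x\|_{L^\infty}\|\Lambda^{1}a\|_{L^2}^2\lesssim \|\Lambda^{1+\beta}\cdot\|^{2}\,\|(a,u)\|_{L^2}^{\theta}\mathscr{E}_1^{(1-\theta)/2}
\end{align*}
requires the interpolation to land with total power $2$ on the dissipation and a positive power on $\mathscr{E}_1$. I would verify this exponent bookkeeping term by term, first for the worst term $a\Lambda^{2\beta}u$ at top order. Low frequencies of $a$ are absent from $\mathscr{D}_1$ only at order below $1+\beta$, and the $L^2$ bound covers them.

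\textbf{Step 4: closing.} With $\Phi\le C(M)\mathscr{E}_1^{\theta'}$, the bootstrap hypothesis $\mathscr{E}_1\le 2C\delta^2$ gives $C(M)(2C\delta^2)^{\theta'}\le\frac12$ for $\delta=\delta(M)$ small. This yields (\ref{1202}) and hence $\mathscr{E}_1(t)\le\mathscr{E}_1(0)\le C\delta^2$, which improves the hypothesis. The continuity argument combined with local existence gives the global solution in $C([0,\infty),H^s)$, with the $L^2$ part controlled by Step 1. Uniqueness follows from a standard $L^2$ difference estimate using the Lipschitz bounds.
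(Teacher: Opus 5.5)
Your route is the paper's. The exact identity (\ref{303}) gives an $L^2$ bound independent of $\delta$ as long as $|a|\le\frac12$, which you correctly obtain from $\|a\|_{L^\infty}\lesssim\|a\|_{L^2}^{1/2}\|a_x\|_{L^2}^{1/2}$. The derivative functional is then closed by bounding every nonlinear term by $C\,E_0^{\theta}E_1^{\theta'}\mathscr{D}_1$ with $\theta'>0$.

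There is, however, a derivative-count error that breaks Step 4 as written. You apply $\Lambda^{1}$ and $\Lambda^{1+s}$, so your functional is $\mathscr{E}_1\sim\|(a,u)\|_{\dot H^1\cap\dot H^{1+s}}^2$. For $H^s$ data, $E_{1+s}(0)$ need not even be finite. The hypothesis $\|\partial_x(a_0,u_0)\|_{H^{s-1}}\le\delta(M)$ only controls $\|(a_0,u_0)\|_{\dot H^1\cap\dot H^{s}}$. So ``$\mathscr{E}_1(0)\le C\delta^2$'' does not follow, and you would be propagating one derivative more than the class $C([0,\infty),H^s)$ carries.

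The literal formula for $\mathscr{E}_1$ in the introduction invites your reading. But the paper's proof (with $s=1+\beta$) actually works with $E_1+E_{1+\beta}=E_1+E_s$, the cross term $\langle\Lambda^\beta a_x,2k\Lambda^\beta u\rangle_{H^{1-\beta}}$, and $\mathscr{D}_1=k\gamma\|\Lambda^{1+\beta}a\|_{H^{1-\beta}}^2+\|\Lambda^{1+\beta}u\|_{H^{\beta}}^2$. Hence $\mathscr{E}_1\approx\|\partial_x(a,u)\|_{H^{s-1}}^2$, which is exactly the quantity the hypothesis makes small. Lower your top order from $\Lambda^{1+s}$ to $\Lambda^{s}$ and your continuity argument closes as you describe.

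Step 3 is only announced, but the bookkeeping you describe is what the paper carries out in (\ref{404})--(\ref{412}). The model term is $\langle\Lambda(au)_x,\Lambda a\rangle=\langle\Lambda^{1-\beta}(au)_x,\Lambda^{1+\beta}a\rangle$. The product estimate with the Lebesgue pair $L^{1/(\frac32-2\beta)}\times L^{1/(2\beta-1)}$, followed by Sobolev embedding, bounds it by $C\|\Lambda^{1+\beta}a\|_{L^2}\|\Lambda^{1+\beta}(a,u)\|_{L^2}\|\Lambda^{\frac32-2\beta}(a,u)\|_{L^2}\le CE_0^{\beta-\frac14}E_1^{\frac34-\beta}\mathscr{D}_1$. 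Both Lebesgue exponents are admissible and the power of $E_1$ is positive exactly when $\frac12<\beta<\frac34$. Your displayed example is consistent with this; scaling forces $\theta=2\beta-\frac12$ there.

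The remaining terms go the same way. The small factors are $\|(a,u)\|_{L^\infty}\lesssim\|(a,u)\|_{L^2}^{1/2}\|\partial_x(a,u)\|_{L^2}^{1/2}$ and $\|\partial_x(a,u)\|_{L^\infty}\lesssim\mathscr{E}_1^{1/2}$; the latter is where $s-1>\frac12$ is used. Non-dissipative factors such as $\|\Lambda^{2\beta}a\|_{L^2}$ must be interpolated so that exactly two powers land on $\mathscr{D}_1$.
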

\begin{theorem}\label{1theo4}
Let $\frac{1}{2}<\beta<\frac{3}{4}$, $s\ge1+\beta$ or $\beta=\frac{1}{2}$, $s>\frac{3}{2}$. Let $(a,u)$ be a strong solution of (\ref{eq1}) with the initial data  $(a_{0}, u_{0})$ under the condition in Theorem \ref{1theo3}. If additionally $\left(a_{0}, u_{0}\right) \in \dot{B}_{2, \infty}^{-\frac{1}{2}}$, then there exists  $C>0$  such that for every $t>0$ and $s_1\in [0,s]$,
\begin{align*}
\left\|\Lambda^{s_1}(a,u)\right\|_{L^{2}} \leq C(1+t)^{-\frac{1+2s_1}{4\beta}}.
\end{align*}
\end{theorem}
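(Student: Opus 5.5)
We prove Theorem \ref{1theo4} by a pure energy method: we use the Lyapunov inequality (\ref{1202}) from Theorem \ref{1theo3} together with interpolation against the negative Besov norm. No Fourier splitting is needed, so no smallness of the low frequencies is assumed.

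\textbf{Step 1: propagation of the $\dot{B}^{-\frac12}_{2,\infty}$ norm.} We apply $\dot\Delta_j$ to (\ref{eq1}) and carry out the standard $L^2$ energy estimate for each dyadic block, pairing the $a$ and $u$ equations with the symmetrizer $\gamma$ so that the linear terms $\gamma a_x$ and $u_x$ cancel. Multiplying by $2^{-j/2}$ and taking the supremum gives
\begin{align*}
\|(a,u)(t)\|_{\dot{B}^{-\frac12}_{2,\infty}}\le \|(a_0,u_0)\|_{\dot{B}^{-\frac12}_{2,\infty}}+C\int_0^t \|(au, u^2, K(a)a_x, \tfrac{a}{1+a}\Lambda^{2\beta}u)\|_{\dot{B}^{-\frac12}_{2,\infty}}d\tau ,
\end{align*}
where $K(a)a_x$ is also written as the $x$-derivative of a function of $a$ where possible. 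We then use the 1D embedding $L^1\hookrightarrow\dot B^{-\frac12}_{2,\infty}$. Since the bad terms are in divergence form, they can be bounded by $\|(a,u)\|_{L^2}\|(a,u)_x\|_{L^2}$-type quantities. The non-divergence term $\frac{a}{1+a}\Lambda^{2\beta}u$ is bounded by $\|a\|_{L^2}\|\Lambda^{2\beta}u\|_{L^2}$.

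\textbf{Step 2: the decay inequality at level one.} Since $\beta<1<s-\beta+1$, we have $\mathscr{E}_1\sim\|\Lambda(a,u)\|_{H^{s-1}}^2$, while $\mathscr D_1\gtrsim \|\Lambda^{1+\beta}(a,u)\|_{H^{s-2\beta+1}}^2$. Interpolating between $\dot B^{-\frac12}_{2,\infty}$ and $\dot H^{1+\beta}$ gives
\begin{align*}
\|\Lambda(a,u)\|_{L^2}\le C\|(a,u)\|_{\dot{B}^{-\frac12}_{2,\infty}}^{\theta}\|\Lambda^{1+\beta}(a,u)\|_{L^2}^{1-\theta},\qquad \theta=\tfrac{\beta}{\frac32+\beta}.
\end{align*}
The higher-order parts of $\mathscr E_1$ are controlled directly by $\mathscr D_1$. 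Hence, with $N(t)=\sup_{\tau\le t}\|(a,u)(\tau)\|_{\dot B^{-\frac12}_{2,\infty}}$, we obtain
\begin{align*}
\tfrac{d}{dt}\mathscr E_1+c N(t)^{-\frac{2\theta}{1-\theta}}\mathscr E_1^{1+\frac{\theta}{1-\theta}\cdot\frac{1}{1}}\le0 ,\qquad \tfrac{\theta}{1-\theta}=\tfrac{2\beta}{3}.
\end{align*}
Integrating this ODE gives $\mathscr E_1(t)\le C(1+N(t))^2(1+t)^{-\frac{3}{2\beta}}$, which is the claimed rate for $s_1=1$.

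\textbf{Step 3: closing the bootstrap for $N(t)$.} This is the main obstacle, because $\|(a,u)\|_{L^2}$ is only bounded by $M$ and is not small. In Step 1 we place the derivative on the decaying factor. For example, with $1/p=1/2+1/4$ (applying $L^{4/3}\hookrightarrow\dot B^{-\frac12}_{2,\infty}$ after extracting $\partial_x$ would give $\dot B^{-\frac32}$), the better route is to bound $au$ in $\dot B^{\frac12}_{2,\infty}$-type norms. We obtain a bound by $\|(a,u)\|_{L^2}\|(a,u)\|_{\dot H^{1}}$ on $\partial_x(au)$ through $L^1$, that is, by $M\,\mathscr E_1^{1/2}\lesssim M(1+N)(1+t)^{-\frac{3}{4\beta}}$. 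The term $\frac{a}{1+a}\Lambda^{2\beta}u$ is bounded by $\|a\|_{L^2}\|\Lambda^{2\beta}u\|_{L^2}$, and $\|\Lambda^{2\beta}u\|_{L^2}$ decays at least like $\mathscr E_1^{1/2}$ in time after interpolation with the $\mathscr D_1$ integrability. These rates are integrable precisely when $\frac{3}{4\beta}>1$, that is, $\beta<\frac34$; this explains the restriction in the theorem. To handle the non-small factor $M$, we combine the bound with the smallness of $\mathscr E_1$: we write $N(t)\le C_0+C M\delta^{\epsilon}(1+N(t))$, using part of the decaying factor to carry $\delta$. Taking $\delta(M)$ small then closes the bootstrap, so $N(t)\le 2C_0$ for all $t$.

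\textbf{Step 4: the full range $s_1\in[0,s]$.} For $s_1\in[0,1]$, we interpolate between $N(t)$ and the $\dot H^1$ decay from Step 2. For the range $(1,s]$, we repeat the energy argument of Step 2 at levels $\Lambda^{s_1}$. In that repetition we use the already obtained decay of lower norms to absorb the nonlinear terms, and interpolate with $\dot B^{-\frac12}_{2,\infty}$ as before, giving the rate $(1+t)^{-\frac{1+2s_1}{4\beta}}$.
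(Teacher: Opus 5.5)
Your Steps 1--3, and Step 4 for $s_1\in[0,1]$, are sound, and they take a genuinely different route from the paper.

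\textbf{Comparison with the paper for $s_1\in[0,1]$.} The paper reaches the $\dot B^{-\frac12}_{2,\infty}$ bound only at the end of a Fourier-splitting chain in Proposition \ref{4prop3}. That chain runs $\mathscr{E}_0\lesssim(1+t)^{1-\frac{3}{4\beta}}$, then a Gronwall step to $(1+t)^{-\frac{1}{4\beta}}$, then $\mathscr{E}_1\lesssim(1+t)^{-\frac{5}{4\beta}}$, then the Besov bound, and finally the optimal rates.

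You instead interpolate $\dot H^1$ between $\dot B^{-\frac12}_{2,\infty}$ and $\dot H^{1+\beta}$ inside \eqref{1202}. This gives $\mathscr{E}_1\lesssim(N(t)+\delta)^2(1+t)^{-\frac{3}{2\beta}}$ in one step. You then close $N$ by absorption, using $\|F\|_{L^1}+\|H\|_{L^1}\lesssim M\mathscr{E}_1^{1/2}$ together with $\mathscr{E}_1\lesssim\delta^2$. The dependence on $N$ is even sublinear, since $\frac{4\beta}{3}<1$. Interpolating $N$ against $\dot H^1$ then gives the optimal rates on $[0,1]$, which is shorter than the paper's iteration.

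\textbf{The gap: $s_1\in(1,s]$.} Take $s=1+\beta$, as the paper does.
You need $\|\Lambda^{1+\beta}(a,u)\|_{L^2}^2\lesssim(1+t)^{-\frac{3}{2\beta}-1}$. The bound on $\mathscr{E}_1$ only gives $(1+t)^{-\frac{3}{2\beta}}$, a full factor $(1+t)^{-1}$ short. Interpolation then yields only $(1+t)^{-\frac{3}{4\beta}}$ on $(1,1+\beta]$.

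``Repeating Step 2 at level $\Lambda^{s_1}$'' presupposes a Lyapunov inequality $\frac{d}{dt}\mathscr{E}^{(s_1)}+\mathscr{D}^{(s_1)}\le 0$ with $\mathscr{D}^{(s_1)}\gtrsim\|\Lambda^{s_1+\beta}(a,u)\|_{L^2}^2$ at low frequencies. Theorem \ref{1theo3} does not supply this, and at the top level it is not a routine repetition:
\begin{itemize}
\item The low-frequency damping of $a$ would have to come from a cross term such as $\langle\Lambda^{2\beta-2}\Lambda^{s_1}a_x,\Lambda^{s_1}u\rangle$. Its symbol $|\xi|^{2s_1+2\beta-1}$ exceeds the available $|\xi|^{2+2\beta}$ at high frequencies once $s_1>\frac32$, so it must be frequency-localized.
\item Terms like $\|u_x\|_{L^\infty}\|\Lambda^{1+\beta}a\|_{L^2}^2$ cannot be absorbed into such low-frequency dissipation.
\end{itemize}
None of this is carried out in your proposal.

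\textbf{The missing idea and how it fits.} The paper uses a time-weighted argument, and it plugs directly into the output of your Step 2.
\begin{itemize}
\item Multiply \eqref{1202} by $(1+t)^{\frac{3}{2\beta}+1}$ and use $\mathscr{E}_1\lesssim(1+t)^{-\frac{3}{2\beta}}$. This gives $\int_0^t(1+t')^{\frac{3}{2\beta}+1}\mathscr{D}_1\,dt'\lesssim 1+t$.
\item Do the plain $\Lambda^{1+\beta}$ energy estimate. The linear coupling cancels, and the nonlinear terms are bounded using the decay of $\|(a,u)\|_{L^\infty}$ and $\|\partial_x(a,u)\|_{L^\infty}$. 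The result is $\frac{d}{dt}\|\Lambda^{1+\beta}(\sqrt{\gamma}a,u)\|_{L^2}^2\le C(1+t)^{-1}\mathscr{D}_1$.
\item Multiply this by $(1+t)^{\frac{3}{2\beta}+2}$ and use $\|\Lambda^{1+\beta}(a,u)\|_{L^2}^2\lesssim\mathscr{D}_1$. This produces the missing factor $(1+t)^{-1}$.
\end{itemize}
Interpolation with $\dot H^1$ then covers $s_1\in(1,1+\beta)$.
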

\begin{remark}
From Theorem \ref{1theo2}, we know the time decay rates established in Theorem \ref{1theo4} are optimal.
\end{remark}
\subsection{Motivations and main ideas}
\textbf{Case1: Global regularity and time decay rates for small solutions to the CNS equations}

We firstly focus on establishing the global regularity for (\ref{eq1}). When $\frac{1}{2}<\beta<1$, we can get the global regularity in critical Sobolev spaces $H^{\beta}$. If $\beta=\frac{1}{2}$, due to the Sobolev embedding $H^{\frac{1}{2}+}\hookrightarrow L^{\infty}$, we need to require regularity index $s>\frac{1}{2}$. Then we can use standard bootstrap argument to get the global regularity results. More details can be found in Section \ref{section3}. The global regularity for $\beta=\frac{1}{2}$ in critical space remains an open problem. We conjecture that the critical space for $\beta=\frac{1}{2}$ is $B_{2,1}^{\frac{1}{2}}$, which can be embedded into the Sobolev space $H^{\frac{1}{2}}$.

Next, we aim to study the long time behavior of CNS equations (\ref{eq1}). In contrast to the high-dimensional cases, we cannot directly use the Fourier splitting method to obtain the initial decay rate since the decay may be slower. For convenience of explanation, we consider the case where $s=\beta$ and $\frac{1}{2}<\beta<1$.  A direct application of Schonbek's strategy \cite{Schonbek1985,Schonbek1991} only yields the following estimate:
  \begin{align*}
  \frac{d}{dt}\mathscr{E}_{0}(t)+\frac{C_{2}}{1+t}\left(k\gamma\|a\|_{H^{\beta}}^{2}+\|u\|_{H^{\beta}}^{2}\right)\leq C(1+t)^{-\frac{1}{4\beta}-\frac{1}{2}}.
  \end{align*}  
  This implies that
    \begin{align*}
\mathscr{E}_{0}(t)\leq C(1+t)^{-\frac{1}{4\beta}+\frac{1}{2}}.
  \end{align*}  
 However, this estimate is not useful for deriving the time decay rate. Inspired by \cite{MR4549954}, we apply bootstrap method again to obtain the optimal time decay rate. We assume
  \begin{align}\label{1302}
  (1+t)^{\frac{1}{4\beta}}\|(a,u)\|_{L^{2}}+(1+t)^{\frac{1}{4\beta}+\frac{1}{2}}\|\Lambda^{\beta}(a,u)\|_{L^{2}}\le C_{0}\delta,
  \end{align}
  where $C_{0}$ is determined later. To validate the bootstrap argument here, we impose an additional smallness condition on the initial data, specifically requiring that
  \begin{align}\label{1303}
   \|(a_{0}, u_{0})\|_{\dot{B}^{-\frac{1}{2}}_{2,\infty}}\le\delta. 
   \end{align}
   By virtue of (\ref{1302}), we obtain the key time weighted integrability. Subsequently, we may prove the propagation of negative regularity for the solutions obtained in Theorem \ref{1theo1}. Through precise estimates, we can deduce that
   \begin{align*}
  (1+t)^{\frac{1}{4\beta}}\|(a,u)\|_{L^{2}}+(1+t)^{\frac{1}{4\beta}+\frac{1}{2}}\|\Lambda^{\beta}(a,u)\|_{L^{2}}\le \frac{C_{0}\delta}{2}.
  \end{align*}
  Hence, we derive the upper bound of time decay rates for CNS equations (\ref{eq1}). A natural question then arises: How to remove the smallness assumption for the negative index Besov space $\dot{B}_{2,\infty}^{-\frac{1}{2}}$ under low regularity. Finally,  we introduce a new weighted energy estimate instead of complex spectral analysis to prove the lower bound of the decay rate. The detailed proofs can be found in Section \ref{section3}. As can be seen from the proof, our methods for deriving the decay rate does not apply to the case with $\beta=1$.  This seems to be a challenging problem in obtaining the optimal decay rates for the classical CNS equations.
  
  \textbf{Case2: Global regularity and time decay rates for large solutions to the CNS equations}
  
For simplicity, we restrict our attention to the case with $\frac{1}{2}<\beta<1$. It should be noticed that 
\begin{align}\label{1304}
\frac{d}{dt}\int_{\mathbb{R}}\left(G(\rho)+\frac{1}{2}\rho u^2\right)dx+\|\Lambda^{\beta}u\|_{L^2}^{2}=0,
\end{align}
which means that the $L^2$ energy is self-closed. This key observation motivates us to establish the global regularity of large solutions for CNS equations (\ref{eq1}). When testing the (\ref{eq1})$^{1}$ with $\Lambda^{2} a$, one may derive the following estimate from the right-hand side of (\ref{eq1})$^{1}$:
\begin{align}\label{1305}
-\langle\Lambda(au)_x,\Lambda a\rangle\le\ &C\|\Lambda^{1+\beta}a \|_{L^{2}}\|\Lambda^{2-\beta}(a,u)\|_{L^{\frac{1}{\frac{3}{2}-2\beta}}}\|(a,u)\|_{L^{\frac{1}{2\beta-1}}}\\\nonumber
\le\ &C\|\Lambda^{1+\beta}a\|_{L^{2}}\|\Lambda^{1+\beta}(a,u)\|_{L^{2}}\|\Lambda^{\frac{3}{2}-2\beta}(a,u)\|_{L^{2}}.
 \end{align}
 From (\ref{1305}),  we see that $\beta<\frac{3}{4}$. If $\beta=\frac{1}{2}$, owing to $H^{\frac{3}{2}+}\hookrightarrow C^{0,1}$, we require regularity index $s>\frac{1}{2}$. Then we yield the global regularity with large data.
 Our next goal is to establish the time decay rates with the low-frequency smallness assumption removed. As observed in the previous analysis, it is difficult to obtain an initial $L^2$ time decay rate in the one-dimensional case. Motivated by \cite{Guo01012012}, to begin with, we derive a Lyapunov-type inequality by applying the Sobolev interpolation and (\ref{1202}):
 \begin{align*}
\frac{d}{dt}\mathscr{E}_{1}+C\mathscr{E}_{1}^{1+\beta}\le 0.
\end{align*}
Directly calculation yields 
\begin{align}\label{1306}
\mathscr{E}_{1}\le C(1+t)^{-\frac{1}{\beta}}.
\end{align}
 This decay rate acts as a bootstrap that facilitates the derivation of the initial $L^2$ time decay rate. By virtue of improved Fourier splitting method and the Littlewood-Paley decomposition theory, we may obtain the optimal time decay rates through a delicate iterative argument. In what follows, we use a flow chart to depict the general proof strategy of Theorem \ref{1theo4}.
\begin{align*}
	\begin{split}&\boxed{\frac{d}{dt}\mathscr{E}_{0}+\mathscr{D}_{0}\le 0}\xrightarrow{(\ref{1306})}\boxed{\mathscr{E}_{0}\le C(1+t)^{-\frac{3}{4\beta}+1}}\xrightarrow{\text{First Iteration}}\boxed{\mathscr{E}_{0}\le C(1+t)^{-\frac{1}{4\beta}}}\\
&\xrightarrow{(\ref{1202})}\boxed{\mathscr{E}_{1} \le C(1+t)^{- \frac{5}{4\beta}}}\longrightarrow\boxed{\|(a,u)\|_{\dot{B}_{2,\infty}^{-\frac{1}{2}}}^{2}\le C}\\ 
& \xrightarrow{\text{Second Iteration}}\boxed{\mathscr{E}_{0}\le C(1+t)^{-\frac{1}{2\beta}}} \longrightarrow\boxed{\mathscr{E}_{1}\le C(1+t)^{-\frac{3}{2\beta}}}\\
& \longrightarrow\boxed{(1+t)^{-1}\int_{0}^{t}(1+t^{\prime})^{\frac{3}{2\beta}+1}\mathscr{D}_{1}dt^{\prime}\le C}\rightarrow\boxed{E_{1+\beta}\le C(1+t)^{-\frac{3}{2\beta}-1}}\end{split}
\end{align*}
 Comparing with the Theorem \ref{1theo2}, the smallness assumption (\ref{1303}) has been removed. More technical details can
be found in Section \ref{section4}. This result implies that the $L^2$ energy of solutions to system (\ref{eq1}) still decays even if the initial $L^2$ energy is large. A challenging problem also remains here: How to obtain the optimal decay rates of large solutions for the case with $\frac{3}{4}\le\beta<1$.

\textbf{Organization of the paper}
The rest of the paper is organized as follows. In Section \ref{section2}, we introduce some lemmas which will be used
in the sequel. In Section \ref{section3}, we present the proofs for the global regularity and sharp decay rates with small data.
In Section \ref{section4}, we will prove the global regularity and sharp decay rates with large data.

\section{Preliminaries} \label{section2}
The Littlewood-Paley decomposition theory and Besov spaces are given as follows.
	\begin{lemma}\cite{Bahouri2011}\label{lemma1}
		Let $\mathscr{C}$ be the annulus $\{\xi\in\mathbb{R}:\frac 3 4\leq|\xi|\leq\frac 8 3\}$. There exists a radial function $\varphi$, valued in the interval $[0,1]$, belonging to $\mathscr{D}(\mathscr{C})$, and such that
		$$ \forall\xi\in\mathbb{R}\backslash\{0\},\ \sum_{j\in\mathbb{Z}}\varphi(2^{-j}\xi)=1,~~~ $$
		$$ |j-j'|\geq 2\Rightarrow\mathrm{Supp}\ \varphi(2^{-j}\cdot)\cap \mathrm{Supp}\ \varphi(2^{-j'}\cdot)=\emptyset, $$
		Moreover, we have
		$$ \forall\xi\in\mathbb{R}\backslash\{0\},\ \frac 1 2\leq\sum_{j\in\mathbb{Z}}\varphi^2(2^{-j}\xi)\leq 1.~~ $$
	\end{lemma}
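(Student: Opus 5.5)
The statement is the standard dyadic partition of unity on $\mathbb{R}$ (cited from Bahouri--Chemin--Danchin), and I would build it by the usual construction. First pick a smooth even function $\chi\in\mathscr{D}(\mathbb{R})$ with values in $[0,1]$. It should equal $1$ on $\{|\xi|\le \frac34\}$, be supported in $\{|\xi|\le\frac43\}$, and be nonincreasing in $|\xi|$. Such a function comes from a mollified step function in the radial variable. Then set
\begin{align*}
\varphi(\xi)=\chi\left(\tfrac{\xi}{2}\right)-\chi(\xi).
\end{align*}
Since $\chi$ is radially nonincreasing, $\varphi\ge0$, and clearly $\varphi\le 1$. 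Also $\varphi$ is smooth and even. It vanishes for $|\xi|\le\frac34$, where both terms equal $1$, and for $|\xi|\ge\frac83$, where both vanish. Hence $\mathrm{Supp}\,\varphi\subset\mathscr{C}$.

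Second, the partition identity comes from telescoping. For fixed $\xi\ne0$ and $N\ge1$,
\begin{align*}
\sum_{j=-N}^{N}\varphi(2^{-j}\xi)=\chi(2^{-N-1}\xi)-\chi(2^{N}\xi).
\end{align*}
As $N\to\infty$ the first term tends to $\chi(0)=1$ and the second tends to $0$, since $2^N|\xi|>\frac43$ eventually. For each fixed $\xi$ only finitely many terms are nonzero, so the full sum equals $1$.

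Third, the almost-orthogonality of supports. $\mathrm{Supp}\,\varphi(2^{-j}\cdot)\subset 2^{j}\mathscr{C}=\{\frac34 2^j\le|\xi|\le\frac83 2^j\}$. If $j'\ge j+2$, the inner radius of $2^{j'}\mathscr{C}$ is $\frac34 2^{j'}\ge 3\cdot2^{j}>\frac83 2^j$, so the two supports are disjoint. The only numerical point to check in the whole proof is that $3>\frac83$.

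Finally, the quadratic bounds. Because $0\le\varphi\le1$, we get $\sum_j\varphi^2(2^{-j}\xi)\le\sum_j\varphi(2^{-j}\xi)=1$. For the lower bound, the disjointness just shown means that at most two consecutive indices $j,j+1$ have $\varphi(2^{-j}\xi)\ne0$ at a given $\xi$. Their values $x,y\ge0$ satisfy $x+y=1$, so $x^2+y^2\ge\frac12(x+y)^2=\frac12$ by Cauchy--Schwarz. I expect no real obstacle here; the only care needed is choosing $\chi$ radially monotone so that $\varphi\ge0$, and checking the support radii so that $\mathscr{C}$ is exactly $\{\frac34\le|\xi|\le\frac83\}$.
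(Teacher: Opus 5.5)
Your argument is correct. The telescoping identity, the support check ($\varphi=0$ for $|\xi|\le\frac34$ and for $|\xi|\ge\frac83$), the disjointness via $\frac34\cdot 2^{j+2}=3\cdot 2^{j}>\frac83\cdot 2^{j}$, and the bound $x^2+y^2\ge\frac12(x+y)^2$ on the at most two consecutive nonzero terms all go through.

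The paper gives no proof of its own; it simply cites Bahouri--Chemin--Danchin, where the construction is different. There one fixes $\alpha\in(1,\tfrac43)$ and takes a nonnegative radial $\theta\in\mathscr{D}(\mathscr{C})$ equal to $1$ near the thinner annulus $\mathscr{C}'=\{\alpha^{-1}\le|\xi|\le 2\alpha\}$. One then forms the locally finite sum $S(\xi)=\sum_{j\in\mathbb{Z}}\theta(2^{-j}\xi)$. This sum is smooth on $\mathbb{R}\backslash\{0\}$, satisfies $S\ge 1$ there because the dilates $2^{j}\mathscr{C}'$ cover $\mathbb{R}\backslash\{0\}$, and is invariant under $\xi\mapsto 2\xi$. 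One sets $\varphi=\theta/S$, and defines the low-frequency function afterwards as $\chi=1-\sum_{j\ge0}\varphi(2^{-j}\cdot)$.

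The normalization gives $0\le\varphi\le1$ for free (as $0\le\theta\le S$) without any monotonicity. The cost is checking smoothness and positivity of $S$. Your route instead needs $\chi$ radially nonincreasing to get $\varphi\ge0$. In exchange, it makes $\sum_j\varphi(2^{-j}\xi)=1$ a one-line computation. It also hands you the inhomogeneous decomposition with an explicit $\chi$, since $\sum_{j=0}^{N}\varphi(2^{-j}\xi)=\chi(2^{-N-1}\xi)-\chi(\xi)\to 1-\chi(\xi)$. The disjointness and the quadratic bounds are proved identically in both approaches.
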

			
		Let $u$ be a tempered distribution in $\mathcal{S}_{h}'(\mathbb{R})$. For all $j\in\mathbb{Z}$, define
		The homogeneous operators are defined by
		$$\dot{\Delta}_j u=\mathscr{F}^{-1}(\varphi(2^{-j}\cdot)\mathscr{F}u).$$
Then the Littlewood-Paley decomposition is given as follows:
$$
u=\sum_{j\in\mathbb{Z}}\dot{\Delta}_j u\quad  \mathrm{in}\quad  \mathcal{S}_{h}'(\mathbb{R}). 
$$		
		Let $s\in\mathbb{R}$ and $(p,r)\in[1,\infty]^2$. The homogeneous Besov space $\dot{B}^s_{p,r}$ is given as follows
	$$ \dot{B}^s_{p,r}=\left\{u\in \mathcal{S}_{h}'(\mathbb{R}):\|u\|_{\dot{B}^s_{p,r}}=\Big\|(2^{js}\|\dot{\Delta}_j u\|_{L^p})_j \Big\|_{l^r(\mathbb{Z})}<\infty\right\}.$$
 We introduce the Gagliardo-Nirenberg inequality of Sobolev type with
$d = 1$.
\begin{lemma}\label{lemma3}\cite{1959On}
For $d=1$, $p\in[2,+\infty)$ and $0\leq s,s_{1}\leq s_{2}$, there holds
\begin{align*}
\|\Lambda^{s}f\|_{L^{p}}\leq C\|\Lambda^{s_{1}}f\|_{L^{2}}^{1- \theta}\|\Lambda^{s_{2}}f\|_{L^{2}}^{\theta},
\end{align*}
where $0\leq\theta\leq 1$ and satisfies  
$$s+\frac{1}{2}-\frac{1}{p}=(1-\theta)s_{1}+\theta s_{2}.$$ 
Note that we also require that $0<\theta<1, 0\leq s_{1}\leq s$, when $ p=\infty$.
\end{lemma}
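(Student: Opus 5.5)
The plan is the standard Fourier-analytic route from \cite{1959On}, specialised to $d=1$ and to homogeneous Sobolev norms; it consists of a scaling check followed by an interpolation.

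\textbf{Scaling.} First observe that the relation
$$s+\tfrac12-\tfrac1p=(1-\theta)s_1+\theta s_2$$
is exactly the condition making both sides scale identically under $f\mapsto f(\lambda\cdot)$. So it suffices to prove the inequality with a constant independent of scaling.

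\textbf{Case $p<\infty$.} I would use the homogeneous Sobolev embedding $\dot H^{\frac12-\frac1p}(\mathbb R)\hookrightarrow L^p(\mathbb R)$, valid for $2\le p<\infty$ (Hardy--Littlewood--Sobolev applied to $\Lambda^{-(\frac12-\frac1p)}$). This gives
$$\|\Lambda^s f\|_{L^p}\le C\|\Lambda^{\sigma}f\|_{L^2},\qquad \sigma=s+\tfrac12-\tfrac1p .$$
Since $\sigma=(1-\theta)s_1+\theta s_2$ with $\theta\in[0,1]$, Hölder's inequality on the Fourier side with exponents $\frac1{1-\theta},\frac1\theta$, applied to
$$|\xi|^{2\sigma}|\hat f|^2=\big(|\xi|^{2s_1}|\hat f|^2\big)^{1-\theta}\big(|\xi|^{2s_2}|\hat f|^2\big)^{\theta},$$
yields $\|\Lambda^\sigma f\|_{L^2}\le\|\Lambda^{s_1}f\|_{L^2}^{1-\theta}\|\Lambda^{s_2}f\|_{L^2}^{\theta}$. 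This closes the case. The endpoints $\theta\in\{0,1\}$ are included, and the Sobolev step is trivial when $p=2$.

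\textbf{Case $p=\infty$.} Here $\dot H^{1/2}\not\hookrightarrow L^\infty$, which is why the statement requires $0<\theta<1$ and $s_1\le s$; this is the main obstacle. Since $s_1\le s<s+\frac12=\sigma$, we have $s_1<\sigma<s_2$. Bound
$$\|\Lambda^s f\|_{L^\infty}\le(2\pi)^{-1}\int|\xi|^s|\hat f|\,d\xi$$
and split the integral at a radius $R$. On $|\xi|\le R$, Cauchy--Schwarz gives
$$\Big(\int_{|\xi|\le R}|\xi|^{2(s-s_1)}\,d\xi\Big)^{1/2}\|\Lambda^{s_1}f\|_{L^2}\le C R^{s-s_1+\frac12}\|\Lambda^{s_1}f\|_{L^2},$$
where integrability uses $s-s_1\ge0$. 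On $|\xi|>R$ one gets $CR^{s-s_2+\frac12}\|\Lambda^{s_2}f\|_{L^2}$, where integrability uses $s_2>s+\frac12$, i.e.\ $\theta<1$.

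Finally, optimise in $R$ by choosing
$$R^{s_2-s_1}=\|\Lambda^{s_2}f\|_{L^2}/\|\Lambda^{s_1}f\|_{L^2}.$$
With this choice both terms equal $\|\Lambda^{s_1}f\|_{L^2}^{1-\theta}\|\Lambda^{s_2}f\|_{L^2}^{\theta}$, since
$$\theta=\frac{s+\frac12-s_1}{s_2-s_1}.$$
The degenerate cases, where one of the norms vanishes or is infinite, are trivial. This completes the plan.
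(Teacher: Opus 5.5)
The paper gives no proof of this lemma; it is simply quoted from \cite{1959On}. Your argument is correct and gives a self-contained proof of exactly the version stated, so the useful comparison is with what that citation actually provides.

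\textbf{The two cases.} For $p<\infty$, you apply the homogeneous embedding $\dot H^{\frac12-\frac1p}(\mathbb R)\hookrightarrow L^p(\mathbb R)$ to $\Lambda^s f$ and then use H\"older's inequality in frequency, i.e.\ log-convexity of $\sigma\mapsto\|\Lambda^\sigma f\|_{L^2}$. This is right, including the endpoints $\theta\in\{0,1\}$. For $p=\infty$, the splitting at $R$ with $R^{s_2-s_1}=\|\Lambda^{s_2}f\|_{L^2}/\|\Lambda^{s_1}f\|_{L^2}$ is computed correctly, and both pieces equal $\|\Lambda^{s_1}f\|_{L^2}^{1-\theta}\|\Lambda^{s_2}f\|_{L^2}^{\theta}$.

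\textbf{Comparison with the citation.} Nirenberg's original inequality is proved in physical space for integer-order derivatives, with the lower-order norm taken on $f$ itself. It therefore does not literally cover the fractional form $\|\Lambda^{s_1}f\|_{L^2}^{1-\theta}\|\Lambda^{s_2}f\|_{L^2}^{\theta}$ with non-integer $s,s_1,s_2$ that the paper uses repeatedly, e.g.\ $\|u_x\|_{L^{\frac{1}{1-\beta}}}\le C\|\Lambda^{\beta+\frac12}u\|_{L^2}$. Your Fourier-side route exploits that every norm on the right is $L^2$-based, so it handles these fractional exponents directly and is the better-adapted proof here.

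\textbf{Role of the hypotheses at $p=\infty$.} Your argument also shows where each extra hypothesis enters. The high-frequency integral needs $s_2>s+\frac12$, which is $\theta<1$. The low-frequency integral only needs $s-s_1>-\frac12$, so your remark that integrability uses $s-s_1\ge0$ is sufficient but not sharp: the hypothesis $s_1\le s$ could be relaxed to $s_1<s+\frac12$.

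\textbf{Technical point.} The only thing worth making explicit is that $\hat f$ should be a locally integrable function, so that $\Lambda^s f$, the homogeneous Sobolev embedding and Fourier inversion all make sense. This holds, for instance, for $f\in H^{s_2}$, which covers every application in the paper.
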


The following commutator estimates and product estimate are useful for energy estimate.
\begin{lemma}\label{lemma4}\cite{kato1}
Assume that $s>0$, $p, p_1, p_4 \in (1,\infty)$ and $\frac{1}{p}=\frac{1}{p_1}+\frac{1}{p_2}=\frac{1}{p_3}+\frac{1}{p_4}$, then we obtain
\begin{align*}
\|[\Lambda^{s},f]g\|_{L^{p}}\leq C\left(\|\Lambda^{s}f\|_{L^{p_{1}}}\|g\|_{L^{p_{2} }}+\|\nabla f\|_{L^{p_{3}}}\|\Lambda^{s-1}g\|_{L^{p_{4}}}\right).
\end{align*}
\end{lemma}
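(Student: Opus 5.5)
The plan is to prove the commutator estimate of Lemma \ref{lemma4} with Bony's paraproduct decomposition, built on the Littlewood--Paley blocks $\dot{\Delta}_j$ of Lemma \ref{lemma1}, and then to bound each piece by Littlewood--Paley square function estimates together with the Fefferman--Stein vector-valued maximal inequality; both are available because $p,p_1,p_4\in(1,\infty)$.

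\textbf{Decomposition.} Let $\dot S_{j}=\sum_{j'\le j-1}\dot{\Delta}_{j'}$. Write
$$fg=T_fg+T_gf+R(f,g),\qquad T_fg=\sum_j \dot S_{j-1}f\,\dot{\Delta}_jg,\qquad R(f,g)=\sum_{|j-j'|\le1}\dot{\Delta}_jf\,\dot{\Delta}_{j'}g.$$
This gives
\begin{align*}
[\Lambda^s,f]g=[\Lambda^s,T_f]g+\Lambda^sT_gf+\Lambda^sR(f,g)-T_{\Lambda^sg}f-R(f,\Lambda^sg).
\end{align*}

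\textbf{Terms with $f$ at high frequency.} For $\Lambda^sT_gf$, each block $\dot S_{j-1}g\,\dot{\Delta}_jf$ has Fourier support in an annulus of size $2^j$. Hence $\Lambda^s$ acts like $2^{js}$ on it, and the square function bound gives
$$\|\Lambda^sT_gf\|_{L^p}\lesssim \|\sup_j|\dot S_{j-1}g|\,(\textstyle\sum_j 2^{2js}|\dot{\Delta}_jf|^2)^{1/2}\|_{L^p}\lesssim\|g\|_{L^{p_2}}\|\Lambda^sf\|_{L^{p_1}}.$$
This holds even when $p_2=\infty$, since $\sup_j|\dot S_{j-1}g|\le CMg$.

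For $\Lambda^sR(f,g)$ the blocks are supported in balls of radius $\sim2^j$, so no square function is available. Here the hypothesis $s>0$ is used: the sum over dyadic blocks $k\le j+2$ with weights $2^{(k-j)s}$ converges. This gives again $\|\Lambda^sf\|_{L^{p_1}}\|g\|_{L^{p_2}}$.

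For $R(f,\Lambda^sg)$, I move the derivative from $g$ to $f$ via $2^{js}\dot{\Delta}_jf\sim\dot{\Delta}_j\Lambda^sf$, which yields the same bound.

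For $T_{\Lambda^sg}f=\sum_j\dot S_{j-1}\Lambda^sg\,\dot{\Delta}_jf$, I write $\dot S_{j-1}\Lambda^sg=\dot S_{j-1}\Lambda(\Lambda^{s-1}g)$ and $\dot{\Delta}_jf=2^{-j}\dot{\Delta}_j\tilde{\mathcal R}\partial_xf$ with a harmless Fourier multiplier. Then
$$|\dot S_{j-1}\Lambda^sg|\lesssim 2^jM(\Lambda^{s-1}g),$$
and the factors $2^{\pm j}$ cancel. The square function in $\partial_xf$ then gives $\|\partial_xf\|_{L^{p_3}}\|\Lambda^{s-1}g\|_{L^{p_4}}$.

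\textbf{Main obstacle: the true commutator $[\Lambda^s,T_f]g$.} Each block is
$$[\Lambda^s\tilde{\dot{\Delta}}_j,\dot S_{j-1}f]\dot{\Delta}_jg,$$
where $\tilde{\dot{\Delta}}_j$ is a fattened projector. I would write $\Lambda^s\tilde{\dot{\Delta}}_j$ as convolution with $2^{js}2^{j}h(2^j\cdot)$, where $h$ is Schwartz. The mean value theorem applied to $\dot S_{j-1}f(x)-\dot S_{j-1}f(y)$ then gives the pointwise bound
$$\lesssim 2^{j(s-1)}M(\partial_xf)(x)\,M(\dot{\Delta}_jg)(x).$$
Since the output blocks are again annularly localized, summing in $\ell^2$ and applying Fefferman--Stein bounds this term by
$$\|M\partial_xf\|_{L^{p_3}}\big\|(\textstyle\sum_j|M(2^{j(s-1)}\dot{\Delta}_jg)|^2)^{1/2}\big\|_{L^{p_4}}\lesssim\|\partial_xf\|_{L^{p_3}}\|\Lambda^{s-1}g\|_{L^{p_4}}.$$
The delicate points are checking the kernel moment bound and the endpoint $p_3=\infty$. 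At that endpoint the maximal function is simply bounded by the $L^\infty$ norm, so I expect it to cause no harm.
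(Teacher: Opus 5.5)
\textbf{Gap: the resonant term $R(f,\Lambda^sg)$ at $p_2=p_3=\infty$.} Most of your scheme is sound. The bound for $[\Lambda^s,T_f]g$ works: the mean value step is fine because $\dot S_{j-1}$ is convolution with a dilated Schwartz kernel, so $|\partial_x\dot S_{j-1}f(z)|\lesssim(1+2^j|x-z|)^{N}M(\partial_xf)(x)$. The terms $\Lambda^sT_gf$ and $\Lambda^sR(f,g)$ also work, even at $p_2=\infty$.

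The statement only requires $p,p_1,p_4<\infty$, so $p_2=p_3=\infty$ (that is, $p_1=p_4=p$) is allowed. This is the classical Kato--Ponce case, and it is exactly what the paper uses, e.g.\ in (\ref{408}) and (\ref{438}), where the bound is $\|\Lambda^{1+\beta}(a,u)\|_{L^2}\|\partial_x(a,u)\|_{L^\infty}$.

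After you move $2^{js}$ onto $f$, you are left with $\sum_j\dot{\Delta}_j'(\Lambda^sf)\,\tilde{\dot{\Delta}}_jg$, where $\dot{\Delta}_j'$ is a modified block. This is a high--high product whose output lives in balls, and it carries \emph{no} decaying weight. The factor $2^{(k-j)s}$ that saved $\Lambda^sR(f,g)$ is absent here, so ``the same bound'' does not follow by the same mechanism.

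You can repair the other cases:
\begin{itemize}
\item If $p_2<\infty$, use Cauchy--Schwarz in $j$ and the square functions of $\Lambda^sf$ and $g$.
\item If $p_3<\infty$, write the product as $\dot{\Delta}_j''\partial_xf\,\dot{\Delta}_j'''\Lambda^{s-1}g$ and use the square functions of $\partial_xf$ and $\Lambda^{s-1}g$.
\end{itemize}
When $p_2=p_3=\infty$, however, both $L^\infty$ factors give only $\ell^\infty$ control in $j$, since $(\sum_j|\dot{\Delta}_jg|^2)^{1/2}$ is not bounded by $\|g\|_{L^\infty}$. The pointwise bound $|\tilde{\dot{\Delta}}_j\Lambda^sg|\lesssim 2^{js}\|g\|_{L^\infty}$ leaves $\sum_j2^{js}|\dot{\Delta}_jf|$, which is a $\dot{F}^{s}_{p,1}$-type quantity rather than $\|\Lambda^sf\|_{L^p}$. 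No interpolation between the available bounds produces the needed $\ell^1$ summability in $j$.

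The missing ingredient is the endpoint bound for the resonant paraproduct: $\|\sum_j\dot{\Delta}_jF\,\tilde{\dot{\Delta}}_jG\|_{L^p}\lesssim\|F\|_{L^p}\|G\|_{\mathrm{BMO}}$ for $1<p<\infty$. This comes from a duality plus Carleson-measure argument, or from the Coifman--Meyer bilinear multiplier theorem. That is the tool underlying the Kato--Ponce result which the paper simply cites; square functions and Fefferman--Stein alone cannot supply it.

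\textbf{Smaller, fixable slip: $T_{\Lambda^sg}f$ at $p_3=\infty$.} You put the square function on $\partial_xf$, and $(\sum_j|\dot{\Delta}_j\partial_xf|^2)^{1/2}$ is not controlled by $\|\partial_xf\|_{L^\infty}$. Since $s>0$, use instead
\begin{itemize}
\item $|\dot S_{j-1}\Lambda^sg|\le\sum_{k\le j-2}|\dot{\Delta}_k\Lambda^sg|\lesssim 2^{js}Mg$, and
\item $\dot{\Delta}_jf=2^{-js}\dot{\Delta}_j'\Lambda^sf$, with $\dot{\Delta}_j'$ a harmless modified block.
\end{itemize}
The blocks are annular, so the square function of $\Lambda^sf$ gives $\|T_{\Lambda^sg}f\|_{L^p}\lesssim\|g\|_{L^{p_2}}\|\Lambda^sf\|_{L^{p_1}}$ for all admissible exponents, including $p_3=\infty$.
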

\begin{lemma}\label{lemma6}\cite{MR4884564}
Assume that $s\in(0,1)$, $p, p_1, p_2 \in (1,\infty)$ and $\frac{1}{p}=\frac{1}{p_1}+\frac{1}{p_2}$, then we obtain
\begin{align*}
\|[\Lambda^{s},f\partial_x ]g\|_{L^{p}}\leq C\|\Lambda^{s}g\|_{L^{p_{1}}}\|\nabla f\|_{L^{p_{2} }}.
\end{align*}
\end{lemma}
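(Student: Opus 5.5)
We will argue by Bony's paraproduct decomposition together with Littlewood--Paley square-function and maximal-function bounds in $L^{p}$, $1<p<\infty$, taking Lemma \ref{lemma1} and the homogeneous blocks $\dot{\Delta}_j$, $\dot S_j=\sum_{k\le j-1}\dot{\Delta}_k$ as the basic tools. Write $T_fh=\sum_j \dot S_{j-1}f\,\dot{\Delta}_j h$ and $R(f,h)=\sum_{|j-j'|\le1}\dot{\Delta}_jf\,\dot{\Delta}_{j'}h$. Applying the decomposition both to $f g_x$ and to $f\Lambda^s g_x$, we split
\begin{align*}
[\Lambda^{s},f\partial_x]g
=[\Lambda^s,T_f]g_x+\big(\Lambda^sT_{g_x}f-T_{\Lambda^sg_x}f\big)+\big(\Lambda^sR(f,g_x)-R(f,\Lambda^sg_x)\big),
\end{align*}
and we estimate the three groups separately. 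In each case the target is a bound of the form $\|\,\cdot\,\|_{L^p}\le C\|\Lambda^s g\|_{L^{p_1}}\|\nabla f\|_{L^{p_2}}$, obtained via Hölder with $\frac1p=\frac1{p_1}+\frac1{p_2}$.

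\emph{Low--high terms (f high frequency).} Consider first $\Lambda^sT_{g_x}f=\sum_j\Lambda^s(\dot S_{j-1}g_x\,\dot{\Delta}_jf)$, in which each summand is spectrally localized near $2^j$. Write $\dot S_{j-1}g_x=\dot S_{j-1}\partial_x\Lambda^{-s}(\Lambda^sg)$. The operator $\dot S_{j-1}\partial_x\Lambda^{-s}$ has symbol homogeneous of degree $1-s>0$ cut off at $|\xi|\lesssim 2^j$. Because $0<s<1$, its kernel is $2^{j(1-s)}$ times an $L^1$-normalized dilate with radially decreasing integrable majorant (decay like $|x|^{-(2-s)}$). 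Hence we get the pointwise bound $|\dot S_{j-1}g_x|\lesssim 2^{j(1-s)}\mathcal M(\Lambda^sg)$. Also $\dot{\Delta}_jf=2^{-j}\tilde\Delta_j(\nabla f)$ for a modified block $\tilde\Delta_j$. After $\Lambda^s$ contributes $2^{js}$, the summand is therefore bounded by $\mathcal M(\Lambda^sg)\,|\tilde\Delta_j\nabla f|$ up to a harmless localized operator. Summing with the square function, using $\|(\sum_j|\tilde\Delta_j\nabla f|^2)^{1/2}\|_{L^{p_2}}\lesssim\|\nabla f\|_{L^{p_2}}$ and Hölder, gives the bound. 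The term $T_{\Lambda^sg_x}f$ is handled identically, and more easily.

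\emph{High--high terms.} In the remainder, the $k$-th Littlewood--Paley piece of $\Lambda^sR(f,g_x)$ only receives $j\ge k-2$. Using $\dot{\Delta}_jf\sim2^{-j}\nabla f$-pieces and $\dot{\Delta}_{j'}g_x\sim 2^{j(1-s)}\Lambda^sg$-pieces, each summand carries a factor $2^{(k-j)s}$. This is summable because $s>0$, and the resulting bound is controlled by $\mathcal M(\Lambda^s g)\,\mathcal M(\nabla f)$ via the Fefferman--Stein vector-valued maximal inequality. The same estimate applies to $R(f,\Lambda^sg_x)$, without any gain needed.

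\emph{Main obstacle: the genuine commutator $[\Lambda^s,T_f]g_x$.} This is $\sum_j[\Lambda^s\tilde\Delta_j,\dot S_{j-1}f]\dot{\Delta}_jg_x$. Here the standard argument uses $\|\nabla f\|_{L^\infty}$, but we only have $\nabla f\in L^{p_2}$. The plan is to write the commutator as an integral against the kernel $2^{j(1+s)}h(2^j(x-y))$ times $\dot S_{j-1}f(y)-\dot S_{j-1}f(x)=(y-x)\int_0^1\nabla \dot S_{j-1}f(x+\tau(y-x))\,d\tau$. The factor $(y-x)$ yields $2^{-j}$, which cancels the $2^j$ from $\dot{\Delta}_jg_x$, leaving $2^{js}|\dot{\Delta}_jg|$-type quantities weighted against $\nabla\dot S_{j-1}f$ at intermediate points. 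To avoid $L^\infty$, we bound $\nabla\dot S_{j-1}f$ at shifted points by a Peetre-type maximal function (or expand $\dot S_{j-1}f$ into its dyadic pieces and use the Coifman--Meyer bilinear multiplier theorem directly on the symbol $\big(|\xi+\eta|^s-|\eta|^s\big)\eta\,\chi(\xi/\eta)$, which is $\sim |\xi|\,|\eta|^{s}$ smoothly). This gives the pointwise bound $\lesssim \mathcal M^*(\nabla f)\,\mathcal M(2^{js}\dot{\Delta}_jg)$. Taking the $\ell^2_j$ square sum and applying Fefferman--Stein in $L^{p_1}$, together with $\|(\sum 2^{2js}|\dot{\Delta}_jg|^2)^{1/2}\|_{L^{p_1}}\sim\|\Lambda^sg\|_{L^{p_1}}$, closes the estimate. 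If the pointwise route becomes messy, we will fall back on the Coifman--Meyer multiplier formulation, which is the cleanest way to pass to $L^{p_1}\times L^{p_2}\to L^p$.
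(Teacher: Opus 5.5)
The paper does not prove Lemma~\ref{lemma6}. It is quoted from \cite{MR4884564} and used as a black box, for instance in (\ref{304}) with $f=u$, $g=a$, $s=\beta$, $p=2$, $p_1=\frac{1}{\beta-\frac12}$, $p_2=\frac{1}{1-\beta}$. The comparison is therefore between citing and proving, and your Bony-paraproduct argument is a correct self-contained proof.

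\begin{itemize}
\item The splitting into $[\Lambda^s,T_f]g_x$, the two low--high terms and the two remainders is exact.
\item The low--high pieces close because $\dot S_{j-1}\partial_x\Lambda^{-s}$ is a low-frequency multiplier of positive order $1-s$. Its kernel is $2^{j(1-s)}$ times an $L^1$-normalized dilate of a function bounded by $(1+|x|)^{-(2-s)}$.
\item The high--high pieces sum because of the gain $2^{(k-j)s}$ with $s>0$.
\item The genuine commutator can be handled either way you propose. On the pointwise route, the Peetre maximal function of $\nabla\dot S_{j-1}f$ at scale $2^{-j}$ is bounded, uniformly in $j$, by $\mathcal M(\mathcal M\nabla f)$. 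The Schwartz decay of the kernel of $\Lambda^s\tilde\Delta_j$ then absorbs the polynomial weight. Alternatively, use Coifman--Meyer: after dividing by $\xi|\eta|^s$, the symbol $(|\xi+\eta|^s-|\eta|^s)\eta$ becomes a smooth order-zero symbol on the cone $|\xi|\ll|\eta|$.
\end{itemize}

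What your route buys over the bare citation is transparency. It shows where $s>0$ and $s<1$ enter, and that $p,p_1,p_2\in(1,\infty)$ is exactly what the square-function and Fefferman--Stein steps need. It also uses no structure of $f$, such as a divergence-free condition, which is precisely what the one-dimensional application with $f=u$ requires.

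Two points should be tightened.

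\textbf{High--high term.} You cannot finish with the pointwise product $\mathcal M(\Lambda^s g)\,\mathcal M(\nabla f)$, because a bound uniform in $k$ is not square-summable over $k$. After Young's inequality in $k$ (weights $2^{(k-j)s}$, $j\ge k-N$) and Fefferman--Stein, you must keep a square function on one factor:
\begin{align*}
\|\Lambda^sR(f,g_x)\|_{L^p}\lesssim\Big\|\mathcal M(\nabla f)\Big(\sum_j|\tilde\Delta_j\Lambda^sg|^2\Big)^{\frac12}\Big\|_{L^p}\le C\|\nabla f\|_{L^{p_2}}\|\Lambda^sg\|_{L^{p_1}}.
\end{align*}
In the other two groups, the step $\|\sum_jF_j\|_{L^p}\lesssim\|(\sum_j|F_j|^2)^{1/2}\|_{L^p}$ relies on each $F_j$ having Fourier support in an annulus of size $2^j$. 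You should say so explicitly.

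\textbf{Setting of the paraproduct.} $\nabla f\in L^{p_2}$ determines $f$ only up to a constant, and $f$ need not lie in $\mathcal S_h'$. Run the homogeneous decomposition for Schwartz $f,g$, noting that constants in $f$ commute with $\Lambda^s$ and $\partial_x$. Then extend by approximation to the Sobolev functions to which the lemma is applied.
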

\begin{lemma}\label{lemma5}\cite{kato1}
Assume that $s>0$, $p, p_2, p_4 \in (1,\infty)$ and $\frac{1}{p}=\frac{1}{p_1}+\frac{1}{p_2}=\frac{1}{p_3}+\frac{1}{p_4}$, then we obtain
\begin{align*}
\|\Lambda^{s}\left(fg\right)\|_{L^{p}} \leq C\left(\|f \|_{L^{p_{1}}}\|\Lambda^{s}g\|_{L^{p_{2}}} + \|g\|_{L^{p_ {3}}}\|\Lambda^{s}f\|_{L^{p_{4}}}\right).
\end{align*}
\end{lemma}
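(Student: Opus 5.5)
This is the fractional Leibniz (Kato–Ponce) estimate, so the plan is to follow the standard paraproduct route via Bony's decomposition with the Littlewood–Paley blocks of Lemma \ref{lemma1}. First we reduce to Schwartz functions $f,g$ by density. Then we split
\begin{align*}
fg=T_fg+T_gf+R(f,g),\qquad T_fg=\sum_{j}\dot S_{j-1}f\,\dot\Delta_j g,\qquad R(f,g)=\sum_{|j-j'|\le1}\dot\Delta_j f\,\dot\Delta_{j'}g ,
\end{align*}
where $\dot S_{j-1}=\sum_{j'\le j-2}\dot\Delta_{j'}$. We estimate $\Lambda^s$ of each piece in $L^p$ using the Littlewood–Paley square function characterization $\|h\|_{L^p}\simeq\|(\sum_j|\dot\Delta_jh|^2)^{1/2}\|_{L^p}$ for $1<p<\infty$.

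\textbf{Paraproducts.} For $T_fg$, each summand $\dot S_{j-1}f\,\dot\Delta_jg$ has Fourier support in an annulus of size $2^j$. Hence $\Lambda^s$ acts on it like $2^{js}$ times a harmonless multiplier, and
\begin{align*}
\|\Lambda^sT_fg\|_{L^p}\lesssim\Big\|\Big(\sum_j|\dot S_{j-1}f|^2\,2^{2js}|\dot\Delta_jg|^2\Big)^{1/2}\Big\|_{L^p}\le\|\sup_j|\dot S_{j-1}f|\|_{L^{p_1}}\Big\|\Big(\sum_j2^{2js}|\dot\Delta_jg|^2\Big)^{1/2}\Big\|_{L^{p_2}} .
\end{align*}
The first factor is controlled through the Hardy–Littlewood maximal function by $\|f\|_{L^{p_1}}$, which is valid for $p_1\in(1,\infty]$. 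The second factor is $\simeq\|\Lambda^sg\|_{L^{p_2}}$ for $p_2\in(1,\infty)$. This is exactly where the hypotheses $p,p_2\in(1,\infty)$ are used, and it explains why $p_1=\infty$ is allowed. Symmetrically, $T_gf$ is bounded by $\|g\|_{L^{p_3}}\|\Lambda^sf\|_{L^{p_4}}$ with $p_4\in(1,\infty)$.

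\textbf{Remainder (main obstacle).} Here the Fourier supports are only balls of radius $\sim2^j$, not annuli, so the square function cannot be applied directly. We use $s>0$: write $\dot\Delta_k\Lambda^sR=\sum_{j\ge k-3}\dot\Delta_k\Lambda^s(\dot\Delta_jf\,\widetilde{\dot\Delta}_jg)$ and bound it pointwise by
\begin{align*}
\sum_{j\ge k-3}2^{(k-j)s}\,M\!\left(2^{js}\dot\Delta_jf\,\widetilde{\dot\Delta}_jg\right),
\end{align*}
where $M$ is the maximal function. The weight $2^{(k-j)s}$ is summable over $j\ge k-3$ precisely because $s>0$, so Young's inequality in $\ell^2$ applies. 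The Fefferman–Stein vector-valued maximal inequality then gives
\begin{align*}
\|\Lambda^sR\|_{L^p}\lesssim\Big\|\Big(\sum_j 2^{2js}|\dot\Delta_jf|^2\Big)^{1/2}\sup_j|\widetilde{\dot\Delta}_jg|\Big\|_{L^p}\lesssim\|\Lambda^sf\|_{L^{p_4}}\|g\|_{L^{p_3}} .
\end{align*}
The delicate point is justifying these maximal and vector-valued bounds uniformly for $p\in(1,\infty)$. If the endpoint $p_3=\infty$ causes trouble, we would fall back on the bilinear Coifman–Meyer multiplier theorem, as in the cited work of Kato–Ponce and Grafakos–Oh.

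Summing the three pieces yields the lemma.
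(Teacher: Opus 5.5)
Your proof is correct, but it follows a different route from the paper. The paper gives no argument for this lemma and simply imports it from \cite{kato1}. There the estimate is formulated for the inhomogeneous operator $(1-\Delta)^{s/2}$, and it is obtained, together with the commutator bound of Lemma \ref{lemma4}, by reducing to the Coifman--Meyer theory of bilinear pseudo-differential (multiplier) operators, used as a black box.

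You instead prove the homogeneous, mixed-exponent version exactly as stated. Your tools are only these:
\begin{itemize}
\item Bony's decomposition;
\item the square-function characterization $\|\Lambda^s h\|_{L^q}\simeq\|(\sum_j2^{2js}|\dot\Delta_jh|^2)^{1/2}\|_{L^q}$ for $1<q<\infty$;
\item the bounds $\sup_j|\dot S_{j-1}f|\lesssim Mf$ and $|\dot\Delta_k\Lambda^s h|\lesssim 2^{ks}Mh$;
\item the Fefferman--Stein vector-valued maximal inequality.
\end{itemize}
Your route buys transparency, and the ingredients are all standard. One sees that $p,p_2,p_4<\infty$ are needed only for square functions and Fefferman--Stein. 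The exponents $p_1,p_3$ may equal $\infty$ because they enter only through $M$. And $s>0$ is used only to make $\sum_{j\ge k-3}2^{(k-j)s}$ summable in the remainder. The Coifman--Meyer route buys generality: one bilinear-symbol framework delivers both this product estimate and the commutator estimate.

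Two minor remarks. First, your fallback for the endpoint $p_3=\infty$ is unnecessary. Fefferman--Stein is applied in $L^p$ with $p\in(1,\infty)$ before H\"older's inequality separates the factors, and $\|\sup_j|\widetilde{\dot\Delta}_jg|\|_{L^\infty}\lesssim\|g\|_{L^\infty}$ holds trivially. Second, your opening reduction to Schwartz functions is not literally a density argument when $p_1$ or $p_3$ equals $\infty$. You do not need it, though: the paraproduct argument runs directly for $f,g\in\mathcal{S}'_h$ with finite right-hand side, or via frequency truncation and Fatou's lemma.
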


\section{Global regularity and sharp decay rates of small solutions to CNS equations}  \label{section3}
In this section, we only prove the critical case with $s=\beta$ and $\beta\in(\frac 1 2,1)$. The case with $\beta=\frac{1}{2}$ is similar, so the details are left to the interested reader. For the sake of simplicity, all occurrences of $C_2$ denote any positive power of 
 $C_2$ throughout this paper. First, we prove the global regularity for system (\ref{eq1}) in critical Sobolev space. 

\textbf{Proof of Theorem \ref{1theo1}:}
\begin{proof}
 We assume that $(a,u)$ be a local strong solution of (\ref{eq1}). Before proceeding any further, we assume a
priori that
\begin{align}\label{301}
\|(a,u)\|_{H^{{\beta}}}\le \delta\ll 1.
\end{align} 
Then by Sobolev's inequality, we can deduce
\begin{align*}
\frac{1}{2}\le \rho\le2,
\end{align*}
which gives rise to
\begin{align*}
\frac{1}{4}\|u\|_{L^2}^2\le\frac{1}{2}\int_{\mathbb{R}}\rho u^2dx\le \|u\|_{L^2}^2.
\end{align*}
From the definition of $G(\rho)$ and the fact $\|a\|_{L^{\infty}}$ is small enough, one can derive 
\begin{align*}
c\|a\|_{L^{2}}^{2}\le\int_{\mathbb{R}} G(\rho)dx\le C\|a\|_{L^{2}}^{2}.
\end{align*}
Therefore, combining the above two lines, we get
\begin{align}\label{302}
c\|(a,u)\|_{L^{{2}}}^{2}\le E_{0}\le C\|(a,u)\|_{L^{{2}}}^{2}.
\end{align}
Taking the $L^2$ inner product of the first two equations of (\ref{eq0}) with $(G^{\prime}(\rho),u)$ and integrating by parts, we get
\begin{align}\label{303}
\frac{d}{dt}\int_{\mathbb{R}}\left(G(\rho)+\frac{1}{2}\rho u^2\right)dx+\|\Lambda^{\beta}u\|_{L^2}^{2}=0.
\end{align}
Applying $\Lambda^{\beta}$ to (\ref{eq1}), and taking the $L^2$ inner product of the first two equations of (\ref{eq1}) with $(\gamma\Lambda^{\beta}a,\Lambda^{\beta} u)$ and integrating by parts, by Lemma \ref{lemma6} and \ref{lemma5}, we obtain
 \begin{align}\label{304}
&\frac{1}{2}\frac{d}{dt}\|\sqrt{\gamma} \Lambda^{\beta}a\|_{L^2}^{2}+\gamma\langle\Lambda^{\beta}u_x,\Lambda^{\beta}a\rangle\\\nonumber
=&-\gamma\langle\Lambda^{\beta}(au)_{x},\Lambda^{\beta}a\rangle\\\nonumber
=&-\gamma\langle[\Lambda^{\beta},u\partial_x]a,\Lambda^{\beta}a\rangle-\gamma\langle u \Lambda^{\beta}a_x,\Lambda^{\beta}a\rangle-\gamma\langle\Lambda^{\beta}(au_{x}),\Lambda^{\beta}a\rangle\\\nonumber
=&-\gamma\langle[\Lambda^{\beta},u\partial_x]a,\Lambda^{\beta}a\rangle+\frac{\gamma}{2}\langle u_x, |\Lambda^{\beta}a|^{2}\rangle-\gamma\langle\Lambda^{2\beta-1}(au_{x}),\Lambda a\rangle\\\nonumber
\le\ &C\|\Lambda^{\beta} a\|_{L^{\frac{1}{\beta-\frac{1}{2}}}}\|\Lambda^{\beta} a\|_{L^{2}}\|u_x\|_{L^{\frac{1}{1-\beta}}}+C\|\Lambda^{2\beta}u \|_{L^{2}}\|\Lambda a\|_{L^{2}}\|a\|_{L^{\infty}}+C\|u_x\|_{L^{\frac{1}{1-\beta}}}\|\Lambda a\|_{L^{2}}\|\Lambda^{2\beta-1}a\|_{L^{\frac{1}{\beta-\frac{1}{2}}}}\\\nonumber
\le\ &C\|\Lambda a\|_{L^{2}}\|\Lambda^{\beta} a\|_{L^{2}}\|\Lambda^{\beta+\frac{1}{2}} u\|_{L^{2}}+C\|\Lambda a\|_{L^{2}}\| a\|_{L^{\infty}}\|\Lambda^{2\beta}u\|_{L^{2}}\\\nonumber
\le\ &C\delta \mathscr{D}_{0},
 \end{align}
 and
\begin{align}\label{305}
&\frac{1}{2}\frac{d}{dt}\|\Lambda^{\beta}u\|_{L^2}^{2}+\gamma\langle\Lambda^{\beta}a_x,\Lambda^{\beta}u\rangle+\|\Lambda^{2\beta}u\|_{L^2}^2\\\nonumber
=&\langle \Lambda^{\beta} (k(a)a_x),\Lambda^{\beta}u\rangle-\langle \Lambda^{\beta}(uu_x),\Lambda^{\beta}u\rangle+\left\langle\Lambda^{\beta}\left(\frac{a}{1+a}(-\Delta)^{\beta}u\right),\Lambda^{\beta} u\right\rangle\\\nonumber
\le\ & C\|\Lambda^{2\beta}u\|_{L^2}\|a_x\|_{L^2}\|a\|_{L^\infty}+C\|\Lambda^{2\beta}u\|_{L^2}\| u_x\|_{L^2}\|u\|_{L^\infty}+C\|\Lambda^{2\beta}u\|_{L^2}^{2}\|a\|_{L^\infty}\\\nonumber
\le\ & C\delta \mathscr{D}_{0}.
\end{align}

Now we use the inner product between $a$ and $u$ to generate the dissipation of $a$. we choose $k>0$, which will be determined later. 
By virtue of Sobolev's inequality,  we arrive at
\begin{align}\label{306}
&\frac{d}{dt}\langle u,k\Lambda^{-2+2 \beta} a_x\rangle+k\gamma\|\Lambda^{\beta} a\|_{L^{2}}^{2} \\ \nonumber
=&k\langle\Lambda^{2\beta-2}u_x,u_x\rangle+k\langle\Lambda^{2\beta-2}u_x,(au)_x\rangle-k\langle(-\Delta)^{\beta}u,\Lambda^{2\beta-2}a_x\rangle\\\nonumber
&+k\langle k(a)a_x,\Lambda^{2\beta-2}\nabla a\rangle+k\left\langle-uu_x+\frac{a}{1+a}(-\Delta)^{\beta}u,\Lambda^{2\beta-2}a_x\right\rangle\\\nonumber
\le\ & Ck\|\Lambda^{\beta}u\|_{L^{2}}^{2}+Ck\|\Lambda^{2\beta-1}u\|_{L^{\frac{1}{\beta-\frac{1}{2}}}}\| (a_x,u_x)\|_{L^2}\|(a,u)\|_{L^\frac{1}{1-\beta}}+Ck\|\Lambda^{3\beta-1}u\|_{L^{2}}\|\Lambda^{\beta}a\|_{L^{2}}\\\nonumber
&+Ck\|\Lambda^{2\beta-1}a\|_{L^{\frac{1}{\beta-\frac{1}{2}}}}\| (a_x,u_x)\|_{L^2}\|(a,u)\|_{L^\frac{1}{1-\beta}}+Ck\|\Lambda^{2\beta-1}a\|_{L^{\frac{1}{\beta-\frac{1}{2}}}}\|\Lambda^{2\beta} u\|_{L^2}\|a\|_{L^\frac{1}{1-\beta}}\\\nonumber
\le\ & Ck\|\Lambda^{\beta}u\|_{L^{2}}^{2}+k\|\Lambda^{\beta}(a,u)\|_{L^{2}}\| (a_x,u_x)\|_{L^2}\|\Lambda^{\beta-\frac{1}{2}}(a,u)\|_{L^2}+Ck\|\Lambda^{3\beta-1}u\|_{L^{2}}\|\Lambda^{\beta}a\|_{L^{2}}\\\nonumber
&+Ck\|\Lambda^{\beta}a\|_{L^{2}}\|\Lambda^{2\beta} u\|_{L^2}\|\Lambda^{\beta-\frac{1}{2}}a\|_{L^2}\\\nonumber
\le &C \delta \mathscr{D}_{0}+\frac{k}{100}\|\Lambda^{\beta}a\|_{L^2}^{2}+Ck\left(\|\Lambda^{3\beta-1}u\|_{L^2}^{2}+\|\Lambda^{\beta}u\|_{L^2}^{2}\right).
\end{align}
Along the same line, due to H\"{o}lder's inequality, we find
\begin{align}\label{307}
&\frac{d}{dt}\langle a_x,ku\rangle+k\gamma\|a_x\|_{L^2}^2\\\nonumber
=& k\langle u_x,u_x\rangle+k\langle(au)_{x},u_x\rangle-k\langle(-\Delta)^{\beta}u,a_x\rangle\\\nonumber
&+k\langle K(a)a_x,a_x\rangle-k\langle uu_x,a_x\rangle+k\left\langle \frac{a}{1+a}(-\Delta)^{\beta}u,a_x\right\rangle\\\nonumber
\le\ &Ck\|u_x\|_{L^2}^2+Ck\|\Lambda^{2\beta}u\|_{L^2}\|\Lambda a\|_{L^2}\\\nonumber
&+ Ck\|\Lambda (a,u)\|_{L^{2}}^{2}\|(a,u)\|_{L^{\infty}}+Ck\|\Lambda a\|_{L^{2}}\|a\|_{L^{\infty}}\|\Lambda^{2\beta}u\|_{L^{2}}\\\nonumber
\le\ &C\delta \mathscr{D}_{0}+Ck\|\Lambda^{\beta}u\|_{H^{\beta}}^{2}+\frac{k}{100}\|\Lambda a\|_{L^2}.
\end{align}
Combining (\ref{303})-(\ref{307}), we conclude that for any $k>0$
\begin{align*}
&\frac{d}{dt}\left(\int_{\mathbb{R}}\left(2G(\rho)+\rho u^2\right)dx+\|(\sqrt{\gamma} a,u)\|_{\dot{H}^{\beta}}^{2}+2k\langle \Lambda^{\beta-1}a_x,\Lambda^{\beta-1}u\rangle_{H^{1-\beta}}\right)\\
&+\left(2-Ck\right)\|\Lambda^{\beta}u\|_{H^{\beta}}^{2}+\left(2k\gamma-\frac{k}{50}\right)\|\Lambda^{\beta}a\|_{H^{1-\beta}}^{2}
\le C\delta \mathscr{D}_{0}.
\end{align*}
Choosing $k$ small enough, we obtain
\begin{align}\label{309}
\frac{d}{dt}\mathscr{E}_0+\mathscr{D}_0\le 0.
\end{align}
By $\frac{1}{2}<\beta<1$, we have 
\begin{align*}
2k\langle\Lambda^{\beta-1}a_x,\Lambda^{\beta-1}u\rangle_{H^{1-\beta}}\le &\ 2k\|a\|_{H^{\beta}}\|u\|_{H^{1-\beta}}\\ \nonumber
\le &\ 2k\|a\|_{H^{\beta}}\|u\|_{H^{{\beta}}}\\ \nonumber
\le &\ \frac{k}{2}\|a\|_{H^{\beta}}^{2}+2k\|u\|_{H^{{\beta}}}^{2}.\nonumber
\end{align*}
Taking $k$ small enough and integrating (\ref{309}) in time on $[0,t]$, then we conclude that
\begin{align*}
\sup_{t}\|(a,u)\|_{H^{\beta}}^{2}+\int_{0}^{t}\mathscr{D}_{0}(t^{\prime})dt^{\prime}\le C\|(a_0,u_0)\|_{H^{\beta}}^{2}\le \frac{\delta^2}{4}
\end{align*}
holds for small enough $\delta>0$. Then we achieve the conclusion.
\end{proof}

We are now in a position to derive the time decay rates of solutions to the CNS equations (\ref{eq1}).
\begin{prop}{\label{3prop2}}
  Under the same conditions as in Theorem \ref{1theo1}, if additionally $\left(a_{0}, u_{0}\right) \in \dot{B}_{2, \infty}^{-\frac{1}{2}}$, and there exists a small constant $\delta $ such that if 
  \begin{align}\label{310}
\|(a_{0}, u_{0})\|_{\dot{B}^{-\frac{1}{2}}_{2,\infty}}\le\delta,
  \end{align}
  then there exists $C>0$ such that for every $t>0$, there holds
\begin{align*}
\|(a,u)\|_{\dot{H}^{s_1}}\le C(1+t)^{-\frac{1+2s_1}{4\beta}},
\end{align*}
where $0\le s_1\le \beta$.
\end{prop}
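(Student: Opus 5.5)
We prove Proposition \ref{3prop2} in the critical case $s=\beta$, $\frac12<\beta<1$, by a continuity (bootstrap) argument combining Theorem \ref{1theo1}, the Fourier splitting method and propagation of negative Besov regularity. Set
\begin{align*}
X(t)\overset{\mathrm{def}}{=}\sup_{0\le\tau\le t}\Big((1+\tau)^{\frac{1}{4\beta}}\|(a,u)(\tau)\|_{L^{2}}+(1+\tau)^{\frac{1}{4\beta}+\frac12}\|\Lambda^{\beta}(a,u)(\tau)\|_{L^{2}}\Big),
\end{align*}
and assume a priori that $X(t)\le C_0\delta$ on $[0,T]$, as in (\ref{1302}). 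The goal is to improve this to $X(t)\le \frac{C_0\delta}{2}$. The intermediate range $0<s_1<\beta$ then follows by interpolating between $L^2$ and $\dot H^\beta$.

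\textbf{Step 1: propagation of negative regularity.} I will show $\sup_{t\le T}\|(a,u)\|_{\dot B^{-\frac12}_{2,\infty}}\le C\delta+C(C_0\delta)^2$. Apply $\dot\Delta_j$ to (\ref{eq1}) and perform the $L^2$ energy estimate on each block, using the symmetrizer already used in (\ref{306}); the linear part is dissipative. Multiply by $2^{-j/2}$ and take the supremum in $j$. The nonlinear terms $(au)_x$, $K(a)a_x$, $uu_x$ and $\frac{a}{1+a}(-\Delta)^\beta u$ are estimated in $\dot B^{-\frac12}_{2,\infty}$ through the embedding $L^1\hookrightarrow \dot B^{-\frac12}_{2,\infty}$. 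For example,
\begin{align*}
\|(au)_x\|_{\dot B^{-\frac12}_{2,\infty}}\lesssim \|(a,u)\|_{L^2}\|\Lambda(a,u)\|_{L^2},
\end{align*}
and $\|\Lambda(a,u)\|_{L^2}$ is interpolated between $\Lambda^\beta$ and the dissipation $\|\Lambda^{2\beta}u\|_{L^2}$, $\|\Lambda a\|_{L^2}$ contained in $\mathscr{D}_0$. The bootstrap bounds combined with $\int_0^\infty\mathscr{D}_0\,dt\le C\delta^2$ must make these time integrals finite. A typical integrand is $(1+t)^{-\frac1{4\beta}}\mathscr{D}_0^{1/2}$. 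Cauchy--Schwarz against a time weight, using the weighted dissipation obtained by multiplying (\ref{309}) by $(1+t)^{\frac{1}{2\beta}+\frac12-\epsilon}$, should close the estimate. \emph{This step is the main obstacle.} In 1D the $L^1$-type product gains only $(1+t)^{-\frac{1}{2\beta}-\dots}$, so the exponents are borderline. I expect to need the full weighted integrability $\int_0^t(1+\tau)^{\frac{1}{2\beta}+\frac12}\|\Lambda^\beta(a,u)\|_{L^2}^2\,d\tau$ supplied by (\ref{1302}) itself.

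\textbf{Step 2: Fourier splitting for the $L^2$ level.} In (\ref{309}), split the dissipation at the frequency $|\xi|^{2\beta}\le \frac{C_2}{1+t}$. This gives
\begin{align*}
\frac{d}{dt}\mathscr{E}_0+\frac{C_2}{1+t}\mathscr{E}_0\le \frac{C}{1+t}\int_{|\xi|\le (C_2/(1+t))^{1/(2\beta)}}|(\hat a,\hat u)|^2\,d\xi\le C(1+t)^{-1-\frac{1}{2\beta}}\sup_\tau\|(a,u)\|_{\dot B^{-\frac12}_{2,\infty}}^2 .
\end{align*}
Here the low-frequency integral is bounded dyadically by $\sum_{2^j\lesssim (1+t)^{-1/(2\beta)}}2^{j}\|\dot\Delta_j(a,u)\|_{L^2}^2 2^{-j}$. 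We also use that $\mathscr{E}_0$ is equivalent to $\|(a,u)\|_{H^\beta}^2$, which rests on (\ref{302}) and smallness of $k$. Multiplying by $(1+t)^{C_2}$ with $C_2>\frac1{2\beta}$ and integrating gives $\|(a,u)\|_{L^2}\le C(\delta+C_0^2\delta^2)(1+t)^{-\frac1{4\beta}}$.

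\textbf{Step 3: the $\dot H^\beta$ level.} Repeat the splitting on the $\dot H^\beta$ part of (\ref{304})--(\ref{305}), with the symmetrizer term at level $\beta$. Its low-frequency part is controlled by $(1+t)^{-1}\|(a,u)\|_{L^2}^2$ times $(1+t)^{-1}$ through the frequency cutoff. Its nonlinear remainders are bounded by $C\delta\mathscr{D}_0$ and absorbed. This yields $\|\Lambda^\beta(a,u)\|_{L^2}\le C(\delta+C_0^2\delta^2)(1+t)^{-\frac1{4\beta}-\frac12}$, since $\frac{1+2\beta}{4\beta}=\frac{1}{4\beta}+\frac12$. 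Finally choose $C_0=4C$ and $\delta$ small to obtain $X\le\frac{C_0\delta}{2}$, and conclude by continuity.
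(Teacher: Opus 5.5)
Your overall architecture matches the paper: the ansatz (\ref{1302}), propagation of $\dot B^{-\frac12}_{2,\infty}$ through $L^1\hookrightarrow\dot B^{-\frac12}_{2,\infty}$, and Fourier splitting at the $L^2$ level. Your Step 2 is correct, and slightly more direct than the paper's (\ref{315})--(\ref{317}), because you bound $\int_{S(t)}|(\widehat a,\widehat u)|^2d\xi$ straight from the propagated Besov norm.

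\textbf{Step 1: the weight is inadmissible.} Multiplying (\ref{309}) by $(1+t)^{\sigma}$ leaves $\sigma(1+t)^{\sigma-1}\mathscr{E}_0$ on the right. The ansatz only gives $\mathscr{E}_0\lesssim C_0^2\delta^2(1+t)^{-\frac{1}{2\beta}}$, so $\int_0^t(1+\tau)^{\sigma}\mathscr{D}_0\,d\tau$ is controlled uniformly in $t$ only when $\sigma<\frac{1}{2\beta}$. For your $\sigma=\frac{1}{2\beta}+\frac12-\epsilon$ the bound grows like $(1+t)^{\frac12-\epsilon}$. Likewise, $\int_0^t(1+\tau)^{\frac{1}{2\beta}+\frac12}\|\Lambda^{\beta}(a,u)\|_{L^2}^2d\tau$ is only $O((1+t)^{\frac12})$ under (\ref{1302}). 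Hence your Cauchy--Schwarz bound on $\int_0^t(1+\tau)^{-\frac{1}{4\beta}}\mathscr{D}_0^{1/2}d\tau$ is not uniform in $t$.

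The correct choice, as in (\ref{3102})--(\ref{312}), is $1-\frac{1}{2\beta}<\sigma<\frac{1}{2\beta}$. The upper bound gives $\int_0^t(1+\tau)^\sigma\mathscr{D}_0\,d\tau\le CC_0^2\delta^2$, and the lower bound makes $\int_0^\infty(1+\tau)^{-\frac{1}{2\beta}-\sigma}d\tau$ finite. The window is nonempty exactly because $\beta<1$.

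\textbf{Step 3: this is the real gap; it does not close as described.} In Fourier splitting every remainder must decay like $(1+t)^{-1}$ times the target rate, i.e.\ like $(1+t)^{-\frac{1}{2\beta}-2}$. Two things prevent this.

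First, the symmetrizer. The one available at this level is $k\langle a_x,u\rangle$ from (\ref{307}). A $\Lambda^\beta$-shifted version of (\ref{306}) has symbol $|\xi|^{4\beta-1}$ and is not controlled by the $\dot H^\beta$ energy at high frequencies. The symbol $|\xi|$ of $\langle a_x,u\rangle$ dominates $|\xi|^{2\beta}$ at low frequencies, so the modified energy is not comparable to $\|\Lambda^\beta(a,u)\|_{L^2}^2$. The resulting mismatch $(1+t)^{-1}|\langle a_x,u\rangle|$ can only be bounded by $(1+t)^{-1-\frac1\beta}$, since $\|\Lambda^{\frac12}(a,u)\|_{L^2}^2$ decays like $(1+t)^{-\frac1\beta}$. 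That yields only $(1+t)^{-\frac1\beta}$, which is slower than $(1+t)^{-1-\frac1{2\beta}}$ when $\beta>\frac12$.

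Second, the remainders you want to ``absorb'' cannot be absorbed. These are the linear term $Ck\|u_x\|_{L^2}^2$ in (\ref{307}) and the nonlinear terms bounded by $C\delta\mathscr{D}_0$ in (\ref{304})--(\ref{305}). The $\dot H^\beta$-level dissipation $\|\Lambda^{2\beta}u\|_{L^2}^2+k\gamma\|a_x\|_{L^2}^2$ does not control them. They need the $L^2$-level part $k\gamma\|\Lambda^\beta a\|_{L^2}^2+\|\Lambda^\beta u\|_{L^2}^2$ of $\mathscr{D}_0$. Without an extra factor $(1+t)^{-1}$ in front, integrating them against the splitting weight returns only $(1+t)^{-\frac{1}{2\beta}}$.

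\textbf{The missing idea is the paper's time-weighted argument.}
\begin{enumerate}
\item Feed the $L^2$ rate from Step 2 back into (\ref{309}) with weight $(1+t)^{\frac{1}{2\beta}+1}$ to obtain (\ref{322}): $\frac{1}{1+t}\int_0^t(1+\tau)^{\frac{1}{2\beta}+1}\mathscr{D}_0\,d\tau\le C$.
\item Use the pure $\dot H^\beta$ energy $\|\Lambda^\beta(\sqrt\gamma a,u)\|_{L^2}^2$, with no cross term. Its nonlinear terms are $\lesssim(1+t)^{-1}\mathscr{D}_0$ thanks to the ansatz decay of $\|(a,u)\|_{L^2}$, $\|\Lambda^\beta a\|_{L^2}$ and $\|(a,u)\|_{L^\infty}$; see (\ref{323})--(\ref{325}).
\item Multiply by $(1+t)^{\frac{1}{2\beta}+2}$. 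The weight-derivative term satisfies $(1+t)^{\frac{1}{2\beta}+1}\|\Lambda^\beta(a,u)\|_{L^2}^2\lesssim(1+t)^{\frac{1}{2\beta}+1}\mathscr{D}_0$, and both it and the nonlinear contribution are controlled by (\ref{322}).
\end{enumerate}
Thus the missing $a$-dissipation at the $\dot H^\beta$ level comes from the $L^2$-level part of $\mathscr{D}_0$ through time integration, not from a symmetrizer.
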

\begin{proof}
In what follows, we divide the proof into the following steps:

\textbf{Step1:}
We apply the
bootstrap argument to establish the desired decay estimates. We start with the ansatz:
\begin{align}\label{3101}
(1+t)^{\frac{1}{4\beta}}\|(a,u)\|_{L^{2}}+(1+t)^{\frac{1}{4\beta}+\frac{1}{2}}\|\Lambda^{\beta}(a,u)\|_{L^{2}}\le C_{0}\delta,
\end{align}
where $C_0$ is a large constant and will be determined later. For any $\sigma<\frac{1}{2\beta}$, due to (\ref{309}), we have
\begin{align*}
(1+t)^{\sigma}\frac{d}{dt}\mathscr{E}_0+(1+t)^{\sigma}\mathscr{D}_{0}\le 0,
\end{align*}
together with (\ref{3101}) ensures that
\begin{align}\label{3102}
  (1+t)^{\sigma}\mathscr{E}_0+\int_{0}^{t}(1+t^{\prime})^{\sigma}\mathscr{D}_{0}dt^{\prime}\le C\delta^{2}+\sigma\int_{0}^{t}(1+t^{\prime})^{\sigma-1}\mathscr{E}_{0}dt^{\prime}\le CC_{0}^{2}\delta^{2}.
\end{align}
We next prove the solutions of (\ref{eq1}) belong to some negative index Besov space. Applying $\dot\Delta_{j}$ to (\ref{eq1}), we find
\begin{align}\label{3105}
 \left\{\begin{array}{l}\dot{\Delta}_{{j}}a_{t}+\dot{\Delta}_{j}u_x=\dot{\Delta}_{j}F,\\ [1ex] \dot{\Delta}_{j}u_{t}+(-\Delta)^{\beta}\dot{\Delta}_{j}u+\gamma\dot{\Delta}_{j}a_x=\dot{\Delta}_{j}H,\end{array}\right.
\end{align}
where $F=-(au)_x$ and $H=K(a)a_x-uu_x+\frac{a}{1+a}(-\Delta)^{\beta}u$. Then we get
\begin{align}\label{311}
\frac{d}{dt}\left(\gamma\|\dot{\Delta}_{j} a\|_{L^{2}}^{2}+\|\dot{\Delta}_{j}u\|_{L^{2} }^{2}\right)+2\|\Lambda^{\beta}\dot{\Delta}_{j}u\|_{L^{2}}^{2}\le C\left(\|\dot{\Delta}_{j}F\| _{L^{2}}\|\dot{\Delta}_{j}a\|_{L^{2}}+\|\dot{\Delta}_{j}H\|_{L^{2}}\|\dot{ \Delta}_{j}u\|_{L^{2}}\right).
\end{align}
Multiplying (\ref{311}) by $2^{-j}$ and taking $l^{\infty}$ norm, we have
\begin{align}\label{3111}
\frac{d}{dt}\left(\gamma\|a\|_{\dot{B}^{-\frac{1}{2}}_{2,\infty}}^{2}+\|u\|_{\dot{B}^{-\frac{1}{2}}_{2,\infty}}^{2}\right)\le C \left(\|F\|_{\dot{B}^{-\frac{1}{2}}_{2,\infty}}\|a\|_{ \dot{B}^{-\frac{1}{2}}_{2,\infty}}+\|H\|_{\dot{B}^{-\frac{1}{2}}_{2,\infty}}\|u\|_{ \dot{B}^{-\frac{1}{2}}_{2,\infty}}\right).  
\end{align}
Let $\displaystyle M(t)=\sup_{0\le t^{\prime}\le t}\left(\gamma\|a\|_{\dot{B}^{-\frac{1}{2}}_{2,\infty}}+\|u\|_{\dot{B}^{-\frac{1}{2}}_{2,\infty}}\right)$, then we yield
\begin{align*}
M^{2}(t)\leq CM^{2}(0)+CM(t)\int_{0}^{t}\left(\|F\|_{\dot{B}^{-\frac{1}{2}}_{2,\infty}}+\|H\|_{\dot{B}^{-\frac{1}{2}}_{2,\infty}}\right)dt^{\prime}. 
\end{align*}
Using the fact that $ L^{1}\hookrightarrow\dot{B}^{-\frac{1}{2}}_{2,\infty}$, (\ref{3101}) and (\ref{3102}) , we conclude, for any $t> 0$ and $1-\frac{1}{2\beta}<\sigma<\frac{1}{2\beta}$, that
\begin{align}\label{312}
\int_{0}^{t}\left(\|F\|_{\dot{B}^{-\frac{1}{2}}_{2,\infty}}+\|H\|_{\dot{B}^{-\frac{1}{2}}_{2,\infty}}\right)dt^{\prime}&\le C\int_{0}^{t}\left(\|F\|_{L^1}+\|H\|_{L^1}\right)dt^{\prime}\\\nonumber
&\le   CC_{0}\delta\int_{0}^{t}(1+t^{\prime})^{-\frac{1}{4\beta}}\mathscr{D}_{0}^{\frac{1}{2}}dt^{\prime}\\\nonumber
&\le   CC_{0}\delta\left(\int_{0}^{t}(1+t^{\prime})^{-\frac{1}{2\beta}-\sigma}dt^{\prime}\right)^{\frac{1}{2}}\left(\int_{0}^{t}(1+t^{\prime})^{\sigma}\mathscr{D}_{0}dt^{\prime}\right)^{\frac{1}{2}}\le CC_{0}^{2}\delta^{2},
\end{align}
which together with (\ref{310}) ensures that $M(t)\le CC_{0}^{2}\delta^{2}+C\delta$. 

\textbf{Step2:} Denote $S(t)=\left\{\xi:|\xi|^{2\beta}\le C_{2}(1+t)^{-1}\right\}$, $C_{2}$ is large enough.  Due to (\ref{309}), we infer that
  \begin{align}\label{313}
  \frac{d}{dt}E_{0}(t)+\frac{C_{2}}{1+t}\left(k\gamma\|a\|_{H^{\beta}}^{2}+\|u\|_{H^{\beta}}^{2}\right)\leq \frac{C}{1+t}{\int_{S(t)}|\widehat{a}(\xi)|^{2}+|\widehat u}(\xi)|^{2}d\xi.
  \end{align}  
  Applying
Fourier transform to (\ref{eq1}), we have
\begin{align*}
\left\{\begin{array}{l}\widehat{a}_{t}-i\xi\widehat{u}=\widehat{F},\\ [1ex]
 \widehat{u}_{t}+|\xi|^{2\beta}\widehat{u}+i\gamma\xi\widehat{a}=\widehat{H}.\end{array}\right.
\end{align*}
Then we deduce
\begin{align}\label{314}
\frac{1}{2}\frac{d}{dt}\left(\gamma|\widehat{a}|^{2}+|\widehat{u}|^{2}\right)+|\xi|^{2\beta}|\widehat{u}|^{2}=\gamma\mathcal{R}e[\widehat{F} \cdot\overline{\widehat{a}}]+\mathcal{R}e[\widehat{H}\cdot\overline{\widehat{u}}]
\end{align}
Integrating (\ref{314}) in time on $[0,t]$, we get
\begin{align}\label{315}
\gamma|\widehat{a}|^{2}+|\widehat{u}|^{2}\le C\left(|\widehat{a}_{0}|^{2}+|\widehat{u}_{0}|^{2}\right)+C\int_{0}^{t}|\widehat{F}\cdot\overline{\widehat{a}}|+|\widehat{H}\cdot\overline{\widehat{u}}|dt^{\prime}.
\end{align}
Due to the fact $E(0)<\infty$ and (\ref{310}), we find
\begin{align}\label{316}
\int_{S(t)}\left(|\widehat{a_{0}}|^{2}+|\widehat{u_{0}}|^{2}\right)d\xi &\le\sum_{j\leq\log_{2}\left[\frac{4}{3}C_{2}^{\frac{1}{2\beta}}(1+t)^{-\frac{1}{2\beta}}\right]}\int_{\mathbb{R}^{2}}2\varphi^{2}(2^{-j}\xi)\left(|\widehat{a_{0}}|^{2}+|\widehat{u_{0}}|^{2}\right)d\xi \\ \nonumber
 & \le\sum_{j\leq\log_{2}\left[\frac{4}{3}C_{2}^{\frac{1}{2\beta}}(1+t)^{-\frac{1}{2\beta}}\right]}2\left(\|\dot{\Delta}_{j}a_{0}\|_{L^{2}}^{2}+\|\dot{\Delta}_{j}u_{0}\|_{L^{2}}^{2}\right) \\ \nonumber
 & \le C\delta^{2}(1+t)^{-\frac{1}{2\beta}}.\nonumber
\end{align}
By (\ref{3101}), we arrive at
\begin{align}\label{317}
\int_{S(t)}\int_{0}^{t}|\widehat{F}\cdot\overline{\widehat{a}}|+|\widehat{H}\cdot\overline{\widehat{u}}|dt^{\prime}d\xi=& \int_{0}^{t}\int_{S(t)}|\widehat{F}\cdot\overline{\widehat{a}}|+|\widehat{H}\cdot\overline{\widehat{u}}|d\xi dt^{\prime} \\\nonumber
\le & C\int_{0}^{t}\left(\|F\|_{L^1}\int_{S(t)}|{\widehat{a}}|d\xi+\|H\|_{L^1}\int_{S(t)}|{\widehat{u}}|d\xi\right
) dt^{\prime} \\\nonumber
\le & \ C(1+t)^{-\frac{1}{4\beta}}\int_{0}^{t}\left[\left(\|F\|_{L^1}+\|H\|_{L^1}\right)\left(\int_{S(t)}|\widehat{a}|^{2}+|\widehat{u}|^{2}d\xi\right)^{\frac{1}{2}}\right]dt^{\prime}\\\nonumber
\le& CC_{0}^{2}\delta^{3}(C_{0}^{2}\delta+1)(1+t)^{-\frac{1}{2\beta}},\nonumber
\end{align}
where we use the fact
\begin{align*}
\left(\int_{S(t)}|\widehat{a}|^{2}+|\widehat{u}|^{2}d\xi\right)^{\frac{1}{2}}dt^{\prime}\le C(C_{0}^{2}\delta+1)(1+t)^{-\frac{1}{4\beta}}.
\end{align*}
Plugging (\ref{316}) and (\ref{317}) into (\ref{313}) gives
\begin{align*}
 \frac{d}{dt}\mathscr{E}_{0}(t)+\frac{C_{2}}{1+t}\left(k\gamma\|a\|_{H^{\beta}}^{2}+\|u\|_{H^{\beta}}^{2}\right)\leq C\left[C_{0}^{2}\delta^{3}(C_{0}^{2}\delta+1)+\delta^{2}\right](1+t)^{-\frac{1}{2\beta}-1},
\end{align*}
so that there holds
\begin{align}\label{318}
 \mathscr{E}_{0}(t)\le  C\left[C_{0}^{2}\delta^{3}(C_{0}^{2}\delta+1)+\delta^{2}\right](1+t)^{-\frac{1}{2\beta}}.
  \end{align}
  
\textbf{Step3:}
  By (\ref{318}), we get
\begin{align}\label{3181}
  \frac{d}{dt}\left[(1+t)^{\frac{1}{2\beta}+1}\mathscr{E}_{0}\right]+(1+t)^{\frac{1}{2\beta}+1}\mathscr{D}_{0}\le C\left[C_{0}^{2}\delta^{3}(C_{0}^{2}\delta+1)+\delta^{2}\right](1+t)^{\frac{1}{2\beta}}\mathscr{E}_{0}.
  \end{align}
  Integrating (\ref{3181}) over $[0,t]$, we find
    \begin{align*}
(1+t)^{\frac{1}{2\beta}+1}\mathscr{E}_{0}+\int_{0}^{t}(1+t^{\prime})^{\frac{1}{2\beta}+1}\mathscr{D}_{0}dt^{\prime}\le C\left[C_{0}^{2}\delta^{3}(C_{0}^{2}\delta+1)+\delta^{2}\right](1+t),
  \end{align*}
  which ensures that
  \begin{align}\label{322}
  \frac{1}{1+t}\int_{0}^{t}(1+t^{\prime})^{\frac{1}{2\beta}+1} \mathscr{D}_{0}dt^{\prime}\le C\left[C_{0}^{2}\delta^{3}(C_{0}^{2}\delta+1)+\delta^{2}\right].
  \end{align}
  Thanks to Lemma \ref{lemma3}, (\ref{304}) and (\ref{3101}), we have
   \begin{align}\label{323}
&\frac{1}{2}\frac{d}{dt}\|\sqrt{\gamma} \Lambda^{\beta}a\|_{L^2}^{2}+\gamma\langle\Lambda^{\beta}u_x,\Lambda^{\beta}a\rangle\\\nonumber
\le\ &C\|\Lambda a\|_{L^{2}}\|\Lambda^{\beta} a\|_{L^{2}}\|\Lambda^{\beta+\frac{1}{2}} u\|_{L^{2}}+C\|\Lambda a\|_{L^{2}}\| a\|_{L^{\infty}}\|\Lambda^{2\beta}u\|_{L^{2}}\\\nonumber
\le\ &C\|\Lambda a\|_{L^{2}}\|\Lambda^{\beta} a\|_{L^{2}}\|u\|_{L^{2}}^{\frac{\beta-\frac{1}{2}}{2\beta}}\|\Lambda^{2\beta} u\|_{L^{2}}^{\frac{\beta+\frac{1}{2}}{2\beta}}+C\|\Lambda a\|_{L^{2}}\| a\|_{L^{\infty}}\|\Lambda^{2\beta}u\|_{L^{2}}\\\nonumber
\le\ &\frac{1}{100}\|\Lambda^{2\beta} u\|_{L^{2}}^{2}+C\|\Lambda a\|_{L^{2}}^{\frac{4\beta}{3\beta-\frac{1}{2}}}\|\Lambda^{\beta} a\|_{L^{2}}^{\frac{4\beta}{3\beta-\frac{1}{2}}}\|u\|_{L^{2}}^{\frac{2\beta-1}{3\beta-\frac{1}{2}}}+C\|\Lambda a\|_{L^{2}}^{2}\| a\|_{L^{\infty}}^{2}\\\nonumber
\le\ &\frac{1}{100}\|\Lambda^{2\beta} u\|_{L^{2}}^{2}+C\left(\|\Lambda^{\beta} a\|_{L^{2}}^{\frac{2\beta+1}{3\beta-\frac{1}{2}}}\|u\|_{L^{2}}^{\frac{2\beta-1}{3\beta-\frac{1}{2}}}+\|a\|_{L^{\infty}}^{2}\right) \mathscr{D}_{0}\\\nonumber
\le\ &\frac{1}{100}\|\Lambda^{2\beta} u\|_{L^{2}}^{2}+CC_{0}^{2}\delta^{\frac{8\beta}{6\beta-1}}(1+t)^{-\frac{2\beta+3}{6\beta-1}}\mathscr{D}_{0},\nonumber
 \end{align}
 and
 \begin{align}\label{324}
&\frac{1}{2}\frac{d}{dt}\|\Lambda^{\beta}u\|_{L^2}^{2}+\gamma\langle\Lambda^{\beta}a_x,\Lambda^{\beta}u\rangle+\|\Lambda^{2\beta}u\|_{L^2}^2\\\nonumber
=&\langle \Lambda^{\beta} (k(a)a_x),\Lambda^{\beta}u\rangle-\langle \Lambda^{\beta}(u
 u_x),\Lambda^{\beta}u\rangle+\left\langle\Lambda^{\beta}\left(\frac{a}{1+a}(-\Delta)^{\beta}u\right),\Lambda^{\beta} u\right\rangle\\\nonumber
\le\ & C\|\Lambda^{2\beta}u\|_{L^2}\|a_x\|_{L^2}\|a\|_{L^\infty}+C\|\Lambda^{2\beta}u\|_{L^2}\| u_x\|_{L^2}\|u\|_{L^\infty}+C\|\Lambda^{2\beta}u\|_{L^2}^{2}\|a\|_{L^\infty}\\\nonumber
\le & \left(\frac{1}{100}+\delta\right)\|\Lambda^{2\beta}u \|_{L^{2}}^{2}+C\|(a_x,u_x)\|_{L^2}^{2}\|(a,u)\|_{L^\infty}^{2}.
\end{align}
Combining (\ref{323}) and (\ref{324}), we deduce
 \begin{align}\label{325}
\frac{d}{dt}\| \Lambda^{\beta}(\sqrt{\gamma}a,u)\|_{L^2}^{2}+\|\Lambda^{2\beta}u\|_{L^2}^2\le CC_{0}^{2}\delta^{\frac{8\beta}{6\beta-1}}(1+t)^{-\frac{2\beta+3}{6\beta-1}}\mathscr{D}_{0}.
 \end{align}
Multiplying (\ref{325}) by $(1+t)^{\frac{1}{2\beta}+2}$, one can conclude that
\begin{align}\label{326}
&\frac{d}{dt}\left[(1+t)^{\frac{1}{2\beta}+2}\| \Lambda^{\beta}(\sqrt{\gamma}a,u)\|_{L^2}^{2}\right]+(1+t)^{\frac{1}{2\beta}+2}\|\Lambda^{2\beta}u\|_{L^2}^2\\\nonumber
\le &CC_{0}^{2}\delta^{\frac{8\beta}{6\beta-1}}(1+t)^{\frac{1}{2\beta}+1}\mathscr{D}_{0}+\left(2+\frac{1}{2\beta}\right)(1+t)^{\frac{1}{2\beta}+1}\| \Lambda^{\beta}(\sqrt{\gamma}a,u)\|_{L^2}^{2}.
\end{align}
Integrating (\ref{326}) over $[0,t]$ and using (\ref{322}) gives rise to
\begin{align}\label{327}
&(1+t)^{\frac{1}{2\beta}+2}\| \Lambda^{\beta}(\sqrt{\gamma}a,u)\|_{L^2}^{2}\\\nonumber
\le &CC_{0}^{2}\delta^{\frac{8\beta}{6\beta-1}}\int_{0}^{t}(1+t^{\prime})^{\frac{1}{2\beta}+1}\mathscr{D}_{0}dt^{\prime}+\left(2+\frac{1}{2\beta}\right)\int_{0}^{t}(1+t^{\prime})^{\frac{1}{2\beta}+1}\| \Lambda^{\beta}(\sqrt{\gamma}a,u)\|_{L^2}^{2}dt^{\prime}\\\nonumber
\le &C\left[C_{0}^{2}\delta^{3}(C_{0}^{2}\delta+1)^{2}+\delta^{2}\right](1+t),
\end{align}
Consequently, we obtain
\begin{align}\label{328}
\|\Lambda^{\beta}(a,u)\|_{L^2}^{2}\le  C\delta^{2}\left[C_{0}^{2}\delta(C_{0}^{2}\delta+1)^{2}+1\right](1+t)^{-\frac{1}{2\beta}-1}.
\end{align}
Now, adding (\ref{318}) and (\ref{328}),  it holds that
\begin{align}\label{329}
(1+t)^{\frac{1}{4\beta}}\|(a,u)\|_{L^{2}}+(1+t)^{\frac{1}{4\beta}+\frac{1}{2}}\|\Lambda^{\beta}(a,u)\|_{L^{2}}\le CC_{0}^{3}\delta^{\frac{3}{2}}+C\delta.
\end{align}
If we choose $\delta$ and $C_0$ satisfying $4C< C_{0}$ and $C_{0}^{3}\delta^{\frac{1}{2}}<1$. Then by (\ref{329}), we have
\begin{align}\label{330}
(1+t)^{\frac{1}{4\beta}}\|(a,u)\|_{L^{2}}+(1+t)^{\frac{1}{4\beta}+\frac{1}{2}}\|\Lambda^{\beta}(a,u)\|_{L^{2}}<\frac{C_{0}\delta}{2}.
\end{align}
Therefore, we close the continuity argument.  From this, we obtain
\begin{align*}
\|\Lambda^{s_1}(a,u)\|_{{L}^{2}}\le C(1+t)^{-\frac{1+2s_1}{4\beta}},
\end{align*}
where $0\le s_1\le \beta$.
This completes the proof of Proposition \ref{3prop2}.
 \end{proof}
 \textbf{Proof of Theorem \ref{1theo2}:}
\begin{proof} 
  Next we want to prove the lower bound to the solutions for (\ref{eq1}).

 For the sake of convenience, we agree that occurrences of $\delta$ denote its positive powers throughout this proof. We first consider the linear system:
\begin{align}\label{332}
\left\{\begin{array}{l}\partial_{t}a_{L}+ \partial_{x}{u_{L}}=0,\ \\ [1ex] \partial_{t}u_{L}+\gamma\partial_{x}a_{L}+(-\Delta)^{\beta}u_{L}=0,\\[1ex]
  a_{L}|_{t=0}=a_{0},\ u_{L}|_{t=0}=u_{0}.\end{array}\right.
\end{align}
According to Proposition \ref{3prop2}, one can deduce that there holds $\|\Lambda^{s_1}(a_{L},u_{L})\|_{L^2}\le C(1+t)^{-\frac{1+2s_1}{4\beta}}$ with $0\le s_1\le s+\beta$ and $(a_{L},u_{L})\in L^{ \infty}([0,\infty),\dot{B}_{2,\infty}^{-\frac{1}{2}})$. Applying Fourier transform to (\ref{332}), we get
\begin{align}\label{333}
\left\{\begin{array}{l}\partial_{t}\widehat{a}_{L}-i\xi\widehat{u}_{L}=0,\\ [1ex] \partial_{t}\widehat{u}_{L}+|\xi|^{2\beta}\widehat{u}_{L}+\gamma i\xi\widehat{a}_{L}=0.\end{array}\right.
\end{align}
Then we observe from (\ref{333}) that
\begin{align*}
\frac{1}{2}\frac{d}{dt}\left[e^{2|\xi|^{2\beta}t}|(\sqrt{\gamma}\widehat{a}_{L},\widehat{u}_{L})|^{2}\right]-\gamma| \xi|^{2\beta}e^{2|\xi|^{2\beta}t}|\widehat{a}_{L}|^{2}=0,
\end{align*}
which implies that
\begin{align*}
|\xi|^{2s_{1}}|(\sqrt{\gamma}\widehat{a}_{L},\widehat{u}_{L})|^{2}=|\xi|^{2s_{1}}e^{-2|\xi|^{2\beta}t }|(\sqrt{\gamma}\widehat{a}_{0},\widehat{u}_{0})|^{2}+\int_{0}^{t}2\gamma|\xi|^{2(s_{1}+\beta)}e^{-2|\xi|^{2\beta}(t-t^{\prime})}|\widehat{a}_{L}|^{2}dt^{\prime}.
\end{align*}
Due to the fact $0<c_{0}=|\int_{\mathbb{R}}(a_{0}, u_{0})dx|=|(\widehat{a}_{0}(0),\widehat{u}_{0}(0))|$, we deduce that there exists $\eta>0$ such that $|(\widehat{a}_{0}(\xi),\widehat{u}_{0}(\xi))|\geq\frac{c_{0}}{2}$ if $\xi\in B(0, \eta)$. Then we have 
 \begin{align}\label{334}
   \|(\sqrt{\gamma}a_{L},u_{L})\|_{\dot{H}^{s_{1}}}^{2}& \geq\int_{|\xi|\leq\eta}|\xi|^{2s_{1}}e^{-2|\xi|^{2\beta}t}|(\sqrt{\gamma}\widehat{a}_{0}, \widehat{u}_{0})|^{2}d\xi\\ \nonumber &\geq\frac{c_{0}^{2}}{4}\int_{|\xi|\leq\eta}|\xi|^{2s_{1}}e^{-2|\xi|^{2\beta}t}d\xi\\ \nonumber
   &\geq C_{\beta}^{2}(1+t)^{-\frac{1+2s_{1}}{2\beta}},\nonumber
 \end{align}
 where $C_\beta^2=\frac{c_{0}^{2}}{4}\int_{|y|\le \eta}|y|^{2s_{1}}e^{-2|y|^{2\beta}}dy$.
 
  Taking $a_{N}=a-a_{L}$ and $u_{N}= u-u_{L}$. By Proposition \ref{3prop2}, we easily deduce $\|\Lambda^{s_{1}}(a_{N},u_{N})\|_{L^{2}}^{2} \leq C(1+t)^{-\frac{2s_{1}+1}{2\beta}}$ and $(a_{N},u_{N})\in L^{ \infty}([0,\infty),\dot{B}_{2,\infty}^{-\frac{1}{2}})$.  Moreover, we have
\begin{align}\label{335}
\left\{\begin{array}{l}\partial_{t}a_{N}+ \partial_{x}{u_{N}}=F,\ \\ [1ex] \partial_{t}u_{N}+\gamma\partial_{x}a_{N}+(-\Delta)^{\beta}u_{N}=H,\\[1ex]
  a_{N}|_{t=0}=0,\ u_{N}|_{t=0}=0.\end{array}\right.
\end{align}
 According to the time decay rates for $(a_N,u_N)$ and $(a,u)$, we conclude from (\ref{335}) that
\begin{align*}
\frac{1}{2}\frac{d}{dt}\|(\sqrt{\gamma}a_{N},u_{N})\|_{L^{2}}^ {2}+\|\Lambda^{\beta}u_{N}\|_{L^{2}}^{2}&=\gamma\langle F,a_{N}\rangle+ \langle H,u_{N}\rangle\\\nonumber
 &\le C\|(a_{N},u_ {N})\|_{L^{2}}\|(a_x,u_x)\|_{L^{2}}\|(a,u)\|_{L^{\infty}}+C\|a\|_{L^{\infty}}\|u_{N}\|_{L^{2}}\|\Lambda^{2\beta}u\|_{L^{2}}\\ \nonumber
&\le C\delta(1+t)^{-\frac{1}{2\beta}-1},\\\nonumber
\end{align*}
and
\begin{align*}
\frac{d}{dt}\langle\Lambda^{2\beta-2}u_{N},\partial_{x}a_{N}\rangle+ \gamma\|\Lambda^{\beta} a_{N}\|_{L^{2}}^{2}=&\langle\Lambda^{2\beta-2}( H-(-\Delta)^{\beta}u_{N}),\partial_{x}a_{N}\rangle\\\nonumber
&-\langle\Lambda^{2\beta-2}\ (F-\partial_{x}u_{N}),\partial_{x}u_{N}\rangle\\\nonumber
\le &C(1+t)^{-\frac{1}{2\beta}-1}.\nonumber
\end{align*}
Then we conclude that
\begin{align}\label{336}
&\frac{d}{dt}\left[\|(\sqrt{\gamma}a_{N},u_{N})\|_{L^{2}}^{2}+k_{0}\langle\Lambda^{2\beta-2}u_{N},\partial_{x}a_{N}\rangle\right]+\frac{ k_{0} C_{2}}{1+t}\|a_{N}\|_{L^{2}}^{2}+\frac{2C_{2}}{1+t}\|u_{N}\|_{L^{2}}^{2} \\\nonumber
 \le &\frac{CC_{2}}{1+t}\int_{S(t)}|\widehat{a_{N}}(\xi)|^{2}+|\widehat{ u_{N}}(\xi)|^{2}d\xi+C(\delta+k_{0})(1+t)^{-\frac{1}{\beta}-1},\nonumber
\end{align}
where $k_0$ is a small positive number, which will be determined later. We deduce from a similar derivation of (\ref{317}) that
\begin{align}\label{337}
\int_{S(t)}|\widehat{a_{N}}|^{2}+|\widehat{u_{N}}|^{2}d \xi&\le C\int_{S(t)}\int_{0}^{t}|\widehat{F}\cdot\overline{\widehat{a_ {N}}}|+|\widehat{H}\cdot\overline{\widehat{u_{N}}}|dt^{\prime}d\xi\\ \nonumber
&\le  C(1+t)^{-\frac{1}{2\beta}}\int_{0}^{t}\|(a,u)\|_{L^{2}}\left(\|\partial_{x}(a,u)\|_{L^{2}}+\|\Lambda^{2\beta} u\|_{L^{2}}\right)\|(a_{N},u_{N})\|_{\dot{B}^{-\frac{1}{2}}_{2,\infty}}dt^{\prime}\\ &\le C\delta(1+t)^{-\frac{1}{2\beta}}.\nonumber
\end{align}
Plugging (\ref{337}) into (\ref{336}), we find
\begin{align}\label{338}
&\frac{d}{dt}\left[\|(\sqrt{\gamma}a_{N},u_{N})\|_{L^{2}}^{2}+k_{0}\langle\Lambda^{2\beta-2}u_{N},\partial_{x} a_{N}\rangle\right]+\frac{ k_{0}C_{2}}{1+t}\|a_{N}\|_{L^{2}}^{2}+\frac{2C_{2}}{1+t}\|u_{N}\|_{L^{2}}^{2} \\ \nonumber
\le& C\left(\delta C_2+k_{0}\right)(1+t)^{-\frac{1}{\beta}-1}.\nonumber
\end{align}
Thanks to Proposition \ref{3prop2}, integrating (\ref{338}) over $[0,t]$, we arrive at
\begin{align*}
&(1+t)^{1+\frac{1}{2\beta}}\|(\sqrt{\gamma}a_{N},u_{N})\|_{L^{2}}^{2}\\\nonumber
\le &\ C\int_{0}^{t}2k_{0}(1+t^{\prime})^{\frac{1}{2\beta}}|\langle\Lambda^{2\beta-2}u_{N},\partial_{x}a_{N}\rangle| dt^{\prime}
+2Ck_{0}(1+t)^{1+\frac{1}{2\beta}}|\langle\Lambda^{2\beta-2}u_{N},\partial_{x}a_{N}\rangle|+C(\delta C_2+k_{0})(1+t)\\\nonumber
 \le &\ C\left(\delta C_2+k_{0}\right)(1+t),
\end{align*}
which yields
\begin{align}\label{339}
\|(\sqrt{\gamma}a_{N},u_{N})\|_{L^{2}}^{2}\le C(\delta C_2+k_{0})(1+t)^{-\frac{1}{2\beta}}.
\end{align}
Applying $\Lambda^{s_{1}}$ to (\ref{335}), we get
\begin{align}\label{3391}
\left\{\begin{array}{l}\partial_{t}\Lambda^{s_{1}}a_{N}+ \Lambda^{s_{1}}\partial_{x}{u_{N}}=\Lambda^{s_{1}}F,\ \\ [1ex] \partial_{t}\Lambda^{s_{1}}u_{N}+\gamma\Lambda^{s_{1}}\partial_{x}a_{N}+(-\Delta)^{\beta}\Lambda^{s_{1}}u_{N}=\Lambda^{s_{1}}H.\end{array}\right.
\end{align}
Let $b=2-\frac{1}{\beta}$. Standard energy estimate yields 
 \begin{align}\label{3392}
&\frac{1}{2} \frac{d}{d t}\left[(1+t)^{b}\|\Lambda^{\beta}(\sqrt{\gamma}a_N, u_N)\|_{L^{2}}^{2}\right]+(1+t)^{b}\|\Lambda^{2\beta} u_N\|_{L^{2}}^{2}\\\nonumber
=&b(1+t)^{b-1}\|\Lambda^{\beta}(\sqrt{\gamma}a_N, u_N)\|_{L^{2}}^{2}+(1+t)^{b}\left\langle\gamma\Lambda^{\beta} F, \Lambda^{\beta} a_N\right\rangle+(1+t)^{b}\left\langle\Lambda^{\beta} H, \Lambda^{\beta} u_N\right\rangle.\nonumber
\end{align}
From (\ref{335}), we know
   \begin{align}\label{340}
&(1+t)^{b}\left\langle\gamma\Lambda^{\beta} F, \Lambda^{\beta} a_N\right\rangle\\\nonumber
\le &C(1+t)^{b}(\|\Lambda a_{N}\|_{L^{2}}\|\Lambda^{\beta} a\|_{L^{2}}\|\Lambda^{\beta+\frac{1}{2}} u\|_{L^{2}}+\|\Lambda a_{N}\|_{L^{2}}\| a\|_{L^{\infty}}\|\Lambda^{2\beta}u\|_{L^{2}})\\\nonumber
\le &C\delta(1+t)^{-\frac{3}{2\beta}},
 \end{align}
 and
 \begin{align}\label{341}
&(1+t)^{b}\left\langle\Lambda^{\beta} H, \Lambda^{\beta} u_N\right\rangle\\\nonumber
\le& C(1+t)^{b}(\|\Lambda^{2\beta}u_N\|_{L^2}\|a_x\|_{L^2}\|a\|_{L^\infty}+\|\Lambda^{2\beta}u_N\|_{L^2}\| u_x\|_{L^2}\|u\|_{L^\infty}+\|\Lambda^{2\beta}u_N\|_{L^2}\|\Lambda^{2\beta}u\|_{L^{2}}\|a\|_{L^\infty})\\\nonumber
\le& C\delta(1+t)^{-\frac{3}{2\beta}}.
\end{align}
 Plugging (\ref{340}) and (\ref{341}) into (\ref{3392}), we have
  \begin{align}\label{342}
&\frac{1}{2} \frac{d}{d t}\left[(1+t)^{b}\|\Lambda^{\beta}(\sqrt{\gamma}a_N, u_N)\|_{L^{2}}^{2}\right]+(1+t)^{b}\|\Lambda^{2\beta} u_N\|_{L^{2}}^{2}\\\nonumber
\le &\ b(1+t)^{b-1}\|\Lambda^{\beta}(\sqrt{\gamma}a_N, u_N)\|_{L^{2}}^{2}+C\delta(1+t)^{-\frac{3}{2\beta}}.
\end{align}
Due to (\ref{307}), we have
\begin{align}\label{343}
&\frac{d}{dt}\langle\partial_x a_N,k_0u_N\rangle+k_0\gamma\|\partial_x a_N\|_{L^2}^2\\\nonumber
=& k_0\langle u_x,\partial_x u_N\rangle+k_0\langle(au)_{x},\partial_x u_N\rangle-k_0\langle(-\Delta)^{\beta}u,\partial_x a_N\rangle\\\nonumber
&+k_0\langle K(a)a_x,\partial_x a_N\rangle-k_0\langle uu_x,\partial_x a_N\rangle+k_0\left\langle \frac{a}{1+a}(-\Delta)^{\beta}u,\partial_x a_N\right\rangle\\\nonumber
\le\ &Ck_0\|\partial_x u\|_{L^2}\|\partial_x u_N\|_{L^2}+Ck_0\|\Lambda^{2\beta}u\|_{L^2}\|\Lambda a_N\|_{L^2}\\\nonumber
&+ Ck_0\|\Lambda (a,u)\|_{L^{2}}\|\Lambda (a_N,u_N)\|_{L^{2}}\|(a,u)\|_{L^{\infty}}+Ck_0\|\Lambda a_N\|_{L^{2}}\|a\|_{L^{\infty}}\|\Lambda^{2\beta}u\|_{L^{2}}\\\nonumber
\le\ &C(k_0+\delta)(1+t)^{-\frac{3}{2\beta}},
\end{align}
Combining (\ref{342}) to (\ref{343}), we arrive at
\begin{align}\label{344}
&\frac{d}{d t}\left[(1+t)^{b}\|\Lambda^{\beta}(\sqrt{\gamma}a_N, u_N)\|_{L^{2}}^{2}+\langle\partial_x a_N,2k_0u_N\rangle\right]+2(1+t)^{b}\|\Lambda^{2\beta} u_N\|_{L^{2}}^{2}+2k_0\gamma\|\partial_x a_N\|_{L^2}^2\\\nonumber
\le &C(k_0+\delta)(1+t)^{-\frac{3}{2\beta}}+b(1+t)^{b-1}\|\Lambda^{\beta}(\sqrt{\gamma}a_N, u_N)\|_{L^{2}}^{2}.
\end{align}
Recall the definition of $S(t)$ and taking $k_{0}C_{2}$ big enough, by (\ref{339}), we have
\begin{align}\label{345}
&\frac{d}{d t}\underbrace{\left[(1+t)^{b}\|\Lambda^{\beta}(\sqrt{\gamma}a_N, u_N)\|_{L^{2}}^{2}+\langle\partial_x a_N,2k_0u_N\rangle\right]}_{\tilde{E}(t)}\\\nonumber
&+\underbrace{C_{2}(1+t)^{b-1}\|\Lambda^{\beta} u_N\|_{L^{2}}^{2}+k_0C_2(1+t)^{b-1}\|\Lambda^{\beta} a_N\|_{L^2}^2}_{\tilde{D}(t)}\\\nonumber
\le &C(k_0+\delta)(1+t)^{-\frac{3}{2\beta}}+C(1+t)^{b-1}\int_{S(t)}|\xi|^{2\beta}\left(|\widehat{a_{N}}(\xi)|^{2}+|\widehat{u_{N}}(\xi)|^{2}\right)d \xi\\\nonumber
\le &C(\delta C_2+k_{0}+\delta)(1+t)^{-\frac{3}{2\beta}},
\end{align}
Multiplying $(1+t)^{\frac{3}{2\beta}}$ to (\ref{345}) and integrating in $[0,t]$, by (\ref{339}) again, it follows that
\begin{align}\label{346}
(1+t)^{\frac{3}{2\beta}}\tilde{E}\le & C\int_{0}^{t}(1+t^{\prime})^{\frac{3}{2\beta}-1}\langle\partial_x a_N,2k_0u_N\rangle dt^{\prime}+C(\delta C_2+k_{0}+\delta)(1+t)\\\nonumber
\le & Ck_0(\delta C_2+k_{0})^{\frac{1}{2}}\int_{0}^{t}(1+t^{\prime})^{\frac{1}{2\beta}-1} dt^{\prime}+C(\delta C_2+k_{0}+\delta)(1+t)\\\nonumber
\le & C\left[k_0(\delta C_2+k_{0})^{\frac{1}{2}}+(\delta C_2+k_{0}+\delta)\right](1+t).\nonumber
\end{align}
Moreover, we deduce that
\begin{align}\label{347}
&(1+t)^{2+\frac{1}{2\beta}}\|\Lambda^{\beta}(\sqrt{\gamma}a_{N},u_{N})\|_{L^{2}}^ {2}\\\nonumber
\le &C\left[k_0(\delta C_2+k_{0})^{\frac{1}{2}}+(\delta C_2+k_{0}+\delta)\right](1+t)-(1+t)^{\frac{3}{2\beta}}\langle\partial_x a_N,2k_0u_N\rangle\\\nonumber
\le &C\left[k_0(\delta C_2+k_{0})^{\frac{1}{2}}+(\delta C_2+k_{0}+\delta)\right](1+t),\nonumber
\end{align}
which implies that
\begin{align}\label{348}
\|\Lambda^{\beta}(\sqrt{\gamma}a_{N},u_{N})\|_{L^{2}}^ {2}
\le C\left[k_0(\delta C_2+k_{0})^{\frac{1}{2}}+(\delta C_2+k_{0}+\delta)\right](1+t)^{-\frac{1}{2\beta}-1}.
\end{align}
Taking $k_0(\delta C_2+k_{0})^{\frac{1}{2}}+(\delta C_2+k_{0}+\delta)$ small enough leads to 
\begin{align}\label{349}
\|\Lambda^{\beta}(\sqrt{\gamma}a_{N},u_{N})\|_{L^{2}}^ {2}\le \frac{C_{\beta}^2}{4}(1+t)^{-\frac{1}{2\beta}-1}.
\end{align}
Combining (\ref{339}) and (\ref{343}), we get for any $0\le s_1\le\beta$, that
\begin{align}\label{3444}
\|\Lambda^{s_1}(\sqrt{\gamma}a_{N},u_{N})\|_{L^{2}}^ {2}\le \frac{C_{\beta}^2}{4}(1+t)^{-\frac{1+2s_1}{4\beta}}.
\end{align}
Thanks to (\ref{334}) and (\ref{344}), we have
\begin{align}\label{3454}
\|\Lambda^{s_1}(\sqrt{\gamma}a,u)\|_{L^{2}}^ {2}\ge \frac{C_{\beta}^2}{4}(1+t)^{-\frac{1+2s_1}{4\beta}}.
\end{align}
This completes the proof of Theorem \ref{1theo2}.
 \end{proof}

\section{Global regularity and sharp decay rates of large solutions to CNS equations}  \label{section4}
In this section, we firstly consider the global regularity for the system (\ref{eq1}) with large initial data.  Then we consider the time decay rates for the large solutions. As introduced in Section \ref{section1}, we first get  $H^1$ time decay rate  by virtue of the method from \cite{Guo01012012}. Then we can apply Schonbek's strategy to obtain the initial $L^2$ time decay rate.  In this section, we restrict our attention here to the case with $s=1+\beta$ and $\frac{1}{2}<\beta<\frac{3}{4}$. The proofs for the case $s=\frac{1}{2}$ can be treated in a same way, it thus is omitted here.
 Now we are going to present the proof of Theorem \ref{1theo3}.
 
\textbf{Proof of Theorem \ref{1theo3}}:

\begin{proof} Before proceeding any further, we assume a priori that
\begin{align*}
\|(a,u)\|_{L^{2}}\leq M,\quad\|\partial_{x}(a,u)\|_{H^{\beta}}\leq\delta(M),
\end{align*}
where $\delta(M)$ is a small enough positive constant depending on $M$.
From (\ref{302}) and (\ref{303}), we deduce that
\begin{align}\label{403}
\|(a,u)\|_{L^2}^{2}\le C\|(a_0,u_0)\|_{L^2}^{2}\le\frac{M^2}{4}
\end{align}
holds if we taking $\|(a_0,u_0)\|_{L^2}\le\frac{M}{2}$.
Taking the $L^2$ inner product of the first two equations of (\ref{eq1}) with $(\sqrt{\gamma}\Lambda a,\Lambda u)$. Using Lemma \ref{lemma3}, \ref{lemma5} and taking $\delta$ small enough, we get
\begin{align}\label{404}
\frac{1}{2}\frac{d}{dt}\|\sqrt{\gamma} \Lambda a\|_{L^2}^{2}+\gamma\langle\Lambda u_x,\Lambda a\rangle=&-\gamma\langle\Lambda(au)_x,\Lambda a\rangle\\\nonumber
\le\ &C\|\Lambda^{1+\beta}a \|_{L^{2}}\|\Lambda^{2-\beta}(a,u)\|_{L^{\frac{1}{\frac{3}{2}-2\beta}}}\|(a,u)\|_{L^{\frac{1}{2\beta-1}}}\\\nonumber
\le\ &C\|\Lambda^{1+\beta}a\|_{L^{2}}\|\Lambda^{1+\beta}(a,u)\|_{L^{2}}\|\Lambda^{\frac{3}{2}-2\beta}(a,u)\|_{L^{2}}\\\nonumber
\le\ &CE_{0}^{\beta-\frac{1}{4}}E_{1}^{\frac{3}{4}-\beta} \mathscr{D}_{1}\\\nonumber
\le\ &\frac{1}{100}\mathscr{D}_{1},
 \end{align}
 and 
 \begin{align}\label{405}
&\frac{1}{2}\frac{d}{dt}\|\Lambda u\|_{L^2}^{2}+\gamma\langle\Lambda a_x,\Lambda u\rangle+\|\Lambda^{1+\beta}u\|_{L^2}^2\\\nonumber
=&\langle \Lambda (k(a)a_x),\Lambda u\rangle-\langle \Lambda(uu_x),\Lambda u\rangle+\left\langle\Lambda\left(\frac{a}{1+a}(-\Delta)^{\beta}u\right),\Lambda u\right\rangle\\\nonumber
\le\ & C\|\Lambda^{1+\beta}u\|_{L^{2}}(\|\Lambda^{1-\beta}a\|_{L^{2}}\|a_x\|_{L^{\infty}}+\|\Lambda^{2-\beta}(a,u)\|_{L^{\frac{1}{\frac{3}{2}-2\beta}}}\| (a,u)\|_{L^{\frac{1}{2\beta-1}}}\\\nonumber
&+\|\Lambda^{2\beta}u\|_{L^{\frac{1}{\beta-\frac{1}{2}}}}\|\Lambda^{1-\beta}a\|_{L^{\frac{1}{1-\beta}}}+\|\Lambda^{1+\beta}u\|_{L^{2}}\|(a,u)\|_{L^{\infty}})\\\nonumber
\le\ &C\|\Lambda^{1+\beta}u\|_{L^{2}}\left(\|(a,u)\|_{L^{2}}^{\frac{6\beta-1}{2(1+\beta)}}\|\Lambda^{1+\beta} (a,u)\|_{L^{2}}^{\frac{5-2\beta}{2(1+\beta)}}+\|\Lambda^{1+\beta}a\|_{L^{2}}\|\Lambda^{\frac{3}{2}-2\beta}a\|_{L^{2}}\right.\\\nonumber
&\left.+\|\Lambda^{1+\beta}u\|_{L^{2}}\|\Lambda^{\frac{1}{2}}a\|_{L^{2}}+\|\Lambda^{1+\beta}u\|_{L^{2}}\|(a,u)\|_{L^{\infty}}\right)\\\nonumber
\le\ &\frac{1}{100}\mathscr{D}_{1}.
\end{align}
 Combining (\ref{404}) and (\ref{405}), we find
\begin{align}\label{406}
\frac{d}{dt}E_{1}+2\|\Lambda^{1+\beta}u\|_{L^2}^2\le \frac{1}{50}\mathscr{D}_{1}.
\end{align}
From similar arguments to (\ref{306}), by Lemma \ref{lemma3}, \ref{lemma5} and taking $\delta$ small enough, we have
\begin{align}\label{407}
&\frac{d}{dt}\langle\Lambda^{\beta}a_x,k\Lambda^{\beta}u\rangle+k\gamma\|\Lambda^{\beta}a_x\|_{L^2}^2\\\nonumber
=& k\langle\Lambda^{\beta}u_x,\Lambda^{\beta}u_x\rangle+k\langle\Lambda^{\beta}(au)_x,\Lambda^{\beta}u_x\rangle-k\langle\Lambda^{\beta}(-\Delta)^{\beta}u,\Lambda^{\beta} a_x\rangle\\\nonumber
&+k\langle \Lambda^{\beta} (K(a)a_x),\Lambda^{\beta}a_x\rangle-k\langle\Lambda^{\beta} (uu_x),\Lambda^{\beta} a_x\rangle+k\left\langle\Lambda^{\beta}\left(\frac{a}{1+a}(-\Delta)^{\beta}u\right),\Lambda^{\beta}a_x\right\rangle\\\nonumber
\le\ &Ck\|\Lambda^{1+\beta}u\|_{L^2}^2+Ck\|(a,u)\|_{L^{\infty}}\|\Lambda^{1+\beta}(a,u)\|_{L^{2}}^{2}+k\|\Lambda^{3\beta}u\|_{L^2}\|\Lambda^{1+\beta}a\|_{L^2}\\\nonumber
&+ Ck\|\Lambda^{\beta}(a,u)\|_{L^{2}}\|\Lambda^{1+\beta}(a,u)\|_{L^{2}}\|\partial_{x}(a,u)\|_{L^{\infty}}+ Ck\|\Lambda^{1+\beta}a\|_{L^{2}}\|a\|_{L^{\infty}}\|\Lambda^{3\beta}u\|_{L^{2}}\\\nonumber
&+Ck\|\Lambda^{1+\beta}a\|_{L^{2}}\|\Lambda^{\beta}a\|_{L^{\frac{1}{1-\beta}}}\|\Lambda^{2\beta}u\|_{L^{\frac{1}{\beta-\frac{1}{2}}}}\\\nonumber
\le\ &Ck\|\Lambda^{1+\beta}u\|_{L^2}^2+k\|(a,u)\|_{L^{2}}^{\frac{1}{2}}\|\partial_{x} (a,u)\|_{L^{2}}^{\frac{1}{2}}\|\Lambda^{1+\beta}(a,u)\|_{L^{2}}^{2}+Ck\|\Lambda^{3\beta}u\|_{L^2}\|\Lambda^{1+\beta}a\|_{L^2}\\\nonumber
&+ Ck\|(a,u)\|_{L^{2}}^{\frac{2\beta+1}{2(1+\beta)}}\|\Lambda^{1+\beta} (a,u)\|_{L^{2}}^{\frac{5+4\beta}{2(1+\beta)}}+ Ck\|\Lambda^{1+\beta}a\|_{L^{2}}\|a\|_{L^{2}}^{\frac{1}{2}}\|a_x\|_{L^{2}}^{\frac{1}{2}}\|\Lambda^{3\beta}u\|_{L^{2}}\\\nonumber
&+Ck\|\Lambda^{1+\beta}(a,u)\|_{L^{2}}^{2}\|a\|_{L^{2}}^{\frac{3}{2}-2\beta}\|a_x\|_{L^{2}}^{2\beta-\frac{1}{2}}\\\nonumber
\le\ &\frac{1}{200}\mathscr{D}_{1}+Ck\|\Lambda^{1+\beta} u\|_{H^{\beta}}^{2}+\frac{k}{100}\|\Lambda^{1+\beta}a\|_{H^{1-\beta}}^{2}.
\end{align}
Taking the $L^2$ inner product of the first two equations of (\ref{eq1}) with $(\sqrt{\gamma}\Lambda^{1+\beta} a,\Lambda^{1+\beta} u)$ and using Lemma \ref{lemma4} and \ref{lemma5}. Similar to the estimates (\ref{404}) and (\ref{405}), one can derive
\begin{align}\label{408}
&\frac{1}{2}\frac{d}{dt}\|\sqrt{\gamma} \Lambda^{1+\beta} a\|_{L^2}^{2}+\gamma\langle\Lambda^{1+\beta}u_x,\Lambda^{1+\beta} a\rangle\\\nonumber
=&-\gamma\langle\Lambda^{1+\beta}(au)_x,\Lambda^{1+\beta} a\rangle\\\nonumber
=&-\gamma\langle[\Lambda^{1+\beta},u\partial_x]a,\Lambda^{1+\beta}a\rangle+\frac{\gamma}{2}\langle u_x, |\Lambda^{1+\beta}a|^{2}\rangle-\gamma\langle\Lambda^{2\beta}(au_x),\Lambda^{2}a\rangle\\\nonumber
\le\ &C\|\Lambda^{1+\beta}(a,u) \|_{L^{2}}^{2}\|\partial_x(a,u)\|_{L^{\infty}}+C\|\Lambda^{2}a \|_{L^{2}}\|\Lambda^{1+2\beta}u \|_{L^{2}}\|a \|_{L^{\infty}}+C\|\Lambda^{2\beta}a\|_{L^{2}}\|\Lambda^{2}a\|_{L^{2}}\|u_x\|_{L^{\infty}}\\\nonumber
\le\ &\frac{1}{100}\mathscr{D}_{1},
 \end{align}
  and
 \begin{align}\label{409}
&\frac{1}{2}\frac{d}{dt}\|\Lambda^{1+\beta} u\|_{L^2}^{2}+\gamma\langle\Lambda^{1+\beta}a_x,\Lambda^{1+\beta} u\rangle+\|\Lambda^{1+2\beta}u\|_{L^2}^2\\\nonumber
=&\langle \Lambda^{1+\beta} (k(a)a_x),\Lambda^{1+\beta} u\rangle-\langle \Lambda^{1+\beta}(u
 u_x),\Lambda^{1+\beta} u\rangle+\left\langle\Lambda^{1+\beta}\left(\frac{a}{1+a}(-\Delta)^{\beta}u\right),\Lambda^{1+\beta} u\right\rangle\\\nonumber
\le\ &C \|\Lambda^{1+2\beta}u\|_{L^{2}}(\|\Lambda(a,u)\|_{L^{2}}\|\partial_x(a,u)\|_{L^{\infty}}+\|\Lambda^{2}(a,u)\|_{L^{2}}\|(a,u) \|_{L^{\infty}}+\|\Lambda^{1+\beta}u\|_{L^{2}}\|\nabla u\|_{L^{\infty}}\\\nonumber
&\|\Lambda^{2\beta}u\|_{L^{\frac{1}{1-\beta}}}\|\Lambda a\|_{L^{\frac{1}{\beta-\frac{1}{2}}}}+\|\Lambda^{1+2\beta}u\|_{L^{2}}\|(a,u)\|_{L^{\infty}})\\\nonumber
\le\ & C\|\Lambda^{1+2\beta}u\|_{L^{2}}(\|\Lambda(a,u)\|_{L^{2}}\|\partial_x(a,u)\|_{L^{\infty}}+\|\Lambda^{2}(a,u)\|_{L^{2}}\|(a,u) \|_{L^{\infty}}+\|\Lambda^{1+\beta}u\|_{L^{2}}\|u_x\|_{L^{\infty}}\\\nonumber
&\|\Lambda^{3\beta-\frac{1}{2}}u\|_{L^2}\|\Lambda^{2-\beta}a\|_{L^{2}}+\|\Lambda^{1+2\beta}u\|_{L^{2}}\|(a,u)\|_{L^{\infty}})\\\nonumber
\le\ & \frac{1}{100}\mathscr{D}_{1}.
\end{align}
Combining (\ref{408}) and (\ref{409}), we find
\begin{align}\label{411}
\frac{d}{dt}E_{1+\beta}+2\|\Lambda^{1+2\beta}u\|_{L^2}^2\le \frac{1}{50}\mathscr{D}_{1}.
\end{align}
Along the same line, we find
\begin{align}\label{412}
&\frac{d}{dt}\langle\Lambda a_x,k\Lambda u\rangle+k\gamma\|\Lambda a_x\|_{L^2}^2\\\nonumber
=& k\langle\Lambda u_x,\Lambda u_x\rangle+k\langle\Lambda(au)_x,\Lambda u_x\rangle-k\langle\Lambda(-\Delta)^{\beta}u,\Lambda a_x\rangle\\\nonumber
&+k\langle \Lambda (K(a)a_x),\Lambda a_x\rangle-k\langle\Lambda(uu_x),\Lambda a_x\rangle+k\left\langle\Lambda\left(\frac{a}{1+a}(-\Delta)^{\beta}u\right),\Lambda a_x\right\rangle\\\nonumber
\le\  &Ck\|\Lambda^{2}u\|_{L^2}^2+Ck\|\Lambda^{1+2\beta}u\|_{L^{2}}\|(a,u)\|_{L^{\frac{1}{2\beta-1}}}\|\Lambda^{3-2\beta}(a,u)\|_{L^{\frac{1}{\frac{3}{2}-2\beta}}}+Ck\|\Lambda^{1+2\beta}u\|_{L^2}\|\Lambda^{2}a\|_{L^2}\\\nonumber
&+Ck\|\Lambda^{2}(a,u)\|_{L^{2}}^{2}\|(a,u)\|_{L^{\infty}}+ Ck\|\Lambda(a,u)\|_{L^{2}}\|\Lambda^{2}(a,u)\|_{L^{2}}\|\partial_{x}(a,u)\|_{L^{\infty}}\\\nonumber
&+ Ck\|\Lambda^{2}a\|_{L^{2}}\|a\|_{L^{\infty}}\|\Lambda^{1+2\beta}u\|_{L^{2}}+Ck\|\Lambda^{2}a\|_{L^{2}}\|\Lambda a\|_{L^{\frac{1}{1-\beta}}}\|\Lambda^{2\beta}u\|_{L^{\frac{1}{\beta-\frac{1}{2}}}}\\\nonumber
\le\ &Ck\|\Lambda^{2}u\|_{L^2}^2+k\|\Lambda^{1+2\beta}u\|_{L^{2}}\|\Lambda^{\frac{3}{2}-2\beta}(a,u)\|_{L^{2}}\|\Lambda^{2}(a,u)\|_{L^{2}}+Ck\|\Lambda^{1+2\beta}u\|_{L^2}\|\Lambda^{2}a\|_{L^2}\\\nonumber
&+Ck\|\Lambda^{2}(a,u)\|_{L^{2}}^{2}\|(a,u)\|_{L^{\infty}}+ Ck\|\Lambda(a,u)\|_{L^{2}}\|\Lambda^{2}(a,u)\|_{L^{2}}\|\partial_x(a,u)\|_{L^{\infty}}\\\nonumber
&+Ck\|\Lambda^{2}a\|_{L^{2}}\|a\|_{L^{\infty}}\|\Lambda^{1+2\beta}u\|_{L^{2}}+Ck\|\Lambda^{2}a\|_{L^{2}}\|\Lambda^{\beta+\frac{1}{2}} a\|_{L^{2}}\|\Lambda^{1+\beta}u\|_{L^{2}}\\\nonumber
\le\ &\frac{1}{200}\mathscr{D}_{1}+Ck\|\Lambda^{1+\beta} u\|_{H^{\beta}}^{2}+\frac{k}{100}\|\Lambda^{1+\beta}a\|_{H^{1-\beta}}^{2}.
\end{align}
By (\ref{406}), (\ref{407}),  (\ref{411}) and (\ref{412}) and taking $k$ small enough, we have
\begin{align}\label{413}
\quad\frac{d}{dt}\mathscr{E}_{1}+\mathscr{D}_{1}\leq 0.
\end{align}
Note
\begin{align*}
\mathscr{E}_{1}\approx \|\nabla(a,u)\|_{H^{\beta}}^{2}.
\end{align*}
Integrating (\ref{413}) in time on $[0,t]$, then we conclude that
\begin{align*}
\sup_{t}\|\nabla(a,u)\|_{H^{\beta}}^{2}+\int_{0}^{t}\mathscr{D}_{1}(t^{\prime})dt^{\prime}\le C\|\nabla(a_0,u_0)\|_{H^{\beta}}^{2}\le \frac{\delta^2(M)}{4}
\end{align*}
holds for small enough $\delta(M)>0$. This together with (\ref{403}) implies the conclusion holds by standard bootstrap argument.
\end{proof}
Now we establish the following decay estimate for $\|\partial_x(a,u)\|_{H^{\beta}}$.
\begin{lemma}\label{4lemm2}
Under the assumptions of Theorem \ref{1theo3}, it holds that
\begin{align}\label{420}
\|\partial_x(a,u)\|_{H^{\beta}}\le C(1+t)^{-\frac{1}{2\beta}}.
\end{align}
\end{lemma}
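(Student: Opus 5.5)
The estimate (\ref{420}) follows from the Lyapunov-type inequality announced in the introduction: combine (\ref{413}) with an interpolation that bounds $\mathscr{E}_1$ by a power of $\mathscr{D}_1$, using only the uniform $L^2$ bound (\ref{403}) on $(a,u)$.

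\textbf{Step 1: equivalences.} By Theorem \ref{1theo3} we have $\|(a,u)\|_{L^2}\le M$ for all $t\ge0$. For $k$ small, $\mathscr{E}_1\approx\|\partial_x(a,u)\|_{H^{\beta}}^2$. Also $\mathscr{D}_1\gtrsim\|\Lambda^{1+\beta}(a,u)\|_{L^2}^2$, since $s-2\beta+1=1-\beta\ge0$ when $s=1+\beta$. The top-order part of $\mathscr{E}_1$ is controlled directly: $\|\Lambda^{1+\beta}(a,u)\|_{L^2}^2\le C\mathscr{D}_1$. The lower part needs interpolation between $L^2$ and $\dot H^{1+\beta}$:
\begin{align*}
\|\Lambda(a,u)\|_{L^2}^2\le C\|(a,u)\|_{L^2}^{\frac{2\beta}{1+\beta}}\|\Lambda^{1+\beta}(a,u)\|_{L^2}^{\frac{2}{1+\beta}}\le C_M\mathscr{D}_1^{\frac{1}{1+\beta}}.
\end{align*}
Since $\mathscr{D}_1\le C\mathscr{E}_1\le C\delta^2$ is small, the term $\mathscr{D}_1$ is dominated by $\mathscr{D}_1^{\frac{1}{1+\beta}}$. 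Hence $\mathscr{E}_1\le C_M\mathscr{D}_1^{\frac{1}{1+\beta}}$, that is, $\mathscr{D}_1\ge c_M\mathscr{E}_1^{1+\beta}$.

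\textbf{Step 2: the ODE inequality.} Inserting this into (\ref{413}) gives
\begin{align*}
\frac{d}{dt}\mathscr{E}_1+c_M\mathscr{E}_1^{1+\beta}\le0.
\end{align*}
Then $\frac{d}{dt}\mathscr{E}_1^{-\beta}\ge\beta c_M$, so $\mathscr{E}_1(t)\le\left(\mathscr{E}_1(0)^{-\beta}+\beta c_M t\right)^{-\frac{1}{\beta}}\le C(1+t)^{-\frac{1}{\beta}}$. By the equivalence in Step 1, $\|\partial_x(a,u)\|_{H^{\beta}}\le C(1+t)^{-\frac{1}{2\beta}}$. If $\mathscr{E}_1$ vanishes at some time, it stays zero by (\ref{413}), so the bound holds trivially.

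\textbf{Main obstacle.} The only delicate point is confirming that $\mathscr{E}_1$ really is equivalent to $\|\partial_x(a,u)\|_{H^\beta}^2$ and nonnegative, i.e.\ that the cross term $2k\langle\Lambda^{\beta}a_x,\Lambda^{\beta}u\rangle_{H^{1-\beta}}$ is absorbed by $E_1+E_{1+\beta}$. This follows from Cauchy--Schwarz for $k$ small, exactly as in the proof of Theorem \ref{1theo1}. A second point is that the $L^2$ bound on $(a,u)$ is large (of size $M$), so the constants $c_M$, $C$ depend on $M$; this is harmless for the statement. No smallness of low frequencies is needed at this stage, which is why this rate serves as the starting point for the later iteration.
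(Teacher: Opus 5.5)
Your argument is the paper's: interpolate $\|\Lambda(a,u)\|_{L^2}$ between the uniform $L^2$ bound and $\|\Lambda^{1+\beta}(a,u)\|_{L^2}$ to get $\mathscr{D}_1\gtrsim\mathscr{E}_1^{1+\beta}$, then integrate $\frac{d}{dt}\mathscr{E}_1+c\,\mathscr{E}_1^{1+\beta}\le 0$.

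One step in Step 1 needs a different justification. The claim $\mathscr{D}_1\le C\mathscr{E}_1$ is false. $\mathscr{D}_1$ contains $\|\Lambda^{2}a\|_{L^2}^2$ and $\|\Lambda^{1+2\beta}u\|_{L^2}^2$, which $\mathscr{E}_1\approx\|\partial_x(a,u)\|_{H^\beta}^2$ does not control; for $H^{1+\beta}$ data, $\mathscr{D}_1(0)$ may even be infinite. So you cannot say $\mathscr{D}_1$ is small and hence dominated by $\mathscr{D}_1^{\frac{1}{1+\beta}}$.

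The fix is to use boundedness of the top-order piece itself. Theorem \ref{1theo3} gives $\|\Lambda^{1+\beta}(a,u)\|_{L^2}^2\le C\mathscr{E}_1\le C\delta^2$. Therefore $\|\Lambda^{1+\beta}(a,u)\|_{L^2}^2\le C\delta^{\frac{2\beta}{1+\beta}}\|\Lambda^{1+\beta}(a,u)\|_{L^2}^{\frac{2}{1+\beta}}\le C\mathscr{D}_1^{\frac{1}{1+\beta}}$, which is what the paper's inequality $\mathscr{D}_1\gtrsim\mathscr{E}_1^{1+\beta}$ implicitly uses. With this change, the rest of your proof (the ODE integration and the equivalence $\mathscr{E}_1\approx\|\partial_x(a,u)\|_{H^\beta}^2$ for small $k$) goes through as written.
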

\begin{proof}
By interpolation between Sobolev spaces, we get
\begin{align*}
\|\Lambda a\|_{L^{2}}^{2}\le C\|a\|_{L^2}^{\frac{2\beta}{1+\beta}}\|\Lambda^{1+\beta}a\|_{L^{2}}^{\frac{2}{1+\beta}}.
\end{align*}
Thanks to Theorem \ref{1theo3}, we obtain
\begin{align*}
\|\Lambda^{1+\beta}a\|_{L^{2}}^{2}\ge C\|\Lambda a\|_{L^{2}}^{2(1+\beta)},
\end{align*}
which together with Theorem \ref{1theo3} gives rise to
\begin{align}\label{421}
\frac{k\gamma}{2}\|\Lambda^{1+\beta}a\|_{H^{1-\beta}}^{2}\ge   C\|\Lambda^{1}a\|_{H^{1}}^{2(1+\beta)}.
\end{align}
Along the same line, we deduce
\begin{align}\label{422}
\|\Lambda^{1+\beta}u\|_{H^{\beta}}^{2}\ge  C\|\Lambda u\|_{H^{\beta}}^{2(1+\beta)}.
\end{align}
By inserting the estimates (\ref{421}) and (\ref{422}) into (\ref{413}), we obtain
\begin{align*}
\frac{d}{dt}\mathscr{E}_{1}+C\mathscr{E}_{1}^{1+\beta}\le 0.
\end{align*}
Then we arrive at
\begin{align*}
\mathscr{E}_{1}\le (1+t)^{-\frac{1}{\beta}}.
\end{align*}
Therefore, the desired bound (\ref{420}) holds true.
\end{proof}
Based on the Lemma \ref{4lemm2}, we are now ready to obtain $H^1$ optimal time decay rate. From this point on, we agree that all occurrences of $\delta$ and $C_2$ denote their positive powers. 
\begin{prop}{\label{4prop3}}
  Under the same conditions as in Theorem \ref{1theo3}, if additionally $\left(a_{0}, u_{0}\right) \in \dot{B}_{2, \infty}^{-\frac{1}{2}}$, then there exists $C>0$ such that for every $t>0$, there holds
\begin{align*}
\|\Lambda^{s_1}(a,u)\|_{H^{1+\beta-s_{1}}}\le C(1+t)^{-\frac{1+2s_{1}}{4\beta}},
\end{align*}
where $0\le s_{1}\le 1$.
\end{prop}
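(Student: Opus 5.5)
The plan is to follow the flow chart in the introduction. We combine the two energy inequalities $\frac{d}{dt}\mathscr{E}_0+\mathscr{D}_0\le 0$ (with $s=1+\beta$) and (\ref{413}) with the a priori decay from Lemma \ref{4lemm2}, and we run Schonbek's Fourier splitting twice. First we check that the estimate of Theorem \ref{1theo1}, i.e. (\ref{309}), still holds in this large-data setting. In (\ref{304})–(\ref{307}) every nonlinear term carries a factor $\|(a,u)\|_{L^\infty}$, $\|\partial_x(a,u)\|_{L^2}$ or a high-frequency norm. These are small by Theorem \ref{1theo3} ($\|(a,u)\|_{L^\infty}\le C\|(a,u)\|_{L^2}^{1/2}\|\partial_x(a,u)\|_{L^2}^{1/2}\le CM^{1/2}\delta(M)^{1/2}$). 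So, after possibly shrinking $\delta(M)$, we get $\frac{d}{dt}\mathscr{E}_0+\mathscr{D}_0\le0$ with $\mathscr{E}_0\approx\|(a,u)\|_{H^{1+\beta}}^2$.

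\textbf{First iteration.} With $S(t)=\{|\xi|^{2\beta}\le C_2(1+t)^{-1}\}$ we get, as in (\ref{313}),
\begin{align*}
\frac{d}{dt}\mathscr{E}_0+\frac{C_2}{1+t}\mathscr{E}_0\le \frac{C}{1+t}\int_{S(t)}|\widehat a|^2+|\widehat u|^2\,d\xi .
\end{align*}
We bound the right side through (\ref{315}). The data part gives $C(1+t)^{-\frac1{2\beta}}$ by $(a_0,u_0)\in\dot B^{-1/2}_{2,\infty}$, with no smallness needed. For the nonlinear part we use $\|\widehat F\|_{L^\infty}+\|\widehat H\|_{L^\infty}\le C\|(a,u)\|_{L^2}\big(\|\partial_x(a,u)\|_{L^2}+\|\Lambda^{2\beta}u\|_{L^2}\big)$, the uniform bound $\|(a,u)\|_{L^2}\le M$, and Lemma \ref{4lemm2}. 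This yields at least $|S(t)|\int_0^t(1+t')^{-\frac1{2\beta}}dt'\lesssim (1+t)^{1-\frac1\beta}$ times $|S(t)|^{1/2}$-type factors. Integrating the ODE then gives a first, weak rate, which we aim to make $\mathscr{E}_0\le C(1+t)^{-\frac{3}{4\beta}+1}$. This is positive decay since $\beta<\frac34$, and this is exactly where the restriction on $\beta$ enters. We feed this back once (a finite iteration, improving the $L^2$ factor each time) to reach $\mathscr{E}_0\le C(1+t)^{-\frac1{4\beta}}$. Next we use (\ref{413}) with the time weight $(1+t)^{\sigma}$ and Fourier splitting at the $\dot H^1$ level, with low frequencies controlled by $\|(a,u)\|_{L^2}^2|S(t)|^{1/\beta}$. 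This gives $\mathscr{E}_1\le C(1+t)^{-\frac5{4\beta}}$.

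\textbf{Propagation of negative regularity and second iteration.} With these rates, we repeat Step 1 of Proposition \ref{3prop2}: from (\ref{3111}) we get $M(t)^2\le CM(0)^2+CM(t)\int_0^t\|(F,H)\|_{L^1}dt'$. Here $\|(F,H)\|_{L^1}\lesssim \|(a,u)\|_{L^2}(\|\partial_x(a,u)\|_{L^2}+\|\Lambda^{2\beta}u\|_{L^2})\lesssim(1+t)^{-\frac1{8\beta}-\frac5{8\beta}}\cdot$(time-integrable part), which is integrable because $\frac{3}{4\beta}>1$. Hence $\|(a,u)\|_{\dot B^{-1/2}_{2,\infty}}\le C$ uniformly, without any smallness. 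Rerunning Step 2 of Proposition \ref{3prop2} with this bound (cf. (\ref{317})) now closes at the sharp rate $\mathscr{E}_0\le C(1+t)^{-\frac1{2\beta}}$. The $\dot H^1$ splitting applied to (\ref{413}) then gives $\mathscr{E}_1\le C(1+t)^{-\frac{3}{2\beta}}$, together with the weighted bound $(1+t)^{-1}\int_0^t(1+t')^{\frac3{2\beta}+1}\mathscr{D}_1dt'\le C$.

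\textbf{Top derivative.} Finally, as in (\ref{323})–(\ref{328}), we multiply the $\dot H^{1+\beta}$ estimate (\ref{411}) by $(1+t)^{\frac3{2\beta}+2}$ and integrate, using the weighted dissipation bound. This gives $E_{1+\beta}\le C(1+t)^{-\frac3{2\beta}-1}$. Interpolating between the $L^2$, $\dot H^1$ and $\dot H^{1+\beta}$ rates gives the stated bound for all $0\le s_1\le1$. The main obstacle is the first iteration. There the only smallness available is in derivatives, and we must check carefully that the nonlinear low-frequency term, estimated using only the $M$-bound and Lemma \ref{4lemm2}, produces a rate that genuinely decays and improves under iteration. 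I expect this to need $\beta<\frac34$ and possibly several intermediate steps rather than one.
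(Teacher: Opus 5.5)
Your proposal is correct and follows the paper's proof essentially step by step. The chain is: Fourier splitting on (\ref{309}) using only $\|(a,u)\|_{L^2}\le M$ and Lemma \ref{4lemm2}; the relevant bound is $\int_{S(t)}|\widehat F||\widehat a|\,d\xi\le |S(t)|^{1/2}\|F\|_{L^1}\|a\|_{L^2}$, which gives exactly $\mathscr{E}_0\lesssim(1+t)^{1-\frac{3}{4\beta}}$. This is then upgraded to $(1+t)^{-\frac{1}{4\beta}}$, followed by $\dot H^1$-level splitting on (\ref{413}) giving $(1+t)^{-\frac{5}{4\beta}}$, a uniform $\dot B^{-\frac12}_{2,\infty}$ bound from the integrability of $(1+t)^{-\frac{3}{4\beta}}$, the second splitting, and interpolation.

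The differences from the paper are minor. First, the paper does the upgrade to $(1+t)^{-\frac{1}{4\beta}}$ in one stroke, by a Gronwall argument on $N(t)=\sup_{0\le t'\le t}(1+t')^{\frac{1}{4\beta}}\mathscr{E}_0(t')$ with integrable kernel $(1+t')^{-\frac{3}{4\beta}}$. Your finite iteration also works, since each pass gains $\frac{3}{4\beta}-1$ in the exponent. Second, the paper uses the $H^{\beta}$-level $\mathscr{E}_0$ from (\ref{303})--(\ref{307}) rather than an $H^{1+\beta}$ one. Third, your top-derivative step belongs to Theorem \ref{1theo4} and is not needed here: $\|\Lambda^{1+\beta}(a,u)\|_{L^2}\lesssim\mathscr{E}_1^{1/2}\lesssim(1+t)^{-\frac{3}{4\beta}}$ already suffices for all $0\le s_1\le 1$.
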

\begin{proof}
It follows readily from (\ref{303})-(\ref{307}) that (\ref{309}) remains valid under the assumptions of Proposition \ref{4prop3}.
  Recall $S(t)=\left\{\xi:|\xi|^{2\beta}\le C_{2}(1+t)^{-1}\right\}$. By (\ref{309}), we get
  \begin{align}\label{423}
  \frac{d}{dt}\mathscr{E}_{0}(t)+\frac{C_{2}}{1+t}\left(k\gamma\|a\|_{H^{\beta}}^{2}+\|u\|_{H^{\beta}}^{2}\right)\leq \frac{C}{1+t}{\int_{S(t)}|\widehat{a}(\xi)|^{2}+|\widehat u}(\xi)|^{2}d\xi.
  \end{align}  
 The estimate of term on the right-hand side of (\ref{423}) follows a similar derivation of (\ref{315}). we have
\begin{align}\label{424}
\gamma|\widehat{a}|^{2}+|\widehat{u}|^{2}\le C(|\widehat{a}_{0}|^{2}+|\widehat{u}_{0}|^{2})+C\int_{0}^{t}|\widehat{F}\cdot\overline{\widehat{a}}|+|\widehat{H}\cdot\overline{\widehat{u}}|dt^{\prime}.
\end{align}
Along the same lines as (\ref{316}), we find
\begin{align}\label{425}
\int_{S(t)}\left(|\widehat{a_{0}}|^{2}+|\widehat{u_{0}}|^{2}\right)d\xi 
 \le C(1+t)^{-\frac{1}{2\beta}}\|(a_{0},u_{0})\|_{\dot{B}_{2,\infty}^{-\frac{1}{2}}}^{2}
\end{align}
Thanks to $\frac{1}{2}<\beta<\frac{3}{4}$, we arrive at
\begin{align}\label{426}
\int_{S(t)}\int_{0}^{t}|\widehat{F}\cdot\overline{\widehat{a}}|+|\widehat{H}\cdot\overline{\widehat{u}}|dt^{\prime}d\xi=& \int_{0}^{t}\int_{S(t)}|\widehat{F}\cdot\overline{\widehat{a}}|+|\widehat{H}\cdot\overline{\widehat{u}}|d\xi dt^{\prime} \\\nonumber
\le\ &C |S(t)|^{\frac{1}{2}}\int_{0}^{t}\|F\|_{L^1}\|a\|_{L^2}+\|H\|_{L^1}\|u\|_{L^2}dt^{\prime}\\\nonumber
\le\ &C\ (1+t)^{-\frac{1}{4\beta}}\int_{0}^{t}\left(\|a\|_{L^2}^2+\|u\|_{L^2}^2\right)\left(\|(a_x,u_x)\|_{L^2}+\|\Lambda^{2\beta}u\|_{L^2}\right)dt^{\prime}\\\nonumber
\le\ &C(1+t)^{-\frac{1}{4\beta}}\int_{0}^{t}(1+t^{\prime})^{-\frac{1}{2\beta}}dt^{\prime}\\\nonumber
\le\ &C(1+t)^{-\frac{3}{4\beta}+1}.\nonumber
\end{align}
Inserting (\ref{425}) and (\ref{426}) into (\ref{424}), we have
\begin{align}\label{427}
{\int_{S(t)}|\widehat{a}(\xi)|^{2}+|\widehat u}(\xi)|^{2}d\xi\le  C(1+t)^{-\frac{3}{4\beta}+1}.
\end{align}
Plugging (\ref{427}) into (\ref{423}) gives rise to 
\begin{align*}
  \frac{d}{dt}\mathscr{E}_{0}(t)+\frac{C_{2}}{1+t}\left(k\gamma\|a\|_{H^{\beta}}^{2}+\|u\|_{H^{\beta}}^{2}\right)\le C(1+t)^{-\frac{3}{4\beta}}.
  \end{align*}  
Consequently, we get the initial time decay rate
\begin{align}\label{428}
\mathscr{E}_{0}(t)\le C(1+t)^{-\frac{3}{4\beta}+1}.
  \end{align}
It follows from (\ref{423})-(\ref{426}) that
\begin{align*}
  \frac{d}{dt}\mathscr{E}_{0}(t)&+\frac{C_{2}}{1+t}\left(k\gamma\|a\|_{H^{\beta}}^{2}+\|u\|_{H^{\beta}}^{2}\right)\le C (1+t)^{-1-\frac{1}{2\beta}}\\\nonumber
  &+C(1+t)^{-1-\frac{1}{4\beta}}\int_{0}^{t}\left(\|a\|_{L^2}^2+\|u\|_{L^2}^2\right)\left(\|(a_x,u_x)\|_{L^2}+\|\Lambda^{2\beta}u\|_{L^2}\right)dt^{\prime},\nonumber
  \end{align*} 
which implies
\begin{align}\label{429}
(1+t)^{1+\frac{1}{4\beta}}\frac{d}{dt}\mathscr{E}_{0}(t)&+C_{2}(1+t)^{\frac{1}{4\beta}}\left(k\gamma\|a\|_{H^{\beta}}^{2}+\|u\|_{H^{\beta}}^{2}\right)\\\nonumber
&\le C(1+t)^{-\frac{1}{4\beta}}+C\int_{0}^{t}\left(\|a\|_{L^2}^2+\|u\|_{L^2}^2\right)\left(\|(a_x,u_x)\|_{L^2}+\|\Lambda^{2\beta}u\|_{L^2}\right)dt^{\prime}.\nonumber
\end{align}
Integrating (\ref{429}) in time on $[0,t]$, we get
\begin{align*}
(1+t)^{\frac{1}{4\beta}}\mathscr{E}_{0}(t)\le C+C\int_{0}^{t}\left(\|a\|_{L^2}^2+\|u\|_{L^2}^2\right)\left(\|(a_x,u_x)\|_{L^2}+\|\Lambda^{2\beta}u\|_{L^2}\right)dt^{\prime}.
\end{align*}
We set $\displaystyle N(t)=\sup_{0\le t^{\prime}\le t}(1+t^{\prime})^{\frac{1}{4\beta}}\mathscr{E}_{0}(t^{\prime})$, then we find
\begin{align*}
N(t)\le C+C\int_{0}^{t}(1+t^{\prime})^{-\frac{1}{4\beta}}N(t^{\prime})\left(\|(a_x,u_x)\|_{L^2}+\|\Lambda^{2\beta}u\|_{L^2}\right)dt^{\prime}.
\end{align*}
Applying Gronwall's inequality yields for any $t>0$, $N(t)<\infty$, which gives rise to
\begin{align}\label{430}
  \mathscr{E}_{0}(t)\le C(1+t)^{-\frac{1}{4\beta}}.
\end{align}
From (\ref{430}), we know that
  \begin{align}\label{431}
  \frac{d}{dt}\mathscr{E}_{1}(t)+\frac{C_{2}}{1+t}\left(k\gamma\|a_x\|_{H^{\beta}}^{2}+\|u_x\|_{H^{\beta}}^{2}\right)&\leq \frac{C}{1+t}\int_{S(t)}|\xi|^{2}\left(|\widehat{a}(\xi)|^{2}+|\widehat {u}(\xi)|^{2}\right)d\xi\\\nonumber
  &\leq C(1+t)^{-\frac{5}{4\beta}-1}.
  \end{align}  
  Then we deduce that
  \begin{align}\label{432}
  \mathscr{E}_{1}\le C(1+t)^{-\frac{5}{4\beta}}.
\end{align}
Next we will prove the solution of (\ref{eq1}) belongs to some negative index Besov space. From (\ref{3105})-(\ref{3111}), we find
\begin{align*}
M^{2}(t)\leq CM^{2}(0)+CM(t)\int_{0}^{t}\left(\|F\|_{\dot{B}^{-\frac{1}{2}}_{2,\infty}}+\|H\|_{\dot{B}^{-\frac{1}{2}}_{2,\infty}}\right)dt^{\prime},
\end{align*}
where $\displaystyle M(t)=\sup_{0\le t^{\prime}\le t}\left(\gamma\|a\|_{\dot{B}^{-\frac{1}{2}}_{2,\infty}}+\|u\|_{\dot{B}^{-\frac{1}{2}}_{2,\infty}}\right)$.
Using the fact $ L^{1}\hookrightarrow\dot{B}^{-\frac{1}{2}}_{2,\infty}$, (\ref{430}) and (\ref{432}), we conclude, for any $t> 0$, that
\begin{align}\label{433}
\int_{0}^{t}\left(\|F\|_{\dot{B}^{-\frac{1}{2}}_{2,\infty}}+\|H\|_{\dot{B}^{-\frac{1}{2}}_{2,\infty}}\right)dt^{\prime}&\le C  \int_{0}^{t}\left(\|F\|_{L^1}+\|H\|_{L^1}\right)dt^{\prime}\\\nonumber
&\le C  \int_{0}^{t}(\|a\|_{L^2}+\|u\|_{L^2})\left(\|(a_x,u_x)\|_{L^2}+\|\Lambda^{2\beta} u\|_{L^2}\right)dt^{\prime}\\\nonumber
&\le C  \int_{0}^{t}(1+t^{\prime})^{-\frac{3}{4\beta}}dt^{\prime}<+\infty.
\end{align}
Hence, we deduce $M(t)<C$. From this we can obtain the optimal time decay rate for $E_{0}$. Using (\ref{433}), we arrive at
\begin{align}\label{434}
\int_{S(t)}\int_{0}^{t}|\widehat{F}\cdot\overline{\widehat{a}}|+|\widehat{H}\cdot\overline{\widehat{u}}|dt^{\prime}d\xi=& \int_{0}^{t}\int_{S(t)}|\widehat{F}\cdot\overline{\widehat{a}}|+|\widehat{H}\cdot\overline{\widehat{u}}|d\xi dt^{\prime} \\\nonumber
\le\ & C\int_{0}^{t}\left(\|F\|_{L^1}\int_{S(t)}|{\widehat{a}}|d\xi+\|H\|_{L^1}\int_{S(t)}|{\widehat{u}}|d\xi\right
) dt^{\prime} \\\nonumber
\le\ &  C(1+t)^{-\frac{1}{4\beta}}\int_{0}^{t}\left[\left(\|F\|_{L^1}+\|H\|_{L^1}\right)\left(\int_{S(t)}|\widehat{a}|^{2}+|\widehat{u}|^{2}d\xi\right)^{\frac{1}{2}}\right]dt^{\prime}\\\nonumber
\le\  &C(1+t)^{-\frac{1}{2\beta}},\nonumber
\end{align}
where we use the fact
\begin{align*}
\left(\int_{S(t)}|\widehat{a}|^{2}+|\widehat{u}|^{2}d\xi\right)^{\frac{1}{2}}dt^{\prime}\le C(1+t)^{-\frac{1}{4\beta}}M(t).
\end{align*}
Thanks to (\ref{423}), (\ref{425}) and (\ref{434}), we derive that
 \begin{align}\label{435}
  \frac{d}{dt}\mathscr{E}_{0}(t)+\frac{C_{2}}{1+t}\left(k\gamma\|a\|_{H^{\beta}}^{2}+\|u\|_{H^{\beta}}^{2}\right)\leq C(1+t)^{-\frac{1}{2\beta}-1},
  \end{align} 
  which implies that
\begin{align}\label{436}
 \mathscr{E}_{0}(t)\le  C(1+t)^{-\frac{1}{2\beta}}.
  \end{align}
  According to (\ref{431}) and (\ref{436}), we infer that
  \begin{align}\label{437}
 \mathscr{E}_{1}(t)\le C(1+t)^{-\frac{3}{2\beta}}.
  \end{align}
 Combining (\ref{436}), (\ref{437}) and Sobolev interpolation, we thus complete the proof of Proposition \ref{4prop3}.
  \end{proof}
\textbf{Proof of Theorem \ref{1theo4}:}
\begin{proof}
  By (\ref{408}) and (\ref{409}), we know
  \begin{align}\label{438}
  &\frac{1}{2}\frac{d}{dt}\|\sqrt{\gamma} \Lambda^{1+\beta}a\|_{L^2}^{2}+\gamma\langle\Lambda^{1+\beta}u_x,\Lambda^{1+\beta} a\rangle\\\nonumber
  =&-\gamma\langle\Lambda^{1+\beta}(au)_x,\Lambda^{1+\beta} a\rangle\\\nonumber
=&-\gamma\langle[\Lambda^{1+\beta},u\partial_x]a,\Lambda^{1+\beta}a\rangle+\frac{\gamma}{2}\langle u_x, |\Lambda^{1+\beta}a|^{2}\rangle-\gamma\langle\Lambda^{2\beta}(au_x),\Lambda^{2}a\rangle\\\nonumber
\le &\ C\|\Lambda^{1+\beta}a\|_{L^{2}}^{2}\|u_x\|_{L^{\infty}}+C\|\Lambda^{1+\beta}a\|_{L^{2}}^{2}\| u_x\|_{L^{\infty}}\\\nonumber
&+C\|\Lambda^{2}a \|_{L^{2}}\|\Lambda^{1+2\beta}u \|_{L^{2}}\|a \|_{L^{\infty}}+C\|\Lambda^{2\beta}a\|_{L^{2}}\|\Lambda^{2}a\|_{L^{2}}\| u_x\|_{L^{\infty}}\\\nonumber
\le &\ C(1+t)^{-\frac{1}{\beta}}\mathscr{D}_{1}+\frac{1}{100}\|\Lambda^{1+2\beta}u \|_{L^{2}}^{2},
  \end{align}
  and
 \begin{align}\label{439}
&\frac{1}{2}\frac{d}{dt}\|\Lambda^{1+\beta} u\|_{L^2}^{2}+\gamma\langle\Lambda^{1+\beta}a_x,\Lambda^{1+\beta} u\rangle+\|\Lambda^{1+2\beta}u\|_{L^2}^2\\\nonumber
=&\langle \Lambda^{1+\beta} (k(a)a_x),\Lambda^{1+\beta} u\rangle-\langle \Lambda^{1+\beta}(u
 u_x),\Lambda^{1+\beta} u\rangle+\left\langle\Lambda^{1+\beta}\left(\frac{a}{1+a}(-\Delta)^{\beta}u\right),\Lambda^{1+\beta} u\right\rangle\\\nonumber
\le\ &C \|\Lambda^{1+2\beta}u\|_{L^{2}}(\|\Lambda(a,u)\|_{L^{4}}^{2}+\|\Lambda^{2}(a,u)\|_{L^{2}}\|(a,u) \|_{L^{\infty}}+\|\Lambda^{1+\beta}u\|_{L^{2}}\|u_x\|_{L^{\infty}}\\\nonumber
&\|\Lambda^{2\beta}u\|_{L^{\frac{1}{1-\beta}}}\|\Lambda^{1+\beta}a\|_{L^{\frac{1}{\beta-\frac{1}{2}}}}+\|\Lambda^{1+2\beta}u\|_{L^{2}}\|(a,u)\|_{L^{\infty}})\\\nonumber
\le\ & C\|\Lambda^{1+2\beta}u\|_{L^{2}}\left(\|\Lambda^{\frac{5}{4}}(a,u)\|_{L^{2}}^{2}+\|\Lambda^{2}(a,u)\|_{L^{2}}\|(a,u) \|_{L^{\infty}}+\|\Lambda^{3\beta-\frac{1}{2}}u\|_{L^2}\|\Lambda^{2}a\|_{L^{2}}\right.\\\nonumber
&\left.+\|\Lambda^{1+2\beta}u\|_{L^{2}}\|(a,u)\|_{L^{\infty}}\right)\\\nonumber
\le\ & \left(\frac{1}{100}+\delta\right)\|\Lambda^{1+2\beta}u\|_{L^{2}}^{2}+C\left(\|(a,u)\|_{L^2}^{\frac{4\beta-1}{1+\beta}}\|\Lambda^{1+\beta}(a,u)\|_{L^2}^{\frac{3-2\beta}{1+\beta}}
+\|(a,u) \|_{L^{\infty}}^{2}\right.\\\nonumber
&\left.+\|\Lambda^{3\beta-\frac{1}{2}}u\|_{L^2}^{2}\right)\mathscr{D}_{1}\\\nonumber
\le\ &\left(\frac{1}{100}+\delta\right)\|\Lambda^{1+2\beta}u\|_{L^{2}}^{2}+C(1+t)^{-1}\mathscr{D}_{1}.
\end{align}
From (\ref{439}) and (\ref{439}), it is clear that
\begin{align}\label{440}
\frac{d}{dt}\| \Lambda^{1+\beta}(\sqrt{\gamma}a,u)\|_{L^2}^{2}+\|\Lambda^{1+2\beta}u\|_{L^2}^2&\le C(1+t)^{-1}\mathscr{D}_{1}.
\end{align}
Arguing similarly as (\ref{322}) gives
\begin{align}\label{441}
\frac{1}{1+t}\int_{0}^{t}(1+t^{\prime})^{\frac{3}{2\beta}+1}\mathscr{D}_{1}dt^{\prime}\le C.
\end{align}
Multiplying (\ref{440}) by $(1+t)^{\frac{3}{2\beta}+2}$, one can arrive at
\begin{align}\label{442}
\frac{d}{dt}\left[(1+t)^{\frac{3}{2\beta}+2}\| \Lambda^{1+\beta}(\sqrt{\gamma}a,u)\|_{L^2}^{2}\right]
\le\ &C(1+t)^{\frac{3}{2\beta}+1}\mathscr{D}_{1}+C(1+t)^{\frac{3}{2\beta}+1}\| \Lambda^{1+\beta}(\sqrt{\gamma}a,u)\|_{L^2}^{2}\\\nonumber
\le\ &C(1+t)^{\frac{3}{2\beta}+1}\mathscr{D}_{1},
\end{align}
Integrating (\ref{442}) over $[0,t]$ and using (\ref{441}),
 we infer that
\begin{align*}
(1+t)^{\frac{3}{2\beta}+2}\| \Lambda^{1+\beta}(\sqrt{\gamma}a,u)\|_{L^2}^{2}
\le C\int_{0}^{t}(1+t)^{\frac{3}{2\beta}+1}\mathscr{D}_{1}dt^{\prime},
\end{align*}
so that we get
\begin{align*}
(1+t)^{\frac{3}{2\beta}+1}\| \Lambda^{1+\beta}(\sqrt{\gamma}a,u)\|_{L^2}^{2}
\le \frac{C}{1+t}\int_{0}^{t}(1+t^{\prime})^{\frac{3}{2\beta}+1}\mathscr{D}_{1}dt^{\prime}\le C.
\end{align*}
As a result, it comes out
\begin{align*}
\| \Lambda^{1+\beta}(\sqrt{\gamma}a,u)\|_{L^2}^{2}
\le C(1+t)^{-\frac{3}{2\beta}-1},
\end{align*}
which together with Proposition \ref{4prop3} completes the proof of Theorem \ref{1theo4}. 
\end{proof}

\vspace{3mm}

\textbf{Acknowledgments}
This work was partially supported by the National Natural Science Foundation of
  China (No.12571261).

\vspace{2mm}

\textbf{Conflict of interest.} The authors do not have any possible conflicts of interest.

\vspace{2mm}

\textbf{Data availability statement.}
 Data sharing is not applicable to this article, as no data sets were generated or analyzed during the current study.

\phantomsection
\addcontentsline{toc}{section}{\refname}
\bibliographystyle{abbrv}
\bibliography{Manuscript1209}

@book {Bahouri2011,
    AUTHOR = {Bahouri, Hajer and Chemin, Jean-Yves and Danchin, Rapha\"{e}l},
     TITLE = {Fourier analysis and nonlinear partial differential equations},
    SERIES = {Grundlehren der Mathematischen Wissenschaften [Fundamental
              Principles of Mathematical Sciences]},
    VOLUME = {343},
 PUBLISHER = {Springer, Heidelberg},
      YEAR = {2011},
     PAGES = {xvi+523},
      ISBN = {978-3-642-16829-1},
   MRCLASS = {35-02 (35L72 35Q30 42-02 42B37 76B03 76D03 76N10)},
  MRNUMBER = {2768550},
MRREVIEWER = {Peter R. Massopust},
       DOI = {10.1007/978-3-642-16830-7},
       URL = {https://doi.org/10.1007/978-3-642-16830-7},
}

@article {Schonbek1985,
    AUTHOR = {Schonbek, Maria Elena},
     TITLE = {{$L^2$} decay for weak solutions of the {N}avier-{S}tokes
              equations},
   JOURNAL = {Arch. Rational Mech. Anal.},
  FJOURNAL = {Archive for Rational Mechanics and Analysis},
    VOLUME = {88},
      YEAR = {1985},
    NUMBER = {3},
     PAGES = {209--222},
      ISSN = {0003-9527},
   MRCLASS = {35Q10 (76D05)},
  MRNUMBER = {775190},
MRREVIEWER = {Yoshikazu Giga},
       DOI = {10.1007/BF00752111},
       URL = {http://dx.doi.org/10.1007/BF00752111},
}

@article {Schonbek1991,
    AUTHOR = {Schonbek, Maria E.},
     TITLE = {Lower bounds of rates of decay for solutions to the
              {N}avier-{S}tokes equations},
   JOURNAL = {J. Amer. Math. Soc.},
  FJOURNAL = {Journal of the American Mathematical Society},
    VOLUME = {4},
      YEAR = {1991},
    NUMBER = {3},
     PAGES = {423--449},
      ISSN = {0894-0347},
   MRCLASS = {35Q30 (35B40 76D05)},
  MRNUMBER = {1103459},
MRREVIEWER = {Michael Wiegner},
       DOI = {10.2307/2939262},
       URL = {http://dx.doi.org/10.2307/2939262},
}

@article {Li2011Large,
    AUTHOR = {Li, Hai Liang and Zhang, Ting},
     TITLE = {Large time behavior of isentropic compressible
              {N}avier-{S}tokes system in {$R^3$}},
   JOURNAL = {Math. Methods Appl. Sci.},
  FJOURNAL = {Mathematical Methods in the Applied Sciences},
    VOLUME = {34},
      YEAR = {2011},
    NUMBER = {6},
     PAGES = {670--682},
      ISSN = {0170-4214},
   MRCLASS = {35Q35 (35B40 76N10)},
  MRNUMBER = {2814719},
MRREVIEWER = {Fa-gui Liu},
       DOI = {10.1002/mma.1391},
       URL = {https://doi.org/10.1002/mma.1391},
}

@article {Xu2019,
    AUTHOR = {Xu, Jiang},
     TITLE = {A low-frequency assumption for optimal time-decay estimates to
              the compressible {N}avier-{S}tokes equations},
   JOURNAL = {Comm. Math. Phys.},
  FJOURNAL = {Communications in Mathematical Physics},
    VOLUME = {371},
      YEAR = {2019},
    NUMBER = {2},
     PAGES = {525--560},
      ISSN = {0010-3616},
   MRCLASS = {35Q35 (35B40)},
  MRNUMBER = {4019913},
MRREVIEWER = {Beno\^{\i}t P. Desjardins},
       DOI = {10.1007/s00220-019-03415-6},
       URL = {https://doi.org/10.1007/s00220-019-03415-6},
}

@article {1959On,
    AUTHOR = {Nirenberg, L.},
     TITLE = {On elliptic partial differential equations},
   JOURNAL = {Ann. Scuola Norm. Sup. Pisa Cl. Sci. (3)},
  FJOURNAL = {Annali della Scuola Normale Superiore di Pisa. Classe di
              Scienze. Serie III},
    VOLUME = {13},
      YEAR = {1959},
     PAGES = {115--162},
      ISSN = {0391-173X},
   MRCLASS = {35.00},
  MRNUMBER = {109940},
MRREVIEWER = {L. Garding},
}

@book {2004Feireisl,
	AUTHOR = {Feireisl, Eduard},
	TITLE = {Dynamics of viscous compressible fluids},
	SERIES = {Oxford Lecture Series in Mathematics and its Applications},
	VOLUME = {26},
	PUBLISHER = {Oxford University Press, Oxford},
	YEAR = {2004},
	PAGES = {xii+212},
	ISBN = {0-19-852838-8},
	MRCLASS = {76N10 (35Q35 76N15)},
	MRNUMBER = {2040667},
	MRREVIEWER = {Piotr Bogus\l aw Mucha},
}

@book {1998Lions,
	AUTHOR = {Lions, Pierre Louis},
	TITLE = {Mathematical topics in fluid mechanics. {V}ol. 2},
	SERIES = {Oxford Lecture Series in Mathematics and its Applications},
	VOLUME = {10},
	NOTE = {Compressible models,
	Oxford Science Publications},
	PUBLISHER = {The Clarendon Press, Oxford University Press, New York},
	YEAR = {1998},
	PAGES = {xiv+348},
	ISBN = {0-19-851488-3},
	MRCLASS = {76-02 (35-02 35Q30 76N10)},
	MRNUMBER = {1637634},
	MRREVIEWER = {Denis Serre},
}

@book {1996Lions,
	AUTHOR = {Lions, Pierre Louis},
	TITLE = {Mathematical topics in fluid mechanics. {V}ol. 1},
	SERIES = {Oxford Lecture Series in Mathematics and its Applications},
	VOLUME = {3},
	NOTE = {Incompressible models,
	Oxford Science Publications},
	PUBLISHER = {The Clarendon Press, Oxford University Press, New York},
	YEAR = {1996},
	PAGES = {xiv+237},
	ISBN = {0-19-851487-5},
	MRCLASS = {76-02 (35Q30 35Q35 76D05)},
	MRNUMBER = {1422251},
	MRREVIEWER = {Denis Serre},
}

@article{kato1,
author = {Kato, Tosio and Ponce, Gustavo},
title = {Commutator estimates and the euler and navier-stokes equations},
journal = {Commun. Pure Appl. Math.},
volume = {41},
number = {7},
pages = {891-907},
year = {1988},
}

@article{1979Matsumura,
  title={The initial value problem for the equations of motion of compressible viscous and heat-conductive fluids},
  author={ Matsumura, A  and  Nishida, T },
  journal={Proc. Jpn. Acad.,Ser. A,Math. Sci.},
  volume={55},
  number={9},
  doi = {https://doi.org/10.3792/pjaa.55.337},
  pages={17408},
  year={1979},
}

@article{1980Matsumura,
  title={The initial value problem for the equations of motion of viscous and heat-conductive gases},
  author={ Matsumura, Akitaka  and  Nishida, Takaaki },
  journal={J. Math. Kyoto Univ.},
doi={https://doi.org/10.1215/kjm/1250522322},
  volume={20},
  pages={67-104},
  year={1980},
}

@article{PONCE1985399,
title = {Global existence of small solutions to a class of nonlinear evolution equations},
journal = {Nonlinear Anal. Theory Methods Appl.},
volume = {9},
number = {5},
pages = {399-418},
year = {1985},
issn = {0362-546X},
doi = {https://doi.org/10.1016/0362-546X(85)90001-X},
url = {https://www.sciencedirect.com/science/article/pii/0362546X8590001X},
author = {Gustavo Ponce},
}

@article{xin1988blowup,
  title={Blowup of smooth solutions to the compressible {N}avier-{S}tokes equation with compact density},
  author={Xin, Zhouping},
  journal={Comm. Pure Appl. Math.},
  volume={51},
  number={3},
doi = {https://doi.org/10.1002/(SICI)1097-0312(199803)51:3<229::AID-CPA1>3.0.CO;2-C},
url = {https://onlinelibrary.wiley.com/doi/abs/10.1002/%28SICI%291097-0312%28199803%2951%3A3%3C229%3A%3AAID-CPA1%3E3.0.CO%3B2-C},
  pages={299-440},
  year={1988},
  publisher={Wiley Online Library},
}

@article {MR1779621,
    AUTHOR = {Danchin, R.},
     TITLE = {Global existence in critical spaces for compressible
              {N}avier-{S}tokes equations},
   JOURNAL = {Invent. Math.},
  FJOURNAL = {Inventiones Mathematicae},
    VOLUME = {141},
      YEAR = {2000},
    NUMBER = {3},
     PAGES = {579--614},
      ISSN = {0020-9910,1432-1297},
   MRCLASS = {76N10 (35Q30)},
  MRNUMBER = {1779621},
MRREVIEWER = {Kevin\ R.\ Zumbrun},
       DOI = {10.1007/s002220000078},
       URL = {https://doi.org/10.1007/s002220000078},
}

@article{Guo01012012,
author = {Yan Guo and Yanjin Wang},
title = {Decay of Dissipative Equations and Negative Sobolev Spaces},
journal = {Communications in Partial Differential Equations},
volume = {37},
number = {12},
pages = {2165--2208},
year = {2012},
publisher = {Taylor \& Francis},
doi = {10.1080/03605302.2012.696296},
URL = {https://doi.org/10.1080/03605302.2012.696296},
eprint = {  https://doi.org/10.1080/03605302.2012.696296}
}

@article {MR2675485,
    AUTHOR = {Chen, Qionglei and Miao, Changxing and Zhang, Zhifei},
     TITLE = {Global well-posedness for compressible {N}avier-{S}tokes
              equations with highly oscillating initial velocity},
   JOURNAL = {Comm. Pure Appl. Math.},
  FJOURNAL = {Communications on Pure and Applied Mathematics},
    VOLUME = {63},
      YEAR = {2010},
    NUMBER = {9},
     PAGES = {1173--1224},
      ISSN = {0010-3640,1097-0312},
   MRCLASS = {35B65 (35Q35 76N10)},
  MRNUMBER = {2675485},
MRREVIEWER = {Paolo\ Maremonti},
       DOI = {10.1002/cpa.20325},
       URL = {https://doi.org/10.1002/cpa.20325},
}

@article{DUAN2007220,
title = {Optimal {${L}^p$}–{${L}^q$} convergence rates for the compressible {N}avier–{S}tokes equations with potential force},
journal = {J. Differential Equations},
volume = {238},
number = {1},
pages = {220-233},
year = {2007},
issn = {0022-0396},
doi = {https://doi.org/10.1016/j.jde.2007.03.008},
url = {https://www.sciencedirect.com/science/article/pii/S0022039607000836},
author = {Renjun Duan and Hongxia Liu and Seiji Ukai and Tong Yang}
}

@article {MR4188989,
    AUTHOR = {Xin, Zhouping and Xu, Jiang},
     TITLE = {Optimal decay for the compressible {N}avier-{S}tokes equations
              without additional smallness assumptions},
   JOURNAL = {J. Differential Equations},
  FJOURNAL = {Journal of Differential Equations},
    VOLUME = {274},
      YEAR = {2021},
     PAGES = {543--575},
      ISSN = {0022-0396,1090-2732},
   MRCLASS = {76N06 (35K65 35L65 35Q30)},
  MRNUMBER = {4188989},
MRREVIEWER = {Piotr\ Biler},
       DOI = {10.1016/j.jde.2020.10.021},
       URL = {https://doi.org/10.1016/j.jde.2020.10.021},
}

@article {MR3609245,
    AUTHOR = {Danchin, Rapha\"el and Xu, Jiang},
     TITLE = {Optimal time-decay estimates for the compressible
              {N}avier-{S}tokes equations in the critical {$L^p$} framework},
   JOURNAL = {Arch. Ration. Mech. Anal.},
  FJOURNAL = {Archive for Rational Mechanics and Analysis},
    VOLUME = {224},
      YEAR = {2017},
    NUMBER = {1},
     PAGES = {53--90},
      ISSN = {0003-9527,1432-0673},
   MRCLASS = {35Q35 (35B40 76N10)},
  MRNUMBER = {3609245},
MRREVIEWER = {Olga\ S.\ Rozanova},
       DOI = {10.1007/s00205-016-1067-y},
       URL = {https://doi.org/10.1007/s00205-016-1067-y},
}

@article {MR2679372,
    AUTHOR = {Charve, Fr\'ed\'eric and Danchin, Rapha\"el},
     TITLE = {A global existence result for the compressible
              {N}avier-{S}tokes equations in the critical {$L^p$} framework},
   JOURNAL = {Arch. Ration. Mech. Anal.},
  FJOURNAL = {Archive for Rational Mechanics and Analysis},
    VOLUME = {198},
      YEAR = {2010},
    NUMBER = {1},
     PAGES = {233--271},
      ISSN = {0003-9527,1432-0673},
   MRCLASS = {35Q30 (76N10)},
  MRNUMBER = {2679372},
MRREVIEWER = {Wengu\ Chen},
       DOI = {10.1007/s00205-010-0306-x},
       URL = {https://doi.org/10.1007/s00205-010-0306-x},
}

@article {MR2847531,
    AUTHOR = {Haspot, Boris},
     TITLE = {Existence of global strong solutions in critical spaces for
              barotropic viscous fluids},
   JOURNAL = {Arch. Ration. Mech. Anal.},
  FJOURNAL = {Archive for Rational Mechanics and Analysis},
    VOLUME = {202},
      YEAR = {2011},
    NUMBER = {2},
     PAGES = {427--460},
      ISSN = {0003-9527,1432-0673},
   MRCLASS = {76N10 (35D35 35L45 35L60 35Q35)},
  MRNUMBER = {2847531},
MRREVIEWER = {Magali\ L\'ecureux-Mercier},
       DOI = {10.1007/s00205-011-0430-2},
       URL = {https://doi.org/10.1007/s00205-011-0430-2},
}

@article{li2022,
      title={Non-uniqueness for the hypo-viscous compressible {N}avier-{S}tokes equations}, 
      author={Yachun Li and Peng Qu and Zirong Zeng and Deng Zhang},
      year={2022},
      JOURNAL={arXiv:2212.05844},
      url={https://arxiv.org/abs/2212.05844}, 
}

@article {MR4643428,
    AUTHOR = {Wang, Shu and Zhang, Shuzhen},
     TITLE = {The initial value problem for the equations of motion of
              fractional compressible viscous fluids},
   JOURNAL = {J. Differential Equations},
  FJOURNAL = {Journal of Differential Equations},
    VOLUME = {377},
      YEAR = {2023},
     PAGES = {369--417},
      ISSN = {0022-0396,1090-2732},
   MRCLASS = {35Q70 (35L65 76N10)},
  MRNUMBER = {4643428},
       DOI = {10.1016/j.jde.2023.09.012},
       URL = {https://doi.org/10.1016/j.jde.2023.09.012},
}

@article {MR4897629,
    AUTHOR = {Wang, Shu and Zhang, Shuzhen},
     TITLE = {The initial value problem of the fractional compressible
              {N}avier-{S}tokes-{P}oisson system},
   JOURNAL = {J. Differential Equations},
  FJOURNAL = {Journal of Differential Equations},
    VOLUME = {438},
      YEAR = {2025},
     PAGES = {113359},
      ISSN = {0022-0396,1090-2732},
   MRCLASS = {35Q35 (35A01 35B40 35R11)},
  MRNUMBER = {4897629},
MRREVIEWER = {Yonghui\ Zhou},
       DOI = {10.1016/j.jde.2025.113359},
       URL = {https://doi.org/10.1016/j.jde.2025.113359},
}

@article {MR4549954,
    AUTHOR = {Chen, Yuhui and Li, Minling and Yao, Qinghe and Yao, Zhengan},
     TITLE = {The sharp time-decay rates for one-dimensional compressible
              isentropic {N}avier-{S}tokes and magnetohydrodynamic flows},
   JOURNAL = {Sci. China Math.},
  FJOURNAL = {Science China. Mathematics},
    VOLUME = {66},
      YEAR = {2023},
    NUMBER = {3},
     PAGES = {475--502},
      ISSN = {1674-7283,1869-1862},
   MRCLASS = {35Q30 (35B40)},
  MRNUMBER = {4549954},
MRREVIEWER = {Pavel\ I.\ Naumkin},
       DOI = {10.1007/s11425-021-1937-9},
       URL = {https://doi.org/10.1007/s11425-021-1937-9},
}

@article {MR4884564,
    AUTHOR = {Stefanov, Atanas and Wu, Jiahong and Xu, Xiaojing and Ye,
              Zhuan},
     TITLE = {Global regularity results of the 2{D} fractional {B}oussinesq
              equations},
   JOURNAL = {Math. Ann.},
  FJOURNAL = {Mathematische Annalen},
    VOLUME = {391},
      YEAR = {2025},
    NUMBER = {4},
     PAGES = {5965--6012},
      ISSN = {0025-5831,1432-1807},
   MRCLASS = {35Q35 (35B65 76D03)},
  MRNUMBER = {4884564},
MRREVIEWER = {Qiwei\ Wu},
       DOI = {10.1007/s00208-024-03073-7},
       URL = {https://doi.org/10.1007/s00208-024-03073-7},
}

@article{18M1167905,
author = {Li, Jinkai},
title = {Global Well-Posedness of the One-Dimensional Compressible {N}avier--{S}tokes Equations with Constant Heat Conductivity and Nonnegative Density},
   JOURNAL = {SIAM J. Math. Anal.},
  FJOURNAL = {SIAM Journal on Mathematical Analysis},
volume = {51},
number = {5},
pages = {3666-3693},
year = {2019},
doi = {10.1137/18M1167905},
URL = { https://doi.org/10.1137/18M1167905},
eprint = {   https://doi.org/10.1137/18M1167905}
,
}

@article{Li2016Some,
  title={Some uniform estimates and large-time behavior of solutions to one-dimensional compressible Navier-Stokes system in unbounded domains with large data},
  author={Li, Jing and Liang, Zhilei},
  journal={Arch. Ration. Mech. Anal.},
  volume={220},
  number={3},
  pages={1195--1208},
  year={2016},
  publisher={Springer},
doi = {110.1007/s00205-015-0952-0},
URL = {https://doi.org/10.1007/s00205-015-0952-0}
}

@article{DavidHoff,
 ISSN = {00029947},
 URL = {http://www.jstor.org/stable/2000785},
 author = {David Hoff},
 journal = {Trans. Amer. Math. Soc.},
 number = {1},
 pages = {169--181},
 publisher = {American Mathematical Society},
 title = {Global Existence for $1{D}$, Compressible, Isentropic Navier-Stokes Equations with Large Initial Data},
 urldate = {2026-02-24},
 volume = {303},
 year = {1987}
}

@article{Kanel1979,
  author  = {Kanel', Ya. I.},
  year    = {1979},
  title   = {Cauchy problem for the equations of gasdynamics with viscosity},
  journal = {Sib. Math. J.},
  volume  = {20},
  number  = {2},
  pages   = {208--218},
  doi     = {10.1007/BF00970025}
}

@article{Kazhikhov1979,
  author  = {Kazhikhov, A. V.},
  title   = {Stabilization of solutions of the initial-boundary value problem for barotropic viscous fluid equations},
  journal = {Differ. Uravn.},
  year    = {1979},
  volume  = {15},
  number  = {4},
  pages   = {662--667},
}

@article{KAZHIKHOV1977273,
title = {Unique global solution with respect to time of initial-boundary value problems for one-dimensional equations of a viscous gas},
journal = {J. Appl. Math. Mech.},
volume = {41},
number = {2},
pages = {273-282},
year = {1977},
issn = {0021-8928},
doi = {https://doi.org/10.1016/0021-8928(77)90011-9},
url = {https://www.sciencedirect.com/science/article/pii/0021892877900119},
author = {A.V. Kazhikhov and V.V. Shelukhin}
}

@article {MR1671920,
    AUTHOR = {Jiang, Song},
     TITLE = {Large-time behavior of solutions to the equations of a
              one-dimensional viscous polytropic ideal gas in unbounded
              domains},
   JOURNAL = {Comm. Math. Phys.},
  FJOURNAL = {Communications in Mathematical Physics},
    VOLUME = {200},
      YEAR = {1999},
    NUMBER = {1},
     PAGES = {181--193},
      ISSN = {0010-3616,1432-0916},
   MRCLASS = {35Q35 (35B40 35L60 76N10)},
  MRNUMBER = {1671920},
MRREVIEWER = {Ivan\ Stra\v skraba},
       DOI = {10.1007/s002200050526},
       URL = {https://doi.org/10.1007/s002200050526},
}

@article {MR971685,
    AUTHOR = {Stra\v{s}kraba, Ivan and Valli, Alberto},
     TITLE = {Asymptotic behaviour of the density for one-dimensional
              {N}avier-{S}tokes equations},
   JOURNAL = {Manuscripta Math.},
  FJOURNAL = {Manuscripta Mathematica},
    VOLUME = {62},
      YEAR = {1988},
    NUMBER = {4},
     PAGES = {401--416},
      ISSN = {0025-2611,1432-1785},
   MRCLASS = {35B40 (35Q10 76N10)},
  MRNUMBER = {971685},
       DOI = {10.1007/BF01357718},
       URL = {https://doi.org/10.1007/BF01357718},
}

@article {MR1370096,
    AUTHOR = {Matsumura, Akitaka and Yanagi, Shigenori},
     TITLE = {Uniform boundedness of the solutions for a one-dimensional
              isentropic model system of compressible viscous gas},
   JOURNAL = {Comm. Math. Phys.},
  FJOURNAL = {Communications in Mathematical Physics},
    VOLUME = {175},
      YEAR = {1996},
    NUMBER = {2},
     PAGES = {259--274},
      ISSN = {0010-3616,1432-0916},
   MRCLASS = {35Q35 (35L60 76N15)},
  MRNUMBER = {1370096},
       URL = {http://projecteuclid.org/euclid.cmp/1104275924},
}

@article {MR1890882,
    AUTHOR = {Stra\v{s}kraba, Ivan and Zlotnik, Alexander},
     TITLE = {On a decay rate for 1{D}-viscous compressible barotropic fluid
              equations},
   JOURNAL = {J. Evol. Equ.},
  FJOURNAL = {Journal of Evolution Equations},
    VOLUME = {2},
      YEAR = {2002},
    NUMBER = {1},
     PAGES = {69--96},
      ISSN = {1424-3199,1424-3202},
   MRCLASS = {76N10 (35Q35)},
  MRNUMBER = {1890882},
MRREVIEWER = {Kevin\ R.\ Zumbrun},
       DOI = {10.1007/s00028-002-8080-3},
       URL = {https://doi.org/10.1007/s00028-002-8080-3},
}

@article {MR2812966,
    AUTHOR = {Zhu, Changjiang and Zi, Ruizhao},
     TITLE = {Asymptotic behavior of solutions to 1{D} compressible
              {N}avier-{S}tokes equations with gravity and vacuum},
   JOURNAL = {Discrete Contin. Dyn. Syst.},
  FJOURNAL = {Discrete and Continuous Dynamical Systems},
    VOLUME = {30},
      YEAR = {2011},
    NUMBER = {4},
     PAGES = {1263--1283},
      ISSN = {1078-0947,1553-5231},
   MRCLASS = {35Q35 (35B40 35B45 76N99)},
  MRNUMBER = {2812966},
MRREVIEWER = {S\'ebastien\ J.\ Boyaval},
       DOI = {10.3934/dcds.2011.30.1263},
       URL = {https://doi.org/10.3934/dcds.2011.30.1263},
}

@article {MR4659290,
    AUTHOR = {Chen, Ke and Ha, Ly Kim and Hu, Ruilin and Nguyen, Quoc-Hung},
     TITLE = {Global well-posedness of the 1d compressible {N}avier-{S}tokes
              system with rough data},
   JOURNAL = {J. Math. Pures Appl. (9)},
  FJOURNAL = {Journal de Math\'ematiques Pures et Appliqu\'ees. Neuvi\`eme
              S\'erie},
    VOLUME = {179},
      YEAR = {2023},
     PAGES = {425--453},
      ISSN = {0021-7824,1776-3371},
   MRCLASS = {76N06 (35A01 35D30)},
  MRNUMBER = {4659290},
MRREVIEWER = {Ond\v rej\ Kreml},
       DOI = {10.1016/j.matpur.2023.09.012},
       URL = {https://doi.org/10.1016/j.matpur.2023.09.012},
}

@article{liang2026,
      title={Global regularity and sharp decay to the 2{D} Hypo-Viscous compressible {N}avier-{S}tokes equations}, 
      author={Chen Liang and Zhaonan Luo and Zhaoyang Yin},
      year={2026},
      JOURNAL ={arXiv:2601.06889},
}

@book {MR3185174,
    AUTHOR = {Sato, Keniti},
     TITLE = {L\'evy processes and infinitely divisible distributions},
    SERIES = {Cambridge Studies in Advanced Mathematics},
    VOLUME = {68},
   EDITION = {Revised},
      NOTE = {Translated from the 1990 Japanese original},
 PUBLISHER = {Cambridge University Press, Cambridge},
      YEAR = {2013},
     PAGES = {xiv+521},
      ISBN = {978-1-107-65649-9},
   MRCLASS = {60G51 (60E07 60G18 60G52 60J45)},
  MRNUMBER = {3185174},
}

@article {MR1031939,
    AUTHOR = {Rosenau, Philip},
     TITLE = {Extending hydrodynamics via the regularization of the
              {C}hapman-{E}nskog expansion},
   JOURNAL = {Phys. Rev. A (3)},
  FJOURNAL = {Physical Review. A. Third Series},
    VOLUME = {40},
      YEAR = {1989},
    NUMBER = {12},
     PAGES = {7193--7196},
      ISSN = {1050-2947,1094-1622},
   MRCLASS = {82C40 (76P05)},
  MRNUMBER = {1031939},
MRREVIEWER = {Carlo\ Cercignani},
       DOI = {10.1103/PhysRevA.40.7193},
       URL = {https://doi.org/10.1103/PhysRevA.40.7193},
}

@article {MR2737788,
    AUTHOR = {de Pablo, Arturo and Quir\'os, Fernando and Rodr\'iguez, Ana
              and V\'azquez, Juan Luis},
     TITLE = {A fractional porous medium equation},
   JOURNAL = {Adv. Math.},
  FJOURNAL = {Advances in Mathematics},
    VOLUME = {226},
      YEAR = {2011},
    NUMBER = {2},
     PAGES = {1378--1409},
      ISSN = {0001-8708,1090-2082},
   MRCLASS = {35R11 (35A01 35A02 35B65 35K15 35K57 76S05)},
  MRNUMBER = {2737788},
MRREVIEWER = {Emil\ Popescu},
       DOI = {10.1016/j.aim.2010.07.017},
       URL = {https://doi.org/10.1016/j.aim.2010.07.017},
}

@article {MR2954615,
    AUTHOR = {de Pablo, Arturo and Quir\'os, Fernando and Rodr\'iguez, Ana
              and V\'azquez, Juan Luis},
     TITLE = {A general fractional porous medium equation},
   JOURNAL = {Comm. Pure Appl. Math.},
  FJOURNAL = {Communications on Pure and Applied Mathematics},
    VOLUME = {65},
      YEAR = {2012},
    NUMBER = {9},
     PAGES = {1242--1284},
      ISSN = {0010-3640,1097-0312},
   MRCLASS = {35R11 (35A01 35A02 35B65 35D35 35K57)},
  MRNUMBER = {2954615},
MRREVIEWER = {Erwin\ Topp},
       DOI = {10.1002/cpa.21408},
       URL = {https://doi.org/10.1002/cpa.21408},
}

@article {MR283426,
    AUTHOR = {Itaya, Nobutoshi},
     TITLE = {On the {C}auchy problem for the system of fundamental
              equations describing the movement of compressible viscous
              fluid},
   JOURNAL = {Kodai Math. Sem. Rep.},
  FJOURNAL = {Kodai Mathematical Seminar Reports},
    VOLUME = {23},
      YEAR = {1971},
     PAGES = {60--120},
      ISSN = {0023-2599},
   MRCLASS = {35.79 (76.00)},
  MRNUMBER = {283426},
MRREVIEWER = {Tosio\ Kato},
       URL = {http://projecteuclid.org/euclid.kmj/1138846265},
}

@article {MR421305,
    AUTHOR = {Itaya, Nobutoshi},
     TITLE = {On the initial value problem of the motion of compressible
              viscous fluid, especially on the problem of uniqueness},
   JOURNAL = {J. Math. Kyoto Univ.},
  FJOURNAL = {Journal of Mathematics of Kyoto University},
    VOLUME = {16},
      YEAR = {1976},
    NUMBER = {2},
     PAGES = {413--427},
      ISSN = {0023-608X},
   MRCLASS = {76.35},
  MRNUMBER = {421305},
MRREVIEWER = {W.\ Langlois},
       DOI = {10.1215/kjm/1250522922},
       URL = {https://doi.org/10.1215/kjm/1250522922},
}

@article {MR4491875,
    AUTHOR = {Li, Jinkai and Xin, Zhouping},
     TITLE = {Entropy-bounded solutions to the one-dimensional heat
              conductive compressible {N}avier-{S}tokes equations with far
              field vacuum},
   JOURNAL = {Comm. Pure Appl. Math.},
  FJOURNAL = {Communications on Pure and Applied Mathematics},
    VOLUME = {75},
      YEAR = {2022},
    NUMBER = {11},
     PAGES = {2393--2445},
      ISSN = {0010-3640,1097-0312},
   MRCLASS = {35Q30},
  MRNUMBER = {4491875},
MRREVIEWER = {Jean\ C.\ Cortissoz},
       DOI = {10.1002/cpa.22015},
       URL = {https://doi.org/10.1002/cpa.22015},
}

@article {MR4039142,
    AUTHOR = {Li, Jinkai and Xin, Zhouping},
     TITLE = {Entropy bounded solutions to the one-dimensional compressible
              {N}avier-{S}tokes equations with zero heat conduction and far
              field vacuum},
   JOURNAL = {Adv. Math.},
  FJOURNAL = {Advances in Mathematics},
    VOLUME = {361},
      YEAR = {2020},
     PAGES = {106923, 50},
      ISSN = {0001-8708,1090-2082},
   MRCLASS = {35Q35 (76N10)},
  MRNUMBER = {4039142},
MRREVIEWER = {Luc\ Paquet},
       DOI = {10.1016/j.aim.2019.106923},
       URL = {https://doi.org/10.1016/j.aim.2019.106923},
}

@article {MR2368905,
    AUTHOR = {Mellet, A. and Vasseur, A.},
     TITLE = {Existence and uniqueness of global strong solutions for
              one-dimensional compressible {N}avier-{S}tokes equations},
   JOURNAL = {SIAM J. Math. Anal.},
  FJOURNAL = {SIAM Journal on Mathematical Analysis},
    VOLUME = {39},
      YEAR = {2007/08},
    NUMBER = {4},
     PAGES = {1344--1365},
      ISSN = {0036-1410,1095-7154},
   MRCLASS = {76N10 (35Q30)},
  MRNUMBER = {2368905},
MRREVIEWER = {Vladimir\ V.\ Shelukhin},
       DOI = {10.1137/060658199},
       URL = {https://doi.org/10.1137/060658199},
}



\vspace{5ex}

(Chen Liang)\par\nopagebreak
\noindent\textsc{School of Science,
	Shenzhen Campus of Sun Yat-sen University, Shenzhen 518107, China}

Email address: {\texttt{liangch89@mail2.sysu.edu.cn}}

\vspace{3ex}

(Zhaonan Luo)\par\nopagebreak
\noindent\textsc{School of Science,
	Shenzhen Campus of Sun Yat-sen University, Shenzhen 518107, China}

Email address: {\texttt{luozhn7@mail.sysu.edu.cn}}

\vspace{3ex}

(Zhaoyang Yin)\par\nopagebreak
\noindent\textsc{School of Science,
	Shenzhen Campus of Sun Yat-sen University, Shenzhen 518107, China}

Email address: {\texttt{mcsyzy@mail.sysu.edu.cn}}

\end{document}